\title{Finitary codings and stochastic domination for Poisson representable processes}
\author{Yinon Spinka\thanks{Tel Aviv University. Research supported in part by ISF grant 1361/22}}
\date{June 2025}
  \crefname{theorem}{Theorem}{Theorems}
  \crefname{thm}{Theorem}{Theorems}
  \crefname{lemma}{Lemma}{Lemmas}
  \crefname{lem}{Lemma}{Lemmas}
  \crefname{remark}{Remark}{Remarks}
  \crefname{prop}{Proposition}{Propositions}
  \crefname{proposition}{Proposition}{Propositions}
  \crefname{notation}{Notation}{Notations}
  \crefname{claim}{Claim}{Claims}
  \crefname{observation}{Observation}{Observations}
  \crefname{defn}{Definition}{Definitions}
  \crefname{corollary}{Corollary}{Corollaries}
  \crefname{section}{Section}{Sections}
  \crefname{figure}{Figure}{Figures}
  \crefname{exercise}{Exercise}{Exercises}
  \crefname{example}{Example}{Examples}
  \crefname{assumption}{Assumption}{Assumptions}
\newtheorem{thm}{Theorem}[section]
\newtheorem{claim}[thm]{Claim}
\newtheorem{lemma}[thm]{Lemma}
\newtheorem{corollary}[thm]{Corollary}
\newtheorem{example}[thm]{Example}
\numberwithin{equation}{section}
\theoremstyle{definition}
\newtheorem{remark}[thm]{Remark}
\def\cZ{\mathcal{Z}}
\def\cX{\mathcal{X}}
\def\cP{\mathcal{P}}
\def\cM{\mathcal{M}}
\def\cJ{\mathcal{J}}
\def\cI{\mathcal{I}}
\def\cF{\mathcal{F}}
\def\cC{\mathcal{C}}
\def\cB{\mathcal{B}}
\def\cA{\mathcal{A}}
\def\P{\mathbb{P}}
\def\E{\mathbb{E}}
\def\Z{\mathbb{Z}}
\newcommand{\1}{\mathbf{1}}
\def\eps{\varepsilon}
\DeclareMathOperator{\dist}{dist}
\DeclareMathOperator{\diam}{diam}
\begin{document}
\maketitle

\begin{abstract}
Construct a random set by independently selecting each finite subset of the integers with some probability depending on the set up to translations and taking the union of the selected sets.
We show that when the only sets selected with positive probability are pairs, such a random set is a finitary factor of an IID process, answering a question of Forsstr{\"o}m, Gantert and Steif~\cite{forsstrom2024poisson}. More generally, we show that this is the case whenever the distribution induced by the size of the selected sets has sufficient exponential moments, and that the existence of some exponential moment is necessary. We further show that such a random set is stochastically dominated by a non-trivial Bernoulli percolation if and only if there is a finite exponential moment, thereby partially answering another question from~\cite{forsstrom2024poisson}. We also give a partial answer to a third question from~\cite{forsstrom2024poisson} regarding a form of phase transition. These results also hold on $\Z^d$ with $d \ge 2$.
In the one-dimensional case, under the condition that the distribution induced by the diameter of the selected sets has an exponential moment, we further show that such a random set is finitarily isomorphic to an IID process.
\end{abstract}

\section{Introduction}

Let $X=(X_A)_{A \Subset V}$ be independent Bernoulli random variables, where $A$ ranges over the finite (non-empty) subsets of a countable set $V$.
Define
\[ \bar X := \bigcup \{ A \Subset V : X_A = 1 \} .\]
Then $\bar X$ is a random subset of $V$, obtained by independently including each finite set $A$ with some probability, and then taking the union of all the included sets. We regard $\bar X$ as a random element of $\{0,1\}^V$, or equivalently, as a $\{0,1\}$-valued process $(\bar X_v)_{v \in V}$ on $V$. Such a process is an example of what has been called a Poisson representable process, recently introduced by Forsstr{\"o}m, Gantert and Steif in~\cite{forsstrom2024poisson} (our setting corresponds to the generic situation where the intensity measure concentrates on finite sets).
In the present paper, we investigate its stochastic domination and finitary coding properties.
Throughout the paper we denote $p_A := \P(X_A=1)$, and assume that $p_A<1$ for all $A \Subset V$ and that $\sum_{A \Subset V: v\in A} p_A < \infty$ for all $v \in V$. These natural assumptions are equivalent to $\P(\bar X_v=1)<1$ for all $v \in V$.

Suppose now that the underlying space is $V = \Z^d$ for some $d \ge 1$, and that the probabilities $(p_A)$ are invariant to translations, i.e., $p_{A+v}=p_A$ for all $A \Subset \Z^d$ and $v \in \Z^d$. Clearly, the process $\bar X$ is now invariant to translations. It is also easy to see that $\bar X$ is a factor of an IID process. The main question of interest in this paper, raised by Forsstr{\"o}m, Gantert and Steif in the special case when $p_A=0$ unless $|A|=2$, is whether $\bar X$ is a \emph{finitary} factor of an IID process (see~\cite[Question 7]{forsstrom2024poisson}). Our main result gives a positive answer to this.

\begin{thm}\label{thm:ffiid-under-all-exp-moment}
    Let $V=\Z^d$, $d \ge 1$. Suppose that the probabilities $(p_A)$ are translation invariant and that
 \begin{equation}\label{eq:exp-moment-all}
  \sum_{A \Subset \Z^d : 0 \in A} p_A e^{\lambda |A|} < \infty \qquad\text{for all }\lambda>0 .
 \end{equation}
    Then $\bar X$ is a finitary factor of an IID process.
\end{thm}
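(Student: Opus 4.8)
The plan is to build an explicit finitary factor map through a multi-scale scheme, using \eqref{eq:exp-moment-all} to guarantee that the scheme terminates.

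\emph{Decomposition into scales.} Fix a rapidly increasing sequence $0=L_0<L_1<L_2<\cdots$ and write $\bar X=\bigcup_{k\ge 1}\bar X^{(k)}$, where $\bar X^{(k)}$ is the union of the selected sets $A$ (those with $X_A=1$) whose $\ell^\infty$-diameter lies in $[L_{k-1},L_k)$. The processes $\bar X^{(1)},\bar X^{(2)},\ldots$ are independent, being built from disjoint families of the $X_A$. For each fixed $k$, $\bar X^{(k)}$ is a finitary factor of an IID process with \emph{deterministic} coding radius at most $L_k$: if the coordinate at a vertex $v$ encodes the (a.s.\ finite) collection of selected sets $A$ with $\min A=v$ in lexicographic order, then every selected set of $\diam<L_k$ containing $v$ is anchored inside $B_\infty(v,L_k)$, so $\bar X^{(k)}_v$ is determined by the coordinates in $B_\infty(v,L_k)$. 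Finally, translation invariance and \eqref{eq:exp-moment-all} give, for every $\lambda>0$,
\[ \P\bigl(\bar X^{(k)}_0=1\bigr)\ \le\ \sum_{A\Subset\Z^d:\,0\in A,\ \diam A\ge L_{k-1}} p_A\ \le\ C_\lambda\, e^{-\lambda L_{k-1}}, \]
so the ``activity of scale $k$'' decays faster than any exponential in $L_k$; choosing the $L_k$ to grow fast enough, this beats all the polynomial volume factors that will arise.

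\emph{Why combining the scales is the issue.} It remains to show that the infinite union $\bigcup_k\bar X^{(k)}$ of independent, ever-sparser finitary factors is itself a finitary factor. This is not automatic: the naive map $\omega\mapsto\bigcup_k\bar X^{(k)}(\omega)$ is discontinuous precisely on the positive-probability event $\{0\notin\bar X\}$, because no finite observation can certify the absence of \emph{all} higher-scale contributions at the origin. (A Borel--Cantelli argument does show that $\bar X_0$ a.s.\ depends only on finitely many of the coordinates above, but that random window is not a \emph{stopping} window.) The standard remedy is to replace the naive map by one with the same law that \emph{is} a.s.\ continuous, via a coupling-from-the-past / explore-and-resample construction: one processes the scales from coarse to fine, producing for each $N$ a candidate configuration on a window around the origin from the scales $k\le N$ together with a worst-case guess for the influence of the scales $k>N$, and lets $N\to\infty$. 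Using independence across scales, the monotonicity of $\bar X$ (it is an increasing function of the independent $X_A$, hence its law is positively associated, and one may appeal to the stochastic domination results of the paper — under \eqref{eq:exp-moment-all} even by a Bernoulli percolation of arbitrarily small density), and the super-exponential decay above, one argues that these candidates stabilize at the origin after an a.s.\ finite number of scales and, crucially, that this stabilization is \emph{detectable} from a finite random window; intersecting the resulting stopping windows over all vertices yields a genuine finitary factor with a.s.\ finite coding radius.

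\emph{The main obstacle.} The heart of the matter — and the step I expect to require the real work — is arranging that the coarse-to-fine procedure is self-terminating: one must show quantitatively that once the scales up to some (a.s.\ finite) level are ``quiet'' near a vertex, the combined contribution of \emph{all} finer scales is already forced, so that the procedure may correctly stop and declare the value there. Equivalently, in the explore-and-resample picture, the occasional need to look far away — to rule out a rare large selected set reaching the explored region — must trigger a local resampling whose cascade dies out almost surely. It is precisely here that the full strength of \eqref{eq:exp-moment-all} (all exponential moments, not merely one) is used, to make the probability of having to descend to a further scale summable with enough slack to run the termination (Borel--Cantelli / sprinkling) argument uniformly over all vertices.
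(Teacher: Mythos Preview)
Your decomposition into diameter scales and your diagnosis of the difficulty are both correct: the naive superposition of the block factors $\bar X^{(k)}$ is not finitary, and a Borel--Cantelli bound on $\P(\bar X^{(k)}_0=1)$ does not by itself produce a stopping window. But from that point on the proposal is only a plan, and the plan as written does not contain the idea that actually closes the gap. Your ``coupling-from-the-past / explore-and-resample'' paragraph is too vague to constitute an argument: you never say what the ``worst-case guess for the influence of the scales $k>N$'' is, nor how one would \emph{certify} stabilization from a finite window --- which is exactly the obstacle you correctly identified two paragraphs earlier. (A minor side issue: your displayed bound $\P(\bar X^{(k)}_0=1)\le C_\lambda e^{-\lambda L_{k-1}}$ does not follow from~\eqref{eq:exp-moment-all}, since the moment there is in $|A|$, not in $\diam A$; the paper is careful to use only $|A|$-moments of the diameter-tail.)

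The missing mechanism, which the paper supplies via its stochastic-domination lemma, is this: do not code $\bar X^{(k)}$ merely as a block factor of radius $L_k$, but rather as a finitary factor whose coding radius is \emph{zero with probability at least $1-\epsilon_k$} for a summable sequence $(\epsilon_k)$. Concretely, the paper shows (\cref{lem:domination2}) that for each scale $k$ the process $(\max\{X_A: A\in\cA_k,\,v\in A\})_v$ is stochastically dominated by an IID Bernoulli process $W^{(k)}$ of parameter $\epsilon_k$ (this is where the exponential moment in $|A|$ is used, with $\lambda=\log k^2$). Passing to a monotone coupling, one reads $W^{(k)}_v$ first: if $W^{(k)}_v=0$ then $\bar X^{(k)}_v=0$ is forced and the coding radius is $0$; only on the event $\{W^{(k)}_v=1\}$, of probability $\epsilon_k$, does one fall back on the block map and incur radius $L_k$. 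Now the events $\{W^{(k)}_0=1\}$ are independent with summable probabilities, so Borel--Cantelli gives that only finitely many scales have positive coding radius at the origin --- and, crucially, this \emph{is} detectable, because the $W^{(k)}_0$ are themselves coordinates of the source IID process. This is the step your proposal is missing; the rest (a finitary partition into bounded pieces to organise the couplings equivariantly) is bookkeeping.
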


In the converse direction, we prove the following.
\begin{thm}\label{thm:ffiid-implies-some-exp-moment}
    Let $V=\Z^d$, $d \ge 1$. Suppose that the probabilities $(p_A)$ are translation invariant and that $\bar X$ is a finitary factor of an IID process.
    Then
 \begin{equation}\label{eq:exp-moment}
  \sum_{A \Subset \Z^d : 0 \in A} p_A e^{\lambda |A|} < \infty \qquad\text{for some }\lambda>0 .
 \end{equation}
\end{thm}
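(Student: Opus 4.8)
The plan is to argue by contrapositive: assuming $\bar X$ is a finitary factor of an IID process, I would derive~\eqref{eq:exp-moment}. Fix a finitary coding $\bar X = \varphi(U)$ with $U$ an IID field on $\Z^d$, and let $R$ denote the coding radius at the origin, so $R<\infty$ almost surely and $\bar X_v$ depends only on $U$ restricted to the ball $B(v,R_v)$, where $R_v$ has the law of $R$ by translation invariance. Write $\psi(r):=\P(R\ge r)$, so that $\psi(r)\to 0$ as $r\to\infty$ — but note we are given no rate of decay, which is the source of the main difficulty below.

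The easy half of the argument is a lower bound. For any finite $S$ with $p_S>0$, the event $\{X_S=1\}$ forces $\bar X_v=1$ for every $v\in S$, so
\[
  \P\big(\bar X_v=1\ \text{for all }v\in S\big)\ \ge\ p_S ,
\]
and more generally, by independence of the $X_B$'s, the left-hand side is at least $1-\prod_{B\supseteq S}(1-p_B)\ge (1-e^{-1})\min\big(1,\textstyle\sum_{B\supseteq S}p_B\big)$ (in particular $\bar X$ has positive associations, being an increasing function of independent variables). The goal is therefore to establish a matching \emph{upper} bound of the form $\P(\bar X\equiv 1\text{ on }S)\le e^{-c|S|}$ for the sets responsible for a divergent exponential sum: combining the two bounds and summing over all $A\ni 0$ with $|A|=n$ would give $\sum_{A\ni 0,\,|A|=n}p_A\le e^{-cn}$ for all large $n$, and summing the resulting geometric series over $n$ yields~\eqref{eq:exp-moment} with any $\lambda<c$.

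For the upper bound I would use a decoupling argument driven by the coding radius. If $S=S_1\cup\dots\cup S_m$ is a partition into pieces that are pairwise at distance greater than $2r$, then on the event $\{R_v<r\text{ for all }v\in S\}$ the restrictions $(\bar X|_{S_i})_i$ are measurable with respect to $U$ on the disjoint sets $\bigcup_{v\in S_i}B(v,r)$, hence independent; since each piece is all-ones with probability at most $\P(\bar X_0=1)<1$, one gets
\[
  \P\big(\bar X\equiv 1\text{ on }S\big)\ \le\ \P(\bar X_0=1)^{\,m}+|S|\,\psi(r).
\]
For "spread out" sets $S$ one may take $m\asymp|S|$ with $r$ large, and this is already exponentially small in $|S|$; thus such sets contribute negligibly to the exponential sum. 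The main obstacle is the opposite, "clumped" regime — boxes, balls, and the like — where any $2r$-separated partition has only $m\asymp|S|/r^d$ pieces, so the naive choice of scale is defeated precisely because $\psi$ carries no quantitative decay. I expect the resolution to require choosing the scale $r=r(n)$ adaptively and exploiting that the absence of exponential moments furnishes a \emph{whole range} of set sizes (rather than a single $S$), combined with the geometric input that there are at most $\binom{|B(0,D)|}{n}$ sets of size $n$ and diameter $D$ through the origin, so that the decoupling inequality can be summed effectively over shapes; a multiscale/bootstrap implementation of the inequality above, and possibly a separate and more hands-on treatment of the one-dimensional case, seem likely to be needed to push this through in full generality and for all $d\ge 1$. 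This balancing of the coding-radius tail against the heaviness of the size distribution is, I believe, the crux of the proof.
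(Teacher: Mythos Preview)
Your proposal has a genuine gap at the ``summing'' step, and the overall strategy diverges from the paper's in a way that matters.

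On the summing step: even if you grant the upper bound $\P(\bar X\equiv 1\text{ on }A)\le e^{-c|A|}$ for every finite $A$, combining it with $p_A\le\P(\bar X\equiv 1\text{ on }A)$ only yields $p_A\le e^{-cn}$ for each \emph{individual} $A$ of size $n$. Summing over all $A\ni 0$ with $|A|=n$ then gives $\sum_{A\ni 0,\,|A|=n}p_A\le |\{A\ni 0:|A|=n\}|\cdot e^{-cn}$, and the counting factor is infinite. Your later remark about bounding the number of sets of given size and diameter does not rescue this: sets of size $n$ may have arbitrarily large diameter, and~\eqref{eq:exp-moment} imposes no diameter restriction. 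So the pointwise lower bound $\P(\bar X_A\equiv 1)\ge p_A$ is simply too weak to control $p_n:=\sum_{A\ni 0,\,|A|=n}p_A$.

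The paper's argument is organized differently. It does not try to bound $p_A$ set by set; instead it fixes the test set to be the box $[N]^d$ and proves, via a random covering construction (this is the content of~\eqref{eq:dom-D}--\eqref{eq:streak-lower-bound}, developed for the stochastic domination theorem), that when \emph{all} exponential moments diverge one has the subexponential lower bound $\P(\bar X_{[N]^d}\equiv 1)\ge e^{-o(N^d)}$. Crucially, this lower bound is expressed directly in terms of the aggregate $p_n$, not individual $p_A$'s, which is exactly what your approach lacks. The matching upper bound $\P(\bar X_{[N]^d}\equiv 1)\le e^{-\Omega(N^d)}$ is then taken as a black box: finitary factors of IID satisfy the mean ergodic theorem with an exponential rate~\cite{bosco2010exponential}. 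Your decoupling inequality is a first move toward proving that black box, and your instinct that a multiscale argument is needed there is correct---but the paper simply quotes it, and invests all the effort in the covering lower bound, which is the idea your plan is missing.
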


We thus have a gap between the sufficient condition given by \cref{thm:ffiid-under-all-exp-moment} and the necessary condition given by \cref{thm:ffiid-implies-some-exp-moment}. It turns out that neither condition is both necessary and sufficient.
That a single exponential moment is not sufficient is demonstrated in \cref{ex:single-exp-moment-not-sufficient} below.
On the other hand, the next result shows that not all exponential moments are needed when $p_{\{0\}}>0$, and it suffices that the exponential moment corresponding to $\lambda = -\log p_{\{0\}}$ exists.

\begin{thm}\label{thm:ffiid-with-large-p1}
    Let $V=\Z^d$, $d \ge 1$. Suppose that the probabilities $(p_A)$ are translation invariant and that
 \begin{equation}\label{eq:exp-moment-p1}
  p_{\{0\}}>0 \qquad\text{and}\qquad \sum_{A \Subset \Z^d : 0 \in A} p_A \cdot p_{\{0\}}^{-|A|} < \infty .
 \end{equation}
    Then $\bar X$ is a finitary factor of an IID process.
\end{thm}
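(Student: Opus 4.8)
The plan is to re-run the finitary coding behind Theorem~\ref{thm:ffiid-under-all-exp-moment}, using the positive singleton probability $q:=p_{\{0\}}$ to buy the extra room that is needed when only the single exponential moment at $\lambda=-\log q$ is available. Write $\bar X=Y\cup Z$, where $Y=(X_{\{v\}})_{v\in\Z^d}$ is the i.i.d.\ Bernoulli($q$) field of singletons and $Z=\bigcup\{A:|A|\ge 2,\ X_A=1\}$ is independent of $Y$. Since $Y$ is itself an i.i.d.\ process it suffices to code the pair $(Y,Z)$ jointly, with its correct (independent) law, in such a way that while determining $\bar X_v$ at a site with $Y_v=0$ one may treat every site $u$ discovered to have $Y_u=1$ as terminal — it already certifies $\bar X_u=1$ — and prune the exploration there. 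One should also reduce to the case where every non-singleton $A$ satisfies $p_A\le q^{|A|-1}$: the sets violating this are, by \eqref{eq:exp-moment-p1}, finitely many up to translation and of bounded size, so they form a finite-range Poisson representable process, which is a finitary factor of an i.i.d.\ process (by Theorem~\ref{thm:ffiid-under-all-exp-moment}, or directly) and may be unioned in at the end.

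With that reduction in place, I would encode $Z$ by giving each non-singleton $A$ of size $k$ its own independent randomness: a master coin $M_A\sim\mathrm{Bern}(p_Aq^{-(k-1)})$ at a translation-equivariantly chosen root $r(A)\in A$ (e.g.\ the lexicographic minimum) and confirmation coins $C_{A,u}\sim\mathrm{Bern}(q)$ at the remaining sites $u\in A\setminus\{r(A)\}$, declaring $A$ active iff $M_A=1$ and all $C_{A,u}=1$; then $\P(A\text{ active})=p_A$, the active-set indicators are genuinely independent, and the law of $Z$ is exactly right. The gain is that the master coins at a given site are summable, $\sum_{A:r(A)=0}p_Aq^{-(|A|-1)}=q\sum_{A:r(A)=0}p_Aq^{-|A|}<\infty$ by \eqref{eq:exp-moment-p1}, so only finitely many candidate active sets are ``anchored'' at any given site. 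The determination of $\bar X_v$ then runs, as in the proof of Theorem~\ref{thm:ffiid-under-all-exp-moment}, as a scale-by-scale exploration: at scale $n$ one tests the candidate active sets of size up to $k_n\uparrow\infty$ that are anchored at or reach the current frontier, recurses to their other sites to read the confirmation coins and to look for further anchors, and — this is the one new ingredient — prunes the exploration at every site found to lie in $Y$. One then needs to check translation equivariance and that exposing the revealed portion of $Y$ does not bias the conditional law of the unexplored part of $Z$, which is immediate from $Y\perp Z$ and the set-by-set independence.

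The crux is showing that this exploration terminates almost surely, so that $\bar X_0$ depends on the i.i.d.\ input only through an almost surely finite window. The exploration must be dominated by a subcritical multitype branching process, and the bookkeeping has to be arranged so that its reproduction is controlled by the series $\sum_{A\ni 0}p_Aq^{-|A|}$: the factor $q^{-|A|}$ should arise jointly from the $q$-weighted master coins (which is what makes far anchors findable and rare) and from the $Y$-pruning (each site of a set is un-pruned only with probability $1-q$, so the exploration only ever branches from a sub-Bernoulli fraction of sites), exactly at the strength permitted by \eqref{eq:exp-moment-p1}. The same large-deviation control on the size of the explored region as in the proof of Theorem~\ref{thm:ffiid-under-all-exp-moment} is then needed, but now with no room to spare, so the estimates must be pushed to match the hypothesis precisely; I expect this matching, together with correctly interleaving the pruning with the scales, to be the delicate part of the argument.
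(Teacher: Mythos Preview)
Your approach has a genuine gap: the ``prune at $Y$'' step does not do what you need it to do. When $Y_0=0$ you must determine $Z_0=\max\{X_A:0\in A,\,|A|\ge 2\}$, and since $Y\perp (X_A)_{|A|\ge 2}$, the values $Y_u$ at other sites carry no information whatsoever about which sets through $0$ are active. Knowing that $\bar X_u=1$ because $Y_u=1$ tells you nothing about whether a particular $A\ni 0$ with $u\in A$ has $X_A=1$. Concretely, in your master/confirmation encoding, for each of the infinitely many $A\ni 0$ with $r(A)\neq 0$ there is an independent confirmation coin $C_{A,0}\sim\mathrm{Bern}(q)$ sitting at $0$; infinitely many of these are $1$, and for each you must still go read the master coin at $r(A)$, so the exploration does not terminate. (If instead you try to identify the confirmation coins with $Y$ itself, the $X_A$'s become dependent and the law of $\bar X$ is wrong.) More tellingly, any scheme that succeeds by first determining the family $\{A\ni 0:X_A=1\}$ would in fact be coding the richer process $\hat X$; the paper explicitly points out that $\hat X$ is \emph{not} FFIID in general under the hypothesis of this theorem, so no variant of this splitting can work.

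The paper's proof is structurally different. It introduces a third symbol $*$ (``undetermined'') with the partial order $0\prec *$, $1\prec *$, and constructs for each finite $L$ a $\{0,1,*\}^L$-valued variable $Z^L$ such that $\bar X\preceq_{\mathrm{st}}Z^M\preceq_{\mathrm{st}}Z^L$ for $L\subset M$, with independence across disjoint domains and $\P(Z^L_0=*)\to 0$ as $L\uparrow\Z^d$. The hypothesis~\eqref{eq:exp-moment-p1} is used exactly once, to bound the conditional probability that a set $A\not\subset L$ forces a $1$ at a site $v$, and the factor $p_{\{0\}}^{-|A|}$ enters because, conditionally on everything else, the event $X_A=1$ must compete against the event that each site of $A$ was already covered by its singleton. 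The finitary coding then refines these approximations along a finitary hyperfinite exhaustion. The key conceptual point you are missing is that the singleton layer is not used to \emph{prune an exploration of which sets are active}, but to guarantee that the \emph{value} $\bar X_v$ can be pinned down (to $0$ or $1$) on a finite window with high probability, without ever resolving which $A$'s are responsible.
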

Let us mention that this result implies that for some choices of $(p_A)$ there is a non-trivial phase transition as one varies the probability of singletons (i.e., varying the common value $p_{\{v\}}$, $v \in V$, while keeping all other $p_A$ fixed), whereby $\bar X$ is a finitary factor of IID for values above some threshold, but not below.
This shows that being a finitary factor of IID is not determined merely by the ``tail behavior'' of the $p_A$, but rather by some interplay between the values of $p_A$ for ``small sets'' (or at least the singletons) and ``large sets''. This has the consequence of a certain phase transition (as the underlying intensity measure is scaled) for the property of being a finitary factor of IID, shedding light on another question from~\cite{forsstrom2024poisson}.
Let us also mention that while the assumptions of \cref{thm:ffiid-under-all-exp-moment,thm:ffiid-with-large-p1} are somewhat similar, the assumption of the first theorem actually implies a stronger result (in which a richer process is a finitary factor of IID), which, in general, is not true under the assumption of the latter theorem.
See the discussion for further details on these issues.


We now present a result providing a finitary coding with exponential tail for the coding radius.
This requires an assumption beyond the existence of an exponential moment for the size of $A$.
One possibility is to only allow connected sets $A$.
A different possibility is to control the tail of the diameter of $A$.
We formulate our condition in a way that captures both possibilities, by taking into account the ``connected size'' of $A$.
For a set $A$, let $|{\sf c}(A)|$ be the minimal size of a connected set containing $A$.

\begin{thm}\label{thm:ffiid-connected}
    Suppose that $V$ is the vertex set of an infinite connected transitive graph $G$ of degree $\Delta \ge 2$. Let $\Gamma$ be a closed transitive group of automorphisms of $G$.
    Suppose that the probabilities $(p_A)$ are $\Gamma$-invariant and
    \begin{equation}\label{eq:exp-moment-cA}
     \sum_{A \Subset V : v \in A} p_A e^{\lambda |{\sf c}(A)|} < \infty \qquad\text{for some }\lambda>\lambda_* := \log(3\Delta-1) \text{ and } v \in V .
    \end{equation}
    Then $\bar X$ is a finitary $\Gamma$-factor of an IID process with a coding radius having exponential tail. Moreover, in the case when $V=G=\Gamma=\Z$, the result holds with $\lambda_*=0$.
\end{thm}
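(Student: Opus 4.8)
The plan is to construct the coding explicitly from a Poisson process representation and then show that a ``domination-by-product-field'' argument, together with a Peierls-type estimate, allows local decoding with an exponential coding radius. First I would realize $\bar X$ as follows: at each vertex $v$ place an independent Poisson clock assigning to $v$ a (random, locally finite) collection of ``marks'', each mark being a finite set $A \ni v$ appearing with intensity $p_A$ (restricting to a representative of each translation class to avoid overcounting, or equivalently working with the intensity measure on sets). The IID source will supply, at each site $v$, enough randomness to generate all the sets $A$ ``anchored'' at $v$ (say, anchored at the lexicographically minimal vertex of $A$), and $\bar X_v = 1$ iff some selected set covers $v$. The point of the hypothesis \eqref{eq:exp-moment-cA} is that the ``cluster of influence'' of a site — the set of sites whose selected sets could reach it, explored through overlaps of the connected hulls ${\sf c}(A)$ — is subcritical.

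The key step is a stochastic-domination lemma: the random collection of occupied connected hulls $\{{\sf c}(A) : X_A = 1\}$ is dominated by a Poisson/Bernoulli process of ``blobs'' whose sizes have exponential tails with rate $\lambda$, and the union of such blobs, viewed as a site percolation on $G$, percolates only in a bounded cluster around a given vertex with exponentially small probability provided $\lambda$ exceeds the growth rate needed to sum over connected subgraphs and over ways they can be covered. This is precisely where $\lambda_* = \log(3\Delta - 1)$ enters: the number of connected subsets of $G$ of size $n$ containing a fixed vertex is at most $(e\Delta)^n$ or so, but the relevant combinatorial count in the cluster exploration — choosing at each step which of the $\le \Delta$ neighbors to expand and which hull to attach — yields a branching number $\le 3\Delta - 1$; so $e^{-\lambda \cdot(\text{size})}$ summed against this count converges when $e^{\lambda} > 3\Delta - 1$. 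For $V = \Z$ the geometry is one-dimensional and the relevant count is polynomial rather than exponential, so any $\lambda > 0$ suffices, giving $\lambda_* = 0$; I would treat this case by a separate, simpler renewal/interval argument (the selected intervals ${\sf c}(A)$ with exponential length tails tile $\Z$ sparsely, and the occupation of a site depends only on intervals meeting a random window of exponentially-small-tail length).

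With the domination in hand, the finitary coding is produced by the standard ``grab a ball, check for self-containment'' scheme (cf.\ van den Berg–Steif / Häggström): from vertex $v$, read the IID bits in balls of increasing radius $R$; stop at the first $R$ such that no selected set anchored outside $B(v,R)$ reaches into $B(v, R/2)$ (a ``blocking'' event), which is detectable from the IID bits read so far because the anchor of each set lies inside its hull. On this event, $\bar X_v$ is a deterministic function of the bits in $B(v,R)$, so the coding radius is at most $R$; the probability that the required $R$ exceeds $n$ is bounded by the probability that the influence cluster of $v$ has size $\gtrsim n$, which is exponentially small by the subcriticality above. The same scheme respects the automorphism group $\Gamma$ since all choices (anchors, exploration order) are made using only $\Gamma$-equivariant data, giving a finitary $\Gamma$-factor.

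The main obstacle I anticipate is making the ``blocking event is detectable from bounded data'' step genuinely local: a set $A$ can be selected with tiny probability yet have an enormous hull, and its anchor could be far outside $B(v,R)$ while $A$ still intersects $B(v,R/2)$. The resolution is that the exponential moment \eqref{eq:exp-moment-cA} with $\lambda > \lambda_*$ forces the expected number of selected sets with hull of size $\ge n$ meeting a fixed vertex to decay exponentially in $n$, so with high probability no such large-hull set is present near $v$; one then runs the exploration only through hulls of size $\le$ (current radius), and a union bound over the finitely many ``bad'' large-hull possibilities closes the argument. Getting the constants to line up so that the exploration's branching ratio is exactly $3\Delta - 1$ (rather than something larger) will require care in how overlaps of hulls are charged — I would charge each new hull to the edge of $G$ through which the exploration entered it, yielding the factor $\Delta - 1$ for graph expansion times a factor $\le 3$ for the bookkeeping of which hull and which endpoint, but the precise optimization of this constant is secondary to the qualitative result.
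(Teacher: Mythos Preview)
Your outline captures the right intuition (connected hulls ${\sf c}(A)$, subcriticality, exponential coding radius), but it diverges from the paper's route and has a genuine gap at exactly the point you flag as ``the main obstacle''.

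The paper does \emph{not} run a direct cluster exploration or Peierls count. Instead it passes to the auxiliary process $\tilde X := \bigcup\{ {\sf c}(A) : X_A = 1,\ \diam A \ge N\}$ and isolates the structural fact that $\tilde X$ is \emph{decoupled by zeros}: whenever $\tilde X_{\partial U} \equiv 0$, the inside and outside of $U$ are conditionally independent. After sprinkling an IID Bernoulli field $\xi$ of density $\eps = e^{-\lambda}$ to form $Y = \tilde X \cup \xi$, the stochastic-domination \cref{lem:domination3} (with the sprinkled $\xi$ playing the role of the $\eps$-perturbation) yields $\P(Y_v = 1 \mid (Y_u)_{u\neq v}) \le \delta < \tfrac{1}{3\Delta-1}$. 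These two properties together are precisely the hypotheses of \cite[Theorem~4.1]{ray2023characterizations}, which is invoked as a black box to conclude that $Y$ is FFIID with exponential coding radius. One then recovers $\tilde X$ from $Y$, and $\bar X^1$ from $\tilde X$, using the cluster-by-cluster conditional independence that decoupling-by-zeros provides. The constant $3\Delta-1$ is thus inherited from the cited result, not from any bookkeeping of the sort you sketch; your ``$(\Delta-1)\times 3$'' heuristic is not how it arises, and a direct argument would likely produce a worse threshold.

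Your detectability step does not close. You propose to stop at the first $R$ such that no selected set anchored outside $B(v,R)$ has hull meeting $B(v,R/2)$, but this event is \emph{not} measurable with respect to the IID bits in $B(v,R)$: a set anchored far away with a huge hull could reach in, and nothing in the local data witnesses its absence. Your fix (truncate to hulls of size $\le$ current radius and union-bound the rest) is a probability estimate, not a stopping rule; it shows the bad event is rare, not that its complement is locally certifiable. The paper's decoupling-by-zeros property is exactly what converts a probability estimate into a certifiable stopping rule: once you see a ring of zeros in $Y$ around $v$, you \emph{know} the inside is independent of the outside, and the subcritical domination guarantees such a ring appears within exponential distance. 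Without this Markov-type property (which you never mention), your scheme does not yield a genuine finitary factor.

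For the $\Z$ case with $\lambda_*=0$, the paper again takes a different route from your renewal sketch: it introduces the process $\check X_i := \{A \in \cC_0 : X_{A+i}=1\}$ recording all selected sets straddling site $i$, proves this is a countable-state mixing Markov chain with exponential return times, and then cites \cite{angel2021markov} for the FFIID conclusion. Your interval/renewal idea is morally aligned with this, but the actual mechanism is the Markov chain structure rather than a direct tiling argument.
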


Note that in the case when $V=G=\Gamma=\Z$, the required condition is that the diameter of $A$ has some exponential moment.
We prove further results on $\Z$, showing that $\bar X$ is a hidden (countable-state) Markov chain, and that this richer process is finitarily isomorphic to an IID process. See \cref{sec:ffiid-one-dim}.

We now turn to a different aspect of the process $\bar X$, regarding stochastic domination.
For translation invariant Poisson representable processes on $V=\Z^d$, Forsstr{\"o}m, Gantert and Steif gave a precise criteria for when there is stochastic domination from below by an IID process of density $p$. In our setting, this yields that $\bar X$ always stochastically dominates a non-trivial IID process. In that paper, the authors ask what can be said concerning stochastic domination from above by IID processes. This is addressed in the following result.

\begin{thm}\label{thm:stochastic-domination}
 Suppose that the probabilities $(p_A)$ are invariant under a transitive group of permutations $\Gamma$ on $V$.
 Then $\bar X$ is stochastically dominated by a non-trivial IID process whenever
 \begin{equation}\label{eq:dom-exp-moment}
  \sum_{A \Subset V : v \in A} p_A e^{\lambda |A|} < \infty \qquad\text{for some }\lambda>0 \text{ and } v \in V .
 \end{equation}
 In the case when $V=\Gamma=\Z^d$, $d \ge 1$, this condition is also necessary.
\end{thm}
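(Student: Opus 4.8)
Throughout I will use the standard characterization that $\bar X$ is stochastically dominated by the product measure of density $q$ if and only if $\E[f(\bar X)]\le\E[f(Z)]$ for every bounded increasing $f$, where $Z$ is i.i.d.\ with density $q$; in particular this is equivalent (for a given translation invariant $\bar X$) to checking a Holley-type conditional criterion, and it forces $\P(W\subseteq\bar X)\le q^{|W|}$ for every finite $W\Subset V$. The two directions are quite different: sufficiency is a renormalization argument, necessity (on $\Z^d$) is the harder one.

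\textbf{Sufficiency.} The naive conditional criterion fails directly, because conditioning $\bar X$ on its values elsewhere can push $\P(\bar X_v=1\mid\cdots)$ arbitrarily close to $1$; so instead I would \emph{decompose}. Write $\bar X=\bigcup_{k\ge1}\bar X^{(k)}$ where $\bar X^{(k)}:=\bigcup\{A:X_A=1,\ |A|=k\}$; these are mutually \emph{independent} (driven by disjoint families of the $X_A$), and since a union of independent product measures is again a product measure, it suffices to dominate each $\bar X^{(k)}$ by a product measure of density $q_k<1$ with $\sum_k q_k<\infty$: then $\bar X$ is dominated by the product measure of density $1-\prod_k(1-q_k)<1$. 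Now each $\bar X^{(k)}$ is a bounded-range factor of an i.i.d.\ field --- on $\Z^d$ its dependency graph has degree at most $(2k-1)^d$, since sets of size $k$ have diameter $<k$ --- with marginal at most $\sigma_k:=\sum_{A\ni0,|A|=k}p_A$. The exponential moment gives $\sigma_k\le Ce^{-\lambda|k|}$, so for all $k$ larger than some $K$ the quantity $\sigma_k\cdot(2k-1)^d$ is below the Liggett--Schonmann--Stacey threshold, and LSS (in its form for domination from above of finitely dependent fields with small marginal) yields $\bar X^{(k)}\preceq$ product of density $q_k\le C' k^d e^{-\lambda k}$, which is summable. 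For the finitely many $k\le K$ I would invoke the soft fact that a finitely dependent process with marginal $<1$ is dominated by a product measure of density $<1$; this follows from a one-step renormalization (coarse-grain into large blocks to make the ``block entirely occupied'' event rare, apply LSS at the block level, and couple the resulting block-constant Bernoulli field below an i.i.d.\ field of density close to $1$). For general transitive $\Gamma$ on a general $V$ one replaces the decomposition by size with the decomposition by $\Gamma$-orbit of the selected sets, so that each piece again has bounded dependency degree; the bookkeeping $\sum_S(\text{marginal of }\bar X^S)\cdot(\text{degree})<\infty$ is exactly what the exponential moment provides.

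\textbf{Necessity ($V=\Gamma=\Z^d$).} Assume $\bar X\preceq$ product of density $q<1$; I must deduce $\sigma_k\le Ce^{-\lambda k}$. The upper input is $\P(B\subseteq\bar X)\le q^{|B|}$ for every box $B$. The lower input must exploit that a selected set of size $k$ contributes $k$ occupied sites in one shot, so that a box $B$ of volume $N$ can be covered by only about $N/k$ selected sets of size $\approx k$: I would partition $B$ into cells which are translates of a suitable size-$k$ shape carrying a definite fraction of the mass $\sigma_k$, observe that selected sets contained in distinct cells are independent, and thereby bound $\P(B\subseteq\bar X)$ below by roughly $\exp\!\big(-c\,(N/k)\log(1/\sigma_k)\big)$. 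Comparing with $q^{N}$ then gives $\log(1/\sigma_k)\ge c' k$, i.e.\ $\sigma_k\le e^{-\lambda k}$, hence $\sum_{A\ni0}p_Ae^{\lambda'|A|}=\sum_k\sigma_k e^{\lambda'k}<\infty$ for $\lambda'<\lambda$.

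\textbf{Main obstacle.} The crux is precisely this last lower bound. A size-$k$ shape need not tile $\Z^d$, and a very ``spread out'' shape covers a box only inefficiently, so one cannot simply use $N/k$ translates of a single shape (which, moreover, must have non-negligible probability --- and the $\sigma_k$-mass may be smeared over super-exponentially many shapes). Handling this requires a genuine covering argument: I would use that the fractional covering density of $\Z^d$ by translates of any finite set equals $1$, pass to a sub-collection of shapes on which the cover can be rounded to efficiency $O(1)$ while retaining a constant fraction of $\sigma_k$, and only then run the independence-over-cells estimate; getting an exponential (rather than merely bounded, which is all the easy FKG/sublattice estimates give) bound on $\sigma_k$ out of the box-covering probability is the heart of the theorem.
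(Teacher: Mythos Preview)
Your sufficiency argument contains a genuine error. You assert that ``sets of size $k$ have diameter $<k$'' and hence that $\bar X^{(k)}$ has bounded-degree dependency graph, so that Liggett--Schonmann--Stacey applies. This is false: a pair $\{0,n\}$ has size $2$ and diameter $n$ arbitrarily large, so $\bar X^{(2)}$ is not finitely dependent at all --- $\bar X^{(2)}_0$ and $\bar X^{(2)}_n$ both depend on $X_{\{0,n\}}$ for every $n$. This is not a technicality; it is precisely the obstruction the paper is built around (the motivating question from \cite{forsstrom2024poisson} is the pairs case). Your orbit decomposition for general $\Gamma$ does give finitely dependent pieces, but there are infinitely many orbits and you would then need to control the LSS output summably; you have not done this, and on $\Z^d$ your size-decomposition is strictly coarser and fails outright. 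The paper instead splits into just two pieces by \emph{diameter}: for small diameter it uses the elementary factorization $X_A=\prod_{v\in A}Y_{A,v}$ with $\E Y_{A,v}=p_A^{1/|A|}$ to dominate by an IID field; for large diameter it proves a new sprinkling lemma (Lemma~\ref{lem:domination3}) showing that if one independently adds Bernoulli$(\eps)$ noise to $\bar X$, the resulting process $Y$ satisfies $\P(Y_v=1\mid Y_{u},u\neq v)\le \eps+(1-\eps)\eps^{-1}\sum_{A\ni v}p_A\eps^{-|A|+1}$, which is $<1$ for $\eps=e^{-\lambda}$ once the tail sum is small. This lemma is the substitute for LSS and does not require finite dependence.

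For necessity your diagnosis of the obstacle is accurate, but your proposed fix (fractional tilings, rounding to a sub-collection of shapes) is vague and it is not clear it yields the required bound when the mass $\sigma_k$ is spread over super-exponentially many shapes. The paper's solution is a \emph{randomized} covering: for each $b$ let $Y_b$ be a random set with law $\propto p_A\mathbf 1_{|A|=n,\min A=b}$, place these independently at a random $B\subset[N]^d$ of density $\approx\beta/n$, and show by a first-moment computation that $Y_B:=\bigcup_{b\in B}Y_b$ covers all but an $e^{-\beta}$-fraction of $[N]^d$ with positive probability. Fixing a good $B$ and a good defect set $D$, one then lower bounds $\P(\bar X_{[N]^d\setminus D}\equiv 1)$ by $\P(Z_0\neq\emptyset)^{|B|}$ via Paley--Zygmund, giving $\P(\bar X_S\equiv 1)^{1/|S|}\ge\exp\bigl(-\tfrac{C\beta}{n}\log(1+\tfrac n{p_n})-C\beta e^{-\beta}\bigr)$. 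This probabilistic averaging is what sidesteps the shape/tiling issue you identified.
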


\subsection{Discussion and proof overview}

As we have mentioned, the processes $\bar X$ considered here are special cases of what is called Poisson representable processes. Let us describe what these are more generally. Let $\nu$ be a $\sigma$-finite measure on $\cP(V) \setminus \{\emptyset\}$, the space of non-empty subsets of~$V$. Let $Y^\nu$ be the Poisson process on $\cP(V) \setminus \{\emptyset\}$ with intensity measure~$\nu$. Then $Y^\nu$ is a random (multi-)collection of non-empty subsets of $V$. Now let $X^\nu$ denote the union of these sets. This yields a random subset $X^\nu$ of $V$, and as before, we may regard it as a $\{0,1\}$-valued process on~$V$. A process that can be obtained in this way is called a \textbf{Poisson representable process}. This class of processes was recently introduced by Forsstr{\"o}m, Gantert and Steif in~\cite{forsstrom2024poisson}, where the main focus was to determine which processes belong to this class. Some mixing properties of such processes have been investigated in~\cite{bethuelsen2025mixing}.

When $\nu$ concentrates on finite subsets of $V$, i.e., when $\nu(\{ A \subset V : |A|=\infty \})=0$, then $X^\nu$ can be constructed more concretely in the manner described in the introduction. Namely, setting $p_A := 1 - e^{-\nu(\{A\})}$ for $A \Subset \Z^d$, we have that $Y^\nu$ (regarded here as a collection with no repetitions) has the same distribution as $\{ A \Subset \Z^d : X_A = 1\}$, and hence, $X^\nu$ has the same distribution as $\bar X$. In this sense, the processes $\bar X$ considered in this paper are generic Poisson representable processes whose intensities concentrate on finite subsets.

Our main focus in this paper is on the stationary case, where $V=\Z^d$ for some $d \ge 1$ and the family of probabilities $(p_A)$ (or equivalently the intensity measure $\nu$) is invariant under translations, in which case the process $\bar X$ is also translation invariant. Recall that a process $Z=(Z_v)_{v \in \Z^d}$ is a \textbf{factor of IID} if it can be expressed as an equivariant function of IID random variables, i.e., if $Z$ has the same distribution as $F(U)$, where $U=(U_v)_{v \in \Z^d}$ are IID, and $F$ is some measurable function such that $F(U+v)=F(U)+v$ almost surely for all $v \in \Z^d$. The process is a \textbf{finitary factor of IID (FFIID)} if the function $F$ can further be chosen so that $F(U)_0$ is almost surely determined by a random, but finite, portion of $U$. More precisely, if there exists a stopping time $R$ with respect to the filtration $\cF_n := (U_v)_{\|v\| \le n}$ (the choice of norm here is not important for our purposes), such that $F(U)_0$ is measurable with respect to $\cF_\tau$. Such an $R$ is called a coding radius.

In the stationary case, it is easy to see that $\bar X$ is always a factor of IID.
This is because we may regard each random variable $X_A$ as ``attached'' to a specific point in $\Z^d$, say to $\min A$, the minimal element of $A$ in the lexicographic order.
More specifically, let $(U_v)_{v \in \Z^d}$ be IID, with each $U_v$ consisting of a sequence of independent Bernoulli random variables $U_{v,A}$, $A \Subset \Z^d$, $\min A = v$, of parameter $p_A$. Define $F(U)_w := \max \{ U_{v,A} : u \in v+A \}$. It is straightforward to check that $F$ is equivariant and that $F(U)=\bar X$ in distribution.

The main question of concern in this paper is whether or not $\bar X$ is a \emph{finitary} factor of IID. Note that if we only allow to include sets of bounded diameter, i.e., $\sup \{ \diam A : p_A>0 \} < \infty$, then the map described above has bounded range, so that $\bar X$ is a block factor of IID. When the diameter is unbounded, it is not hard to see that $\bar X$ is not finitely dependent, and hence not a block factor of IID.
Suppose now that the allowed sets have unbounded diameter, but bounded size, i.e., $\sup \{ |A| : p_A>0 \} < \infty$. In this case, it is evident that the ``natural'' map described above is not finitary. However, this does not mean that the process $\bar X$ itself is not FFIID, as there may be an alternative construction via a map which is finitary. The problem we are concerned with is determining whether there is such an alternative construction.

The problem is already interesting in one dimension and in the situation in which we only allow to include sets of size two, i.e., $p_A=0$ unless $|A|=2$. In fact, this is the setting of the problem posed by Forsstr{\"o}m, Gantert and Steif in \cite[Question 7]{forsstrom2024poisson}. In this case, the parameters of the process are described by the sequence of numbers $(p_n)_{n=1}^\infty$ given by $p_n:=p_{\{0,n\}}$. Recall that we assume that $p_n<1$ for all $n$ and that $\sum_n p_n < \infty$, and that this is necessary and sufficient for $\P(\bar X \equiv 1) = 0$.

Let us first explain a simple construction of $\bar X$ as a finitary factor of IID under the assumption that $\sum_n \sqrt{p_n} < \infty$. In this case, we may ``split'' the randomness of $X_{\{i,i+n\}}$ into a pair of independent coin tosses of success probability $\sqrt{p_n}$ each, one of which we regard as ``attached'' to the endpoint $i$, and the other to $i+n$. Since $\sum_n \sqrt{p_n} < \infty$, the number of successful coin tosses attached to a given site $i$ is almost surely finite, and in order to determine the value of $\bar X_i$, one only needs to inspect the corresponding paired coin tosses. Since we only inspect finitely many pairs (and the pairs which we need to inspect are determined locally), this describes $\bar X$ as a finitary factor of the IID coin tosses. More precisely, let $(U_i)_{i \in \Z}$ be IID, with each $U_i$ consisting of a sequence of independent Bernoulli random variables $U^\pm_{i,n}$, $n \ge 1$, of parameter $\sqrt{p_n}$. Define $F(U)_i := \max \{ \max \{ U^+_{i,n} U^-_{i+n,n},U^-_{i,n} U^+_{i-n,n}\} : n \ge 1 \}$. It is straightforward to check that $F$ is equivariant and that $F(U)=\bar X$ in distribution (think of $X_{\{i,i+n\}}$ as $U^+_{i,n} U^-_{i+n,n}$).

When $\sum_n \sqrt{p_n} = \infty$, the same construction is still possible and yields $\bar X$ as a factor of IID, but the map is not finitary (there will be infinitely many successful coin tosses attached to any given site $i$). The idea here is to modify the construction to consider many pairs $\{i,i+n\}$ simultaneously, and couple them together in a clever way using stochastic domination tools we develop.
Let us first give a high-level explanation of the approach (which is also relevant for the general case).

Suppose that $\cA_1,\cA_2,\dots$ is some decomposition of the allowed sets $A \Subset \Z^d$ into translation invariant families (by the latter we just mean that $A \in \cA_k$ implies that $A+n \in \cA_k$). In the language of Poisson representable processes, this corresponds to a decomposition $\nu=\nu_1+\nu_2+\cdots$ of the intensity measure $\nu$ into invariant measures. Let $\bar X^k$ denote the process corresponding to $\cA_k$, i.e., $\bar X^k := \bigcup \{ A \in \cA_k : X_A=1 \}$ (this is $X^{\nu_k}$ in the language of Poisson representable processes).
Clearly, $\bar X = \max_k \bar X^k$, with the maximum taken coordinate-wise. Suppose now that each $\cA_k$ consists of bounded subsets (i.e., of sets of diameter at most some $d_k$). Then, as we explained earlier, it is easy to to see that each $\bar X^k$ is a block factor of IID. This immediately implies that the maximum of all $\bar X^k$ is a factor of IID, but it tells us nothing about the finitariness of the resulting factor.

The idea will be to show that there is a decomposition as above, such that each $\bar X^k$ is not merely a block factor of IID, but rather is a block factor of IID with the additional property that the coding radius is zero with probability at least $1-\epsilon_k$, for any pre-chosen sequence $(\epsilon_k)$ of positive numbers. Choosing a summable sequence, we shall conclude via the Borel--Cantelli lemma that almost surely only finitely many of the coding radii are positive. From this it follows that the maximum of the $\bar X^k$ is a finitary factor of IID. Hence, $\bar X$ is FFIID.

Toward establishing the above, loosely speaking, we split the family $\cA_k$ into countably many finite collections, and show for each collection $\cF$ that $\bar X^\cF:=(\max\{ X_A : A \in \cF,\, i \in A\})_i$ is stochastically dominated by independent Bernoulli random variables (these are analogous to the coin tosses described earlier). In this manner, each $v$ has many independent coin tosses attached to it (one coming from each collection $\cF$; of course, if $i$ does not belong to any set in $\cF$ then the associated Bernoulli random variable has parameter zero), and the aim is to make the sum of their parameters less than $\eps_k$ (this ensures that the coding radius is positive with probability at most $\eps_k$). Since these coin tosses dominate the actual value of $\bar X^\cF_i$, only for the coin tosses which are successful do we need to further inspect the underlying monotone coupling between $\bar X^\cF$ and the coin tosses. This ensures that the coding radius is almost surely finite, i.e., that the construction is finitary.

In the case where we only allow to include sets of size two, each collection $\cF$ as above will consist of pairs $\{i,i+n\}$ having a common value of $i$, and this approach will give a rather clean description of $\bar X$ as a finitary factor of IID.
In the case where we allow to include sets of size three, or sets of some bounded size, this approach will still work, but the collections $\cF$ will not be so simple and this will lead to the outcome that $\bar X^k$ is only a finitary factor of IID (rather than a block factor as before) with the additional aforementioned property that the coding radius is typically zero. Note that this has no bearing on the final conclusion that $\bar X$ is FFIID.
Extending this to the case where sets of unbounded size are allowed, but~\eqref{eq:exp-moment-all} holds, is just a matter of choosing the decomposition appropriately and does cause further trouble. The extension to $\Z^d$ is also not much of a complication. This will lead to a proof of \cref{thm:ffiid-under-all-exp-moment}.

It is also worth noting that these ideas yield more than just that $\bar X$ is FFIID. The proof in fact shows that under the assumptions of \cref{thm:ffiid-under-all-exp-moment}, a much richer process, denoted $\hat X$, is FFIID.
This is the process which, loosely speaking, records the included sets that contain a given site. Precisely, let $\cA_0$ be the collection of finite subsets of $\Z^d$ that contain 0, let $\cP_{\text{fin}}(\cA_0)$ be the collection of finite subsets of $\cA_0$, and define a $\cP_{\text{fin}}(\cA_0)$-valued process $\hat X$ by
\begin{equation}\label{eq:hat-X}
 \hat X_v := \{ A \in \cA_0 : X_{A+v}=1 \} .
\end{equation}
Observe that $\bar X$ is a simple block factor of $\hat X$, namely, $\bar X_v = \1\{\hat X_v \neq \emptyset\}$. In particular, $\bar X$ is FFIID whenever $\hat X$ is such.

In the converse direction, we have \cref{thm:ffiid-implies-some-exp-moment} which states that an exponential moment for the size of the included sets is necessary in order for $\bar X$ to be FFIID. This relies on our results regarding stochastic domination of $\bar X$ by IID processes, which imply that when~\eqref{eq:exp-moment} does not hold, the probability that $\bar X$ is identically 1 on a large box decays slower than exponentially in the volume of the box. This is at odds with the fact that a finitary factor of IID satisfies the mean ergodic theorem with an exponential rate.
This leads to a proof of \cref{thm:ffiid-implies-some-exp-moment}.

There is a gap between the sufficient condition given by \cref{thm:ffiid-under-all-exp-moment} and the necessary condition given by \cref{thm:ffiid-implies-some-exp-moment}.
While we have not been able to close this gap completely, we have some results in this direction. We begin with the following example that shows that~\eqref{eq:exp-moment} is not a sufficient condition.

\begin{example}[a single exponential moment is not sufficient]\label{ex:single-exp-moment-not-sufficient}
We give an example demonstrating that~\eqref{eq:exp-moment} is not a sufficient condition for $\bar X$ to be FFIID.
Fix $\lambda>0$ and let $k_0$ be large enough so that $\sum_{k=k_0}^\infty k e^{-\lambda k} < e^{-\lambda}$. Let $p_A:=e^{-\lambda |A|}$ if $A$ is a translation of $A_k := \{0,k,k,\dots,(k-1)k\}$ for some $k \ge k_0$, and let $p_A:=0$ otherwise. Then clearly~\eqref{eq:exp-moment} holds (with any smaller $\lambda$), but~\eqref{eq:exp-moment-all} does not. Note also that $\P(\bar X_0=1) < e^{-\lambda}$. Suppose that $\bar X$ is a factor of IID and let $R$ denote the coding radius. Then
\[ e^{-\lambda k} = p_{A_k} \le \P(\bar X_{A_k} \equiv 1) \le (\P(\bar X_0=1) + \P(R \ge k/2))^k .\]
Thus, $\P(\bar X_0=1) + \P(R \ge k/2) \ge e^{-\lambda}$, and taking $k$ to infinity, we see that $\P(R=\infty) \ge e^{-\lambda} - \P(\bar X_0=1) > 0$, so that the factor is not finitary.
Note that this conclusion holds also if we increase some of the $p_A$, as long as we preserve the property that $\P(\bar X_0=1) < e^{-\lambda}$. In particular, this also provides an example with $p_{\{0\}}>0$.
\end{example}

The example shows that even having a strong exponential moment, in the sense that~\eqref{eq:exp-moment} holds for a large $\lambda$, is not sufficient.
Note, however, that the example hinges on the fact that $\P(\bar X_0=1)$ is relatively small, namely, less than $e^{-\lambda}$. Could it be that something of this sort is true in any counterexample? Of course, $\P(\bar X_0=1) \ge p_1 := p_{\{0\}}$, so that, for this example to work, $p_1$ must be small. \cref{thm:ffiid-with-large-p1} implies that this latter property is required in any counterexample. Indeed, the theorem implies that if we fix $(p_A)_{|A|>1}$ satisfying~\eqref{eq:exp-moment} (which does not depend on $p_{\{0\}}$), and begin varying the common value $p_1$ of $p_{\{v\}}$, $v \in \Z^d$, then for $p_1$ sufficiently close to 1, we always have that $\bar X$ is FFIID.

Note that if $\bar X$ is FFIID for some value of $p_1$, then it is also so for any larger value of $p_1$. This is because increasing $p_1$ has the effect of independent sprinkling, that is, $\bar X$ for $p_1=p+\eps$ is obtained from $\bar X$ for $p_1=p$ by taking its union with an independent Bernoulli IID process of parameter $\eps/(1-p)$. This yields the existence of a critical value $p_c \in [0,1]$ (which depends on $(p_A)_{|A|>1}$) such that $\bar X$ is FFIID when $p_1>p_c$, but not when $p_1<p_c$. \cref{thm:ffiid-with-large-p1} implies that we always have $p_c<1$ when~\eqref{eq:exp-moment} holds. Of course, by \cref{thm:ffiid-under-all-exp-moment}, $p_c=0$ when~\eqref{eq:exp-moment-all} holds. When~\eqref{eq:exp-moment} holds but~\eqref{eq:exp-moment-all} does not, both $p_c=0$ and $p_c>0$ are possible. The possibility of the former is a consequence of \cref{thm:ffiid-connected}, and the possibility of the latter was demonstrated by~\cref{ex:single-exp-moment-not-sufficient}.

This last discussion is related to another question raised in~\cite{forsstrom2024poisson} (see Question 6 there). The question asks, for a Poisson representable process $X^\nu$ with intensity measure $\nu$, whether the behavior of $X^{c\nu}$ can depend in an essential way on $c$.
A slight modification of the previous discussion shows that its behavior with regards to finitary factors can indeed depend on $c$. Recall that the correspondence between $\nu$ and $(p_A)$ is that $p_A=1-e^{-\nu(\{A\})}$. Thus, $\nu \mapsto c\nu$ translates to $p_A \mapsto p'_A:=1-(1-p_A)^c$. Let $(p_A)$ be as in \cref{ex:single-exp-moment-not-sufficient}, and recall that we may take $p_1>0$ in that example. Let $\nu$ be the corresponding measure. Then $X^\nu$ is not FFIID, but $X^{c\nu}$ is for all $c$ large enough. To see the latter, choose $\lambda>0$ for which~\eqref{eq:exp-moment} holds, and note that $p'_A \le c \cdot p_A$ for $c>0$, so that~\eqref{eq:exp-moment} continues to hold for the perturbed $(p'_A)$ with the same $\lambda$. Thus, \cref{thm:ffiid-with-large-p1} implies that $X^{c\nu}$ is FFIID whenever $1-(1-p_1)^c \ge e^{-\lambda}$.

Let us mention an important difference between \cref{thm:ffiid-under-all-exp-moment} and \cref{thm:ffiid-with-large-p1}.
As formulated, \cref{thm:ffiid-with-large-p1} implies \cref{thm:ffiid-under-all-exp-moment} in the case when $p_1>0$.
However, besides allowing for $p_1=0$, recall from the earlier discussion that in the latter theorem the conclusion can be strengthened to hold for the richer process $\hat X$ described above. Crucially, this is not true of \cref{thm:ffiid-with-large-p1}. Indeed, when $p_c>0$, it cannot be that $\hat X$ is FFIID (regardless of the actual value of $p_1$). This also means that the proof of \cref{thm:ffiid-with-large-p1} must follow a different approach than that described earlier, one which utilizes the fact that when $X_{\{i\}}=1$ we do not need to know the values of $(X_A)_{|A|>1, i \in A}$ in order to know $\bar X_i$, which must of course equal 1.

Let us now turn to \cref{thm:ffiid-connected}, which states that under a certain exponential moment condition on the ``connected size'' of the included sets, $\bar X$ is FFIID, and furthermore, the coding radius has exponential tail. For the sake of this discussion, let us assume that we only allow to include sets $A$ that are connected. This yields a Markovian property for $\bar X$. Namely, it satisfies the domain Markov property in the case when the boundary condition is all zero. This, together with additional stochastic domination tools, allows us to apply a general result from~\cite{ray2023characterizations} about finitary codings for such processes to eventually deduce the desired result for $\bar X$. The requirement in~\eqref{eq:exp-moment-cA} that $\lambda>\lambda_*$, as well as the value of $\lambda_*$, comes from the assumptions needed in order to apply the result from~\cite{ray2023characterizations}. We do not know whether this can be relaxed. While this applies in any dimension, more can be said in the one dimensional case. Here, the process $\bar X$ is in fact a hidden Markov chain. More specifically, we shall show that it is a block factor of a countable-state mixing Markov chain having exponential return times. The underlying Markov chain is simple to describe: it is the process which, loosely speaking, records the included sets that ``cross over'' a given site (compare this with the earlier $\hat X$ which records the included sets that \emph{contain} a given site). A result of Angel and the author~\cite{angel2021markov} states that any such Markov chain (countable-state mixing Markov chain having exponential return times) is FFIID with a coding length having exponential tail. Using this we conclude that the theorem holds with $\lambda_*=0$ in the one-dimensional case. Finally, a result of Rudolph~\cite{rudolph1982mixing} states that any such Markov chain is finitarily isomorphic to an IID process. We gave a new proof of this fact~\cite{spinka2025newprooffinitaryisomorphism}, which is more direct and does require the complicated machinery developed by Rudolph in~\cite{rudolph1981characterization}. Let us stress that the finitary isomorphism result here is for the Markov chain, and not for $\bar X$ which is only a block factor of the Markov chain.

Let us now turn to the stochastic domination aspect.
It turns out that our constructions of finitary codings are strongly aided by suitable stochastic domination properties (as hinted at before).
The stochastic domination tools we develop for those purposes also yield independent results which are interesting in their own right.
For stochastic domination of $\bar X$ from below by an IID Bernoulli process, an exact condition was obtained in~\cite{forsstrom2024poisson} (the result there is for general Poisson representable processes). In our setting, it says that $\bar X$ stochastically dominates an IID Bernoulli process of parameter $p$ if and only if $\P(\bar X_B \equiv 0) \le (1-p)^{|B|}$ for every box $B \subset \Z^d$. It is easy to see that this must hold for some $p>0$, so that $\bar X$ always stochastically dominates a non-trivial IID process. Forsstr{\"o}m, Gantert and Steif asked~\cite[Question 8]{forsstrom2024poisson} what one can say about stochastic domination from above by IID processes. \cref{thm:stochastic-domination} gives an answer to this for the processes consider here (recall that these are generic Poisson representable processes whose intensities concentrate on finite sets). We do not have a precise characterization of which IID processes stochastically dominate $\bar X$, but our proof gives upper and lower bounds on the parameter $p$ for which this is possible.

\bigskip
\noindent\textbf{Open problems.}

\begin{enumerate}
    \item Find a necessary and sufficient condition for $\bar X$ to be a finitary factor of IID.
    \item Find a necessary and sufficient condition for $\hat X$ (defined in~\eqref{eq:hat-X}) to be a finitary factor of IID.
\end{enumerate}

\medskip
\noindent\textbf{Organization.}
In \cref{sec:stoc-dom} we prove \cref{thm:stochastic-domination} about stochastic domination and develop the stochastic domination tools needed for our finitary coding results.
In \cref{sec:finitary-coding} we prove our results about finitary codings, starting with \cref{thm:ffiid-implies-some-exp-moment} about the necessity of an exponential moment for the existence of a finitary coding (\cref{sec:no-finitary-coding}), next the special case of \cref{thm:ffiid-under-all-exp-moment} in which only pairs are allowed to be included (\cref{sec:pairs}), and then the general case of \cref{thm:ffiid-under-all-exp-moment} about the existence of a finitary coding when all exponential moments exist (\cref{sec:finitary-coding-all-exp-moments}).
We then give the proof of \cref{thm:ffiid-with-large-p1} in \cref{sec:ffiid-p1}, and the proof of \cref{thm:ffiid-connected} in \cref{sec:connected}. Finally, the one-dimensional case is considered in more depth in \cref{sec:ffiid-one-dim}, where we prove a finitary isomorphism result for a related process.

\bigskip
\noindent\textbf{Acknowledgments.}
We thank Lev Buhovsky and Tom Meyerovitch for useful discussions.

\section{Stochastic domination}\label{sec:stoc-dom}

We continue to use the notation from before. Recall that $V$ is a countable set, $(X_A)_{A \Subset V}$ are independent Bernoulli random variables, $(p_A)$ are their parameters, and $\bar X_v=\max_{A \ni v} X_A$.

The proof of \cref{thm:stochastic-domination} relies on the following lemma.

\begin{lemma}\label{lem:domination2}
     Let $\eps_v \in (0,1]$, $v \in V$, be arbitrary, and define $\delta_v := \epsilon_v + (1-\eps_v)\sum_{A \ni v} p_A \cdot \prod_{u \in A \setminus \{v\}} \eps_u^{-1}$.
     Let $Z=(Z_v)_{v \in V}$ be independent Bernoulli random variables with parameters $(\delta_v)$.
     Then $\bar X$ is stochastically dominated by $Z$.
\end{lemma}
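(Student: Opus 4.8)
Rather than construct a monotone coupling of $\bar X$ with $Z$ directly — the natural auxiliary variables one would attach to the points of the selected sets cannot be made independent across $V$ — the plan is to induct on the number of sets $A$ with $p_A>0$, removing them one at a time. (Throughout, write ``$\mathrm{Bernoulli}(\delta)$'' with $\delta\ge 1$ for the constant $1$.) First reduce to finitely many sets: there are only countably many $A$ with $p_A>0$, and if $\bar X^{(n)}$ denotes the process built from the first $n$ of them, then $\bar X^{(n)}\to\bar X$ pointwise while $\delta^{(n)}_v:=\epsilon_v+(1-\epsilon_v)\sum_{i\le n:\,A_i\ni v}p_{A_i}\prod_{u\in A_i\setminus v}\epsilon_u^{-1}$ increases to $\delta_v$; since stochastic domination is preserved under weak limits, it suffices to treat the case of finitely many positive $p_A$.

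If there are none then $\bar X\equiv 0$ and there is nothing to prove. Otherwise fix a set $A_0$ with $p:=p_{A_0}>0$ and let $\bar X'$ be the process built from the remaining sets; by the inductive hypothesis $\bar X'$ is dominated by the product measure $Z'$ with marginals $\delta'_v:=\epsilon_v+(1-\epsilon_v)\sum_{A\ni v,\,A\ne A_0}p_A\prod_{u\in A\setminus v}\epsilon_u^{-1}$, so we may couple $\bar X'\le Z'$. Adjoining an independent $X\sim\mathrm{Bernoulli}(p)$ (playing the role of $X_{A_0}$) gives $\bar X=\bar X'\vee(X\,\mathbf{1}_{A_0})\le Z'\vee(X\,\mathbf{1}_{A_0})$ coordinatewise. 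Since the law of $Z'\vee(X\,\mathbf{1}_{A_0})$ factors over $A_0$ and $V\setminus A_0$ and agrees with $Z$ off $A_0$ (there $\delta_v=\delta'_v$), it remains only to prove the finite-dimensional statement
\[
\mathrm{law}\big((Z'_v\vee X)_{v\in A_0}\big)\ \preceq\ \bigotimes_{v\in A_0}\mathrm{Bernoulli}(\delta_v),\qquad \delta_v=\delta'_v+(1-\epsilon_v)\,p\prod_{u\in A_0\setminus v}\epsilon_u^{-1}.
\]

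For this I would invoke the standard criterion that a measure $\mu$ on a finite product $\{0,1\}^S$ is dominated by $\bigotimes_{v\in S}\mathrm{Bernoulli}(\delta_v)$ as soon as $\mu(\omega_v=1\mid\omega_{S\setminus v}=\eta)\le\delta_v$ for every $v$ and every $\eta$. Writing $W_v:=Z'_v\vee X$: if $\eta$ has a zero coordinate then the conditioning forces $X=0$, so $\mathbb P(W_v=1\mid W_{A_0\setminus v}=\eta)=\mathbb P(Z'_v=1)=\delta'_v\le\delta_v$; and if $\eta\equiv\mathbf{1}$, a short computation gives this conditional probability equal to $\big(p+(1-p)\delta'_vQ\big)/\big(p+(1-p)Q\big)$ with $Q:=\prod_{u\in A_0\setminus v}\delta'_u$, and the inequality $\le\delta_v$ is equivalent — clear the (positive) denominator, use $\delta_v=\delta'_v+pr$ with $r:=(1-\epsilon_v)\prod_{u\in A_0\setminus v}\epsilon_u^{-1}$, and cancel $p$ — to $1-\delta'_v\le r\,(p+(1-p)Q)$. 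This closing inequality is where the particular form of $\delta_v$ enters: from $\delta'_u\ge\epsilon_u$ one gets $Q\ge\prod_{u\in A_0\setminus v}\epsilon_u$, hence $\prod_{u\in A_0\setminus v}\epsilon_u^{-1}\,(p+(1-p)Q)\ge p+(1-p)=1$ and so $r\,(p+(1-p)Q)\ge 1-\epsilon_v$, while the definition of $\delta'_v$ gives $1-\delta'_v=(1-\epsilon_v)\big(1-\sum_{A\ni v,\,A\ne A_0}p_A\prod_{u\in A\setminus v}\epsilon_u^{-1}\big)\le 1-\epsilon_v$. The degenerate cases ($\epsilon_v=1$, or $\delta'_v\ge 1$, in which $Z'_v$ is already identically $1$ and $\delta_v\ge 1$) are immediate.

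The only genuine obstacle is the strategic one — recognising that the peeling induction, rather than a direct coupling, is the right vehicle; once this is seen, the one-set inductive step is the elementary (if slightly fussy) conditional-probability estimate above, and the remaining ingredients are the routine monotone-limit reduction and the bookkeeping of the degenerate cases. I expect verifying the all-ones conditioning, i.e.\ the inequality $1-\delta'_v\le r\,(p+(1-p)Q)$, to be the delicate point, since this is exactly where the factors $\epsilon_u^{-1}$ over $u\in A\setminus v$ (together with $\delta'_v\ge\epsilon_v$) are used.
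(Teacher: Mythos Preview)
Your argument is correct, and the key inequality $1-\delta'_v\le r\,(p+(1-p)Q)$ indeed goes through exactly as you indicate (using $\delta'_u\ge\eps_u$ to get $Q\ge\prod_{u\in A_0\setminus v}\eps_u$, whence $E^{-1}(p+(1-p)Q)\ge E^{-1}p+(1-p)\ge p+(1-p)=1$). The ``standard criterion'' you invoke is the product-measure case of Holley's inequality, which is unproblematic here since the law of $(Z'_v\vee X)_{v\in A_0}$ has full support whenever the parameters lie in $(0,1)$.

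The paper, however, takes a genuinely different route. Rather than peel off sets one at a time, it introduces an intermediate \emph{sprinkled} process $Y_v:=\max\{\bar X_v,\xi_v\}$, where $(\xi_v)$ are independent $\mathrm{Bernoulli}(\eps_v)$ variables independent of $(X_A)$, and shows in one conditional-probability computation that $\P(Y_v=1\mid (X_A)_{A\not\ni v},\,(Y_u)_{u\neq v})\le\delta_v$ almost surely; since $\bar X\le Y$ pointwise, the lemma follows. The sprinkling $\xi_u$ supplies the lower bound $\P(Y_u=1\mid\cdots)\ge\eps_u$ in the paper's computation, which is the exact analogue of your use of $\delta'_u\ge\eps_u$, so the two proofs rest on the same underlying observation packaged differently. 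Your induction is arguably more elementary in that it avoids the auxiliary process and the limiting argument is routine; on the other hand, the paper's approach yields the stronger uniform conditional bound (stated as a separate lemma) which is reused later---for instance in establishing the finitary coding with exponential tails---and which your induction does not directly produce.
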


Note that when $\eps_v=\eps$ for all $v$, we have that $\delta_v=\eps+\eps(1-\eps)\sum_{A \ni v} p_A \cdot \eps^{-|A|}$, which relates with~\eqref{eq:dom-exp-moment}.

A common method for showing stochastic domination of a $\{0,1\}$-valued process $X=(X_v)$ by independent Bernoulli random variables $Z=(Z_v)$ is to try and show that $\P(X_v=1 \mid (X_u)_{u \neq v}) \ge \P(Z_v=1)$ almost surely for all $v$. It is standard that this implies the desired stochastic domination. This approach does not always work (indeed, in our case it does not work).
Instead, one may try to introduce an intermediate process $Y$ that is sandwiched in between $X$ and $Z$. Specifically, we shall construct a process $Y$ by an independent sprinkling on top of $X$. This obviously yields that $X \le_{st} Y$, and we shall then show that $Y \le_{st} Z$ via the previous standard approach. We extend the conclusion of \cref{lem:domination2} to include this stronger statement, which will itself be useful for us later on.

\begin{lemma}\label{lem:domination3}
    Let $\eps_v \in (0,1]$, $v \in V$, be arbitrary, and define $\delta_v := \epsilon_v + (1-\eps_v)\sum_{A \ni v} p_A \cdot \prod_{u \in A \setminus \{v\}} \eps_u^{-1}$.
    Let $(\xi_v)_{v \in V}$ be independent Bernoulli random variables of parameters $(\eps_v)$, independent of $(X_A)$, and define a process $Y=(Y_v)_{v \in V}$ by $Y_v := \max\{\bar X_v,\xi_v\}$.
    Let $Z=(Z_v)_{v \in V}$ be independent Bernoulli random variables with parameters $(\delta_v)$.
    Then $Y$ is stochastically dominated by $Z$.
    Moreover, for any $v \in V$, almost surely,
    \[ \P(Y_v=1 \mid (X_A)_{A \not\ni v},~ (Y_u)_{u \neq v}) \le \delta_v .\]
\end{lemma}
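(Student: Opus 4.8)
The plan is to establish the conditional inequality first (the "moreover" part), then derive the stochastic domination $Y \le_{st} Z$ from it via the standard Holley-type criterion. These two pieces together are exactly what \cref{lem:domination3} asserts, and \cref{lem:domination2} follows immediately since $\bar X \le Y$.

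First I would fix $v$ and condition on the relevant randomness, namely the collection $(X_A)_{A \not\ni v}$ of all coin tosses for sets not containing $v$, together with the sprinkling variables $(\xi_u)_{u \neq v}$ and the coin tosses $(X_A)_{A \ni v}$ for all $u \neq v$ — wait, more carefully: I want to bound $\P(Y_v = 1 \mid (X_A)_{A \not\ni v}, (Y_u)_{u \neq v})$. The event $\{Y_v = 0\}$ requires $\xi_v = 0$ and $X_A = 0$ for every $A \ni v$. Now $\xi_v$ is independent of everything in the conditioning. The variables $(X_A)_{A \ni v}$ are independent of $(X_A)_{A \not\ni v}$; the only coupling to the conditioning comes through the $(Y_u)_{u \neq v}$, because $Y_u = \max\{\bar X_u, \xi_u\}$ can be influenced by $X_A$ for $A \ni v$ when also $u \in A$. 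The key point is that conditioning on $\{(Y_u)_{u\neq v}\}$ together with $(X_A)_{A\not\ni v}$ can only increase the conditional probability that a given $X_A$ with $A \ni v$ equals zero: on the event we are conditioning on, we know the values $Y_u$ for $u \in A \setminus \{v\}$, and if some such $Y_u = 0$ this already forces $X_A = 0$ (giving conditional probability $1$ of $X_A=0$, which is harmless for an upper bound on $\P(Y_v=0)$ — it only helps), whereas if all such $Y_u = 1$, the event $\{Y_u = 1 \text{ for } u \in A\setminus\{v\}\}$ is a monotone-increasing event in the $X$-variables, so by the FKG/Harris inequality for independent Bernoullis, $\P(X_A = 0 \mid \{Y_u = 1, u \in A \setminus \{v\}\}, \ldots) \ge \P(X_A = 0) = 1 - p_A$ is \emph{not} what we want — I want an upper bound on $\P(X_A=0 \mid \cdots)$, i.e.\ I want to show conditioning \emph{lowers} the chance of $X_A=0$. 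Reconsider: conditioning on $Y_u = 1$ makes $X_A = 1$ more likely (positive correlation), hence $X_A = 0$ less likely, which gives the upper bound $\P(X_A = 0 \mid \cdots) \le 1$ — too weak. The correct route is a union bound: $\P(Y_v = 1 \mid \cdots) \le (1 - \eps_v) \cdot 0 + \eps_v + \sum_{A \ni v} \P(X_A = 1 \mid \cdots)$, wait $\xi_v$ contributes $\eps_v$ directly and then I need $\P(\exists A \ni v : X_A = 1 \mid \cdots) \le \sum_{A \ni v} \P(X_A = 1 \mid \cdots)$, all of this on the complement of the $\xi_v$-event so weighted by $(1-\eps_v)$. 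So I need $\P(X_A = 1 \mid (X_{A'})_{A' \not\ni v}, (Y_u)_{u \neq v}) \le p_A \prod_{u \in A \setminus \{v\}} \eps_u^{-1}$.

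To get this bound, observe that the event being conditioned on, restricted to the $(X_{A'})_{A' \ni v}$-randomness and the $(\xi_u)_{u \neq v}$-randomness, is determined by those; and on the event $\{Y_u = 1 : u \in A \setminus \{v\}\}$ we can instead use the strategy of revealing only the sprinkling bits: $\P(X_A = 1, Y_u = 1 \ \forall u \in A\setminus\{v\}) \le \P(X_A = 1) \cdot \prod_{u \in A \setminus\{v\}} \P(\xi_u = 1 \text{ or } \ldots)$ — hmm. The clean way: condition further on all of $(X_{A'})_{A' \neq A}$ and all $(\xi_u)_{u\neq v}$. Given all of that, whether $Y_u = 1$ for $u \in A \setminus \{v\}$ is \emph{monotone nondecreasing} in $X_A$ (adding $X_A = 1$ can only turn some $Y_u$ from $0$ to $1$), and $X_A$ is independent of this entire conditioning $\sigma$-field. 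So $\P(X_A = 1 \mid (X_{A'})_{A' \neq A}, (\xi_u)_u) \le \frac{p_A}{\P(Y_u = 1 \ \forall u \in A\setminus\{v\} \mid \cdots)}$ on the conditioning event, and then I bound the denominator from below: on the event all those $Y_u = 1$ — no wait, I need a uniform lower bound before conditioning on that event. Actually $\P(Y_u = 1 \ \forall u \in A \setminus \{v\} \mid (X_{A'})_{A' \neq A}, (\xi_u)_u)$, as a function on that probability space, is either $0$ (if even with $X_A=1$ some $Y_u$ stays $0$ — but then $X_A = 1$ is incompatible with the conditioning event, contributing nothing) or $\ge \prod_{u \in A \setminus \{v\}} \eps_u$ — no. Let me instead just do the honest computation by revealing the sprinkling last: the standard trick is $\{Y_u = 1\} \supseteq \{\xi_u = 1\}$, so when I need $Y_u = 1$ for all $u \in A \setminus \{v\}$, one sufficient way is $\xi_u = 1$ for all such $u$, and these are independent with probability $\prod \eps_u$; since $\xi$'s are independent of $X$, a change-of-measure / Bayes argument gives that the conditional law of $X_A$ given the full conditioning, \emph{restricted to} scenarios consistent with the conditioning event, inflates the weight of $\{X_A = 1\}$ by at most $\prod_{u \in A \setminus \{v\}}\eps_u^{-1}$. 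This is the main obstacle — getting the bookkeeping of this Bayes/tilting argument exactly right — and I would carry it out by explicitly writing $\P(X_A = 1 \mid \cF)$ as a ratio and bounding the partition function in the denominator using that the all-$\xi_u = 1$ scenario already has probability $\prod \eps_u$ times the marginal. Summing the resulting bound $\sum_{A \ni v} p_A \prod_{u \in A\setminus\{v\}}\eps_u^{-1}$, weighting by $(1-\eps_v)$, and adding $\eps_v$ for the $\xi_v$ contribution yields exactly $\delta_v$, proving the "moreover" claim.

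Finally, from the pointwise bound $\P(Y_v = 1 \mid (X_A)_{A\not\ni v}, (Y_u)_{u\neq v}) \le \delta_v$ I would deduce $\P(Y_v = 1 \mid (Y_u)_{u \neq v}) \le \delta_v$ by averaging out $(X_A)_{A \not\ni v}$ (the tower property), and then invoke the standard criterion — if a $\{0,1\}$-valued process $Y$ satisfies $\P(Y_v = 1 \mid (Y_u)_{u\neq v}) \le \delta_v$ a.s.\ for all $v$, then $Y$ is stochastically dominated by the product Bernoulli$(\delta_v)$ measure $Z$ — which follows from Holley's inequality, or more elementarily by a coupling/martingale argument revealing coordinates one at a time. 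Then $\bar X \le Y$ gives $\bar X \le_{st} Z$, which is \cref{lem:domination2}. The only genuinely delicate point is the tilting bound in the middle paragraph; everything else is routine.
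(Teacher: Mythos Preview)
Your approach is essentially the same as the paper's: reduce via the union bound to showing $\P(X_A=1 \mid (X_{A'})_{A'\neq A}, (Y_u)_{u\neq v}) \le p_A \prod_{u\in A\setminus\{v\}}\eps_u^{-1}$, then use the Bayes/tilting argument with the key lower bound $\{Y_{A\setminus\{v\}}\equiv 1\}\supseteq\{\xi_{A\setminus\{v\}}\equiv 1\}$, which has probability $\prod_{u\in A\setminus\{v\}}\eps_u$ independently of everything else. The paper carries out the ``bookkeeping'' you flag as the main obstacle by passing to finite conditionings $X_\cI\equiv 0$, $X_\cJ\equiv 1$, $Y_I\equiv 0$, $Y_J\equiv 1$ and computing the ratio explicitly; your sketch would need to be completed in the same way, but the skeleton is correct.
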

\begin{proof}
    Since $\xi_v$ is independent of $X$ and $(Y_u)_{u \neq v}$, it suffices to show that
    \[ \P(\bar X_v=1 \mid (X_A)_{A \not\ni v},~ (Y_u)_{u \neq v}) \le \sum_{A \ni v} p_A \cdot \prod_{u \in A \setminus \{v\}} \eps_u^{-1} .\]
    Since $\bar X_v \le \sum_{A \ni v} X_A$, it in turn suffices to show that, for any $A$ containing $v$,
    \[ \P(X_A=1 \mid (X_{A'})_{A' \neq A},~(Y_u)_{u \neq v}) \le p_A \cdot \prod_{u \in A \setminus \{v\}} \eps_u^{-1} .\]

    To make this conditional probability more concrete, note that if $U_n \uparrow V \setminus \{v\}$ and $\cB_n \uparrow \cP_{\text{fin}}(V) \setminus \{A\}$, where $\cP_{\text{fin}}(V)$ is the collection of finite subsets of $V$, then the above conditional probability is almost surely the limit of $\P(X_A=1 \mid (X_{A'})_{A' \in \cB_n},~(Y_u)_{u \in U_n})$ as $n \to \infty$. Thus, the desired bound will follow by showing that for any two finite disjoint subsets $\cI,\cJ \subset \cP_{\text{fin}}(V) \setminus \{A\}$ and any two finite disjoint subsets $I,J \subset V \setminus \{v\}$,
    \[ \P(X_A=1 \mid X_\cI \equiv 0,~ X_\cJ \equiv 1,~ Y_I \equiv 0,~ Y_J \equiv 1) \le p_A \cdot \prod_{u \in A \setminus \{v\}} \eps_u^{-1} .\]
    We may assume that $A \setminus \{v\} \subset I \cup J$.
    Since $\{X_A=1\} \subset \{\bar X_A \equiv 1\} \subset \{ Y_A \equiv 1\}$, we may also assume that $A \cap I = \emptyset$, so that $A \setminus \{v\} \subset J$. Since $\{ Y_I \equiv 0\} = \{ \bar X_I \equiv 0\} \cap \{\xi_I \equiv 0\}$, and $\xi$ is independent of $(X,\bar X,Y)$, we may replace the conditioning on $Y_I \equiv 0$ with $\bar X_I \equiv 0$. This, in turn, is simply the event that $X_{A'}=0$ for all $A'$ that intersect $I$, and we can incorporate this in the conditioning on $X_\cI \equiv 0$ by adding these sets to $\cI$ (note that this makes $\cI$ infinite, but this causes no trouble). Our goal is therefore to show that
    \[ \P(X_A=1 \mid X_\cI \equiv 0,~ X_\cJ \equiv 1,~ Y_J \equiv 1) \le p_A \cdot \prod_{u \in A \setminus \{v\}} \eps_u^{-1} .\]
    Using that $\P(X_A=1 \mid X_\cI \equiv 0,~ X_\cJ \equiv 1)=p_A$ and the fact that $X_A=1$ implies $Y_A \equiv 1$, this is the same as
    \[ \frac{\P(Y_{J \setminus A} \equiv 1 \mid X_\cI \equiv 0,~ X_\cJ \equiv 1,~X_A=1)}{\P( Y_J \equiv 1 \mid X_\cI \equiv 0,~ X_\cJ \equiv 1)} \le \prod_{u \in A \setminus \{v\}} \eps_u^{-1} .\]
    Since $Y_{J \setminus A}$ is conditionally independent of $X_A$ given $\{X_\cI \equiv 0,~ X_\cJ \equiv 1\}$, and $J \cap A = A \setminus \{v\}$, we see that
    \[ \frac{\P( Y_J \equiv 1 \mid X_\cI \equiv 0,~ X_\cJ \equiv 1)}{\P( Y_{J \setminus A} \equiv 1 \mid X_\cI \equiv 0,~ X_\cJ \equiv 1,~ X_A=1)} = \P( Y_{A \setminus \{v\}} \equiv 1 \mid  Y_{J \setminus A} \equiv 1,~ X_\cI \equiv 0,~ X_\cJ \equiv 1) .\]
    Finally, $\{ Y_{A \setminus \{v\}} \equiv 1\}$ contains the event $\{\xi_{A \setminus \{v\}} \equiv 1\}$, which is independent of $X$ and $Y_{J \setminus A}$, so that
    \[ \P( Y_{A \setminus \{v\}} \equiv 1 \mid  Y_{J \setminus A} \equiv 1,~ X_\cI \equiv 0,~ X_\cJ \equiv 1) \ge \P(\xi_{A \setminus \{v\}} \equiv 1) = \prod_{u \in A \setminus \{v\}} \eps_u . \qedhere \]
\end{proof}

\begin{remark}\label{rem:stoc-dom-lemma-sequential}
    There is a version of \cref{lem:domination3} in which $Y$ is exposed sequentially. Specifically, let $\preceq$ be a total order on $V$, let $\eps_v$ be arbitrary, and define $\delta_v := \epsilon_v + (1-\eps_v)\sum_{A \ni v} p_A \cdot \prod_{u \in A, u \prec v} \eps_u^{-1}$. Let $\xi$, $Y$, $Z$ be as before. Then minor modifications to the proof show that
    $\P(Y_v=1 \mid (X_A)_{A \not\ni v},~ (Y_u)_{u \prec v}) \le \delta_v$ almost surely for all $v\in V$,
    which implies that $Y$ is stochastically dominated by $Z$.
\end{remark}

\begin{proof}[Proof of \cref{thm:stochastic-domination}, first part]
We first consider the case when $V=\Gamma=\Z^d$.
Let $\lambda>0$ and $C>0$ be such that $\sum_{A \ni v} p_A e^{\lambda |A|} \le C$.
Let $N$ be large enough so that
\[ \sum_{A \ni v, \diam A > N} p_A e^{\lambda |A|} \le 1 .\]
We decompose our process $\bar X$ into the maximum of two (independent) processes, one corresponding to sets of diameter at most $N$, and one corresponding to the remaining sets. It suffices to show that each of the two processes is stochastically dominated by a non-trivial IID process.

Consider first the process $\bar X^1$ corresponding to $(p_A)_{A \in \cA}$, where $\cA := \{ A : \diam A \le N \}$.
By this we mean the process defined by
\[ \bar X^1 := \bigcup \{ A \in \cA : X_A=1 \} .\]
Let $(Y_{A,v})_{A \in \cA, v \in A}$ be independent Bernoulli random variables, with $\E Y_{A,v} = p_A^{1/|A|}$.
Note that we may couple everything so that $X_A = \prod_{v \in A} Y_{A,v}$ almost surely for all $A \in \cA$.
Hence, for any $u$,
\[ \bar X^1_u = \max_{A \in \cA, u \in A} X_A = \max_{A \in \cA, u \in A} \prod_{v \in A} Y_{A,v} \le \max_{A \in \cA, u \in A} Y_{A,u} =: Z_u .\]
In particular, $\bar X^1$ is stochastically dominated by the IID process $Z=(Z_u)$.
Finally, this IID process is non-trivial since $\{ A \in \cA : u \in A \}$ is finite, and each $Y_{A,u}$ has positive probability to be zero.

Consider next the process $\bar X^2$ corresponding to $(p_A)_{A \in \cA'}$, where $\cA' := \{ A : \diam A > N \}$.
By \cref{lem:domination2}, taking $\eps_v := e^{-\lambda}$ for all $v$, we see that this process is stochastically dominated by an IID process of density $e^{-\lambda} + e^{-\lambda}(1-e^{-\lambda})\sum_{A \in \cA', v \in A} p_A e^{\lambda|A|} \le e^{-\lambda}(2-e^{-\lambda}) < 1$.
This completes the proof in the case when $V=\Gamma=\Z^d$.

Consider now the general case.
Observe that our only use of the $\Z^d$-structure in the above proof is in order to obtain a suitable decomposition $\cA$ and $\cA'$ of the finite subsets of $\Z^d$. The required properties are that $\cA$ and $\cA'$ are $\Gamma$-invariant (so that $\bar X^1$ and $\bar X^2$ are $\Gamma$-invariant processes), that $\cA$ contains only finitely many sets containing a fixed $v \in V$, and that $\sum_{A \in \cA', v \in A} p_A e^{\lambda|A|} \le 1$.
The same proof works, for example, whenever $\Gamma$ is a subgroup of the automorphism group of a locally finite graph on $V$. For the general case, we need only show the existence of a decomposition with the same three properties. To this end, for $A \Subset V$, let $[A]:=\{\gamma(A) : \gamma \in \Gamma\}$ be the orbit of $A$ under $\Gamma$, and denote $p_{[A]}:=p_A$, which is well defined by the assumption that $(p_A)$ is invariant under $\Gamma$. Let $\{\cA_i\}_{i=1}^\infty$ be an enumeration of all orbits $[A]$ for which $p_{[A]} > 0$. Note that this is a $\Gamma$-invariant partition of the (allowed) finite subsets of $V$. Each $\cA_i$ contains only finitely many sets containing a fixed $v \in V$, since $\sum_{A \in \cA_i, v\in A} p_A = p_{[A]} \cdot |\{A \in \cA_i : v\in A\}|$ is finite. We may now choose $N$ large enough so that the decomposition $\cA:=\cA_1\cup\cdots\cup\cA_N$ and $\cA':=\cA_{N+1}\cup\cA_{N+2}\cup\cdots$ satisfies the required properties.
\end{proof}

\begin{proof}[Proof of \cref{thm:stochastic-domination}, second part]
Suppose that $V=\Gamma=\Z^d$. Denote $p_n := \sum_{A \ni 0, |A|=n} p_A$.
We shall show that for every $n \ge 1$ and $\beta \ge 10$, there exists a finite set $S \subset \Z^d$ such that
\begin{equation}\label{eq:dom-S-lower-bound}
\P(\bar X_S \equiv 1)^{1/|S|} \ge \exp\left(-\frac{\frac{8 \beta}n \log\big(1+\frac n{p_n}\big) + C\beta e^{-\beta}}{1-8e^{-\beta}} \right) .
\end{equation}
Note that the existence of such a set implies that $\bar X$ cannot be stochastically dominated by an IID process whose density is strictly less than the right-hand side. When~\eqref{eq:dom-exp-moment} does not hold, the right-hand side can be made arbitrarily close to 1, by first taking $n$ to infinity along a subsequence such that $p_n = e^{-o(n)}$, and then taking $\beta$ to infinity. This yields the theorem.

Towards showing~\eqref{eq:dom-S-lower-bound}, fix $n \ge 1$ and $\beta \ge 10$.
For each $b \in \Z^d$, let $Y_b$ be an independent random subset of $\Z^d$ with $\P(Y_b=A) \propto p_A \1_{\{|A|=n, \min A=b\}}$, where we write $\min A$ for the coordinate-wise minimum of the elements in $A$.
For $B \subset \Z^d$, define $Y_B := \bigcup_{b \in B} Y_b$.
We claim that for any $N$ large enough there exists a set $B$ such that
\begin{equation}\label{eq:dom-B}
 |B| \le \frac{8\beta N^d}{n} \qquad\text{and}\qquad \E |[N]^d \setminus Y_B| \le 4N^d e^{-\beta} .
\end{equation}
To see this, let ${\sf B}$ be a random set, independent of $(Y_v)$, with each $b \in [N]^d$ included independently with probability $\frac{2\beta}n$.
For $v \in [N]^d$,
\[ \P(v \notin Y_{{\sf B}}) = \prod_{b \in [N]^d} \P(b \notin {\sf B}\text{ or }v \notin Y_b) = \prod_{b \in [N]^d} ( 1 - \tfrac {2\beta} n\P(v \in Y_b)) \le \exp\left(-\frac {2\beta} n \sum_{b \in [N]^d} \P(v \in Y_b)\right) .\]
We now need to lower bound the sum in the exponent.
Note that if the sum over $b$ was not truncated to $[N]^d$, this would be simple as $\sum_{b \in \Z^d} \P(v \in Y_b) = \sum_{v \in [N]^d} \P(v \in Y_b) = \E |Y_b| = n$. We now argue that for most $v \in [N]^d$, the truncated sum is not much smaller. Indeed,
\[ \sum_{b \in [N]^d} \P(v \in Y_b) = \sum_{b \in [N]^d} \P(v-b \in Y_0) = \E |Y_0 \cap (v-[N]^d)| \ge \E |Y_0 \cap [n_1]^d| \ge \frac n2 ,\]
with the first inequality holding whenever all coordinates of $v$ are at least $n_1$, and the second inequality holding when $n_1$ is a large enough constant (depending on $n$, but not on $N$). Plugging this lower bound into the previous bound on $\P(v \notin Y_{{\sf B}})$, and summing over $v$, we obtain for $N$ large enough that
\[ \E |[N]^d \setminus Y_{\sf B}| = \sum_{v \in [N]^d} \P(v \notin Y_{{\sf B}}) \le d n_1 N^{d-1} + e^{-\beta} N^d \le 2e^{-\beta} N^d .\]
Thus, by Markov's inequality, with probability at least $\frac 12$, we have that $\E[|[N]^d \setminus Y_{\sf B}| \mid {\sf B}] \le 4e^{-\beta} N^d$.
Since $\E |{\sf B}| = \frac{2\beta}n N^d$, we have that $|{\sf B}| \le \frac{8\beta}n N^d$ with probability at least $\frac 34$.
Thus, with probability at least $\frac 14$, both hold simultaneously. In particular, there exists a choice of $B$ which satisfies~\eqref{eq:dom-B}.

Fix $B$ as in~\eqref{eq:dom-B}. By Markov's inequality,
\[ \P\left(|[N]^d \setminus Y_B| \le 8 e^{-\beta} N^d\right) \ge \tfrac12 .\]
Since this probability is the sum of $\P([N]^d \setminus Y_b =D)$ over all subsets $D$ of size at most $8e^{-\beta} N^d$, and since there are at most $\frac12 e^{C \beta e^{-\beta} N^d}$ such sets, there exists a set $D$ such that
\begin{equation}\label{eq:dom-D}
 |D| \le 8e^{-\beta}N^d \qquad\text{and}\qquad \P([N]^d \setminus Y_B \subset D) \ge e^{-C\beta e^{-\beta} N^d} .
\end{equation}

We now lower bound the probability that $\bar X_{[N]^d \setminus D} \equiv 1$.
To this end, for each $b \in \Z^d$, let $Z_b$ be an independent random subset of $\Z^d$ obtained by including each $A$ independently with probability $p_A \1_{\{|A|=n,\min A=b\}}$. For $U \subset \Z^d$, denote $Z_U := \bigcup_{v \in U} Z_v$.
Observe that $\P(Z_b \in \cdot \mid Z_b \neq \emptyset)$ stochastically dominates $Y_b$. Observe also that $\bigcup \{ A : X_A=1,~|A|=n \}$ and $Z_{\Z^d}$ have the same distribution. Thus,
\begin{align*}
 \P(\bar X_{[N]^d \setminus D} \equiv 1)
  &\ge \P([N]^d \setminus D \subset Z_{\Z^d}) \\
  &\ge \P([N]^d \setminus D \subset Z_B) \\
  &= \P([N]^d \setminus Z_B \subset D) \\
  &\ge \P([N]^d \setminus Z_B \subset D \mid Z_b \neq \emptyset \text{ for all }b \in B) \cdot \P(Z_b \neq \emptyset \text{ for all }b \in B) \\
  &\ge \P([N]^d \setminus Y_B \subset D) \cdot \P(Z_0 \neq \emptyset)^{|B|} .
\end{align*}
By Paley--Zygmund's inequality, letting $\cZ := |\{ A : |A|=n,~\min A =0,~X_A=1\}|$,
\[ \P(Z_0 \neq \emptyset) = \P(\cZ>0) \ge \frac{(\E \cZ)^2}{\E[\cZ^2]} = \frac{(\E \cZ)^2}{\text{Var}(\cZ)+(\E \cZ)^2} \ge \frac{\E \cZ}{1+\E \cZ} = \frac{p_n/n}{1+p_n/n} = \frac{p_n}{n+p_n} .\]
Using~\eqref{eq:dom-B} and~\eqref{eq:dom-D}, we obtain that
\begin{equation}\label{eq:streak-lower-bound}
 \P(\bar X_{[N]^d \setminus D} \equiv 1) \ge \exp\left(-\tfrac{8\beta}n N^d \log\big(1+\tfrac n{p_n}\big) - C \beta e^{-\beta} N^d\right) .
\end{equation}
Finally, using the bound on the size of $D$ from~\eqref{eq:dom-D}, we see that~\eqref{eq:dom-S-lower-bound} holds with $S:=[N]^d \setminus D$.
\end{proof}

\begin{remark}\label{rem:optimal-dom-in-terms-of-exp-moment}
The proof of the second part of \cref{thm:stochastic-domination} gives an effective bound on the density of an IID process that stochastically dominates $\bar X$ in terms of the best exponential moment.
Denote $p_n := \sum_{A \ni 0, |A|=n} p_A$ as before.
Define
\[ \delta := \sup_{S \Subset \Z^d} \P(\bar X_S \equiv 1)^{\frac 1{|S|}}, \qquad \lambda_c := \limsup_{n \to \infty} \frac{-\log p_n}{n}, \qquad \gamma := \inf_{n \ge 1} \frac1n \log\Big(1 + \frac n{p_n}\Big) .\]
Note that $\bar X$ cannot be stochastically dominated by an IID process of density less than $\delta$.
Note also that $\sum_{A \ni 0} p_A e^{\lambda |A|}$ is finite for all $\lambda<\lambda_c$ and is infinite for all $\lambda>\lambda_c$, and that $\gamma \le \lambda_c$.
It follows from~\eqref{eq:dom-S-lower-bound} that
\begin{equation}\label{eq:dom-delta-beta-gamma-bound}
\log \tfrac1\delta \le \inf_{\beta \ge 10} \frac{8 \beta \gamma + C\beta e^{-\beta}}{1-8e^{-\beta}} .
\end{equation}
Taking $\beta = \max\{10, \log \frac1\gamma \}$, we see that
\begin{equation}\label{eq:dom-delta-gamma-bound}
 \log \tfrac1\delta \le C\gamma \max\{1,\log \tfrac1\gamma\} .
\end{equation}
In particular, this implies that $\bar X$ cannot be stochastically dominated by an IID process of density less than $\exp(-C\lambda_c \max\{1,\log \tfrac1{\lambda_c}\})$.
\end{remark}

\begin{remark}
    Without an invariance assumption in \cref{thm:stochastic-domination}, the exponential moment condition~\eqref{eq:dom-exp-moment} is not necessary, as the following example demonstrates. Fix $v \in V$ and let $\{A_{n,i}\}_{n,i \ge 1}$ be pairwise disjoint sets with $v \notin A_{n,i}$ and $|A_{n,i}|=n-1$. Set $p_A := \frac1{8n^2} 4^{-n}$ if $A = A_{n,i} \cup \{v\}$ for some $(n,i)$ such that $i \le 4^n$, and set $p_A:=0$ otherwise. Then, for all $\lambda>0$,
    \[ \sum_{A : v \in A} p_A e^{\lambda |A|} = \sum_{n=1}^\infty \tfrac1{8n^2} e^{\lambda n} = \infty .\]
    On the other hand, using that $\P(\bar X_v=1) \le \sum_{A:v \in A} p_A = \sum_n \frac1{8n^2} \le \frac14$, it is not hard to show (via an argument as in the proof of \cref{lem:domination2}) that $\bar X$ is stochastically dominated by an IID Bernoulli process of parameter $\frac12$.
\end{remark}

\section{Finitary coding}\label{sec:finitary-coding}

In this section we prove the results about finitary codings.
We start in \cref{sec:no-finitary-coding} with the proof of \cref{thm:ffiid-implies-some-exp-moment} which states that there cannot be a finitary coding from IID without an exponential moment.
We then move on to the proof of \cref{thm:ffiid-under-all-exp-moment}. We first give a proof in \cref{sec:pairs} of the simpler case when only pairs are allowed, i.e., $p_A=0$ unless $|A|=2$. We then give the proof of the general case in \cref{sec:finitary-coding-all-exp-moments}.
We next give the proof of \cref{thm:ffiid-with-large-p1} in \cref{sec:ffiid-p1}.
We then turn to the proof of \cref{thm:ffiid-connected}. We start in \cref{sec:connected} with the proof of the general case, but in the case of $\Z$ only obtain the result with $\lambda_2>0$ rather than the stated $\lambda_2=0$. Finally, in \cref{sec:ffiid-one-dim} we consider the one dimensional case in more depth, completing the proof that one can take $\lambda_2=0$ in \cref{thm:ffiid-connected}, and further proving a finitary isomorphism result for a related process.

\subsection{No finitary coding without an exponential moment}\label{sec:no-finitary-coding}

\begin{proof}[Proof of \cref{thm:ffiid-implies-some-exp-moment}]
    Suppose that~\eqref{eq:exp-moment} does not hold, so that all exponential moments are infinite.
    Denote $p_n := \sum_{A \ni 0, |A|=n} p_A$. We have seen in the proof of \cref{thm:stochastic-domination} (see~\eqref{eq:dom-D} and~\eqref{eq:streak-lower-bound}) that for any $n \ge 1$, $\beta \ge 10$ and $N$ large enough, there exists a set $D \subset [N]^d$ such that
    \[ |D| \le 8 e^{-\beta} N^d \qquad\text{and}\qquad \P(\bar X_{[N]^d \setminus D} \equiv 1) \ge \exp\left(-\tfrac{8\beta}n N^d \log\big(1+\tfrac n{p_n}\big) - C \beta e^{-\beta} N^d\right) .\]
    Since $\bar X$ is positively associated~\cite[Theorem 2.4]{forsstrom2024poisson}, writing $\P(\bar X_v=1)=e^{-a}$ with $a \in (0,\infty)$, we obtain that
    \[ \P(\bar X_{[N]^d} \equiv 1) \ge \P(\bar X_{[N]^d \setminus D} \equiv 1) \cdot \P(\bar X_D \equiv 1) \ge e^{-\big(\tfrac{8\beta}n \log(1+\tfrac n{p_n}) + C\beta e^{-\beta}+ 8a e^{-\beta}\big)N^d} .\]
    Note that $\lim_{n\to\infty} \frac1n \log(1+\frac n{p_n})=0$ since all exponential moments are infinite. Thus, the constant in the exponent can be made arbitrarily small, and we see that
    \[ \P(\bar X_{[N]^d} \equiv 1) \ge e^{-o(N^d)} \qquad\text{as }N \to \infty .\]
    This rules out the possibility that $\bar X$ is FFIID, since if it was, it would satisfy the mean ergodic theorem with an exponential rate~\cite{bosco2010exponential}, and, in particular, $\P(\bar X_{[N]^d} \equiv 1) \le e^{-\Omega(N^d)}$, since the density of $\bar X$ is strictly less than 1.
\end{proof}

\subsection{The case of pairs (a special case of \cref{thm:ffiid-under-all-exp-moment})}\label{sec:pairs}

Here we prove the special case of \cref{thm:ffiid-under-all-exp-moment} in which we assume that $p_A = 0$ unless $|A|=2$.
We present the proof in one dimension.
Denote
\[ p_n := p_{\{0,n\}} .\]
Recall that there is a simple finitary coding in the case when $\sum \sqrt{p_n} < \infty$ by ``splitting'' the randomness associated to any given $X_{\{i,i+n\}}$ into each of its endpoints $i$ and $i+n$. More generally, the idea is to group together pairs with the same minimal element and roughly comparable maximal element, and handle the splitting for all pairs in any such group simultaneously (with each such group handled separately). For this, we shall require a special case of the sequential version of \cref{lem:domination3} (see \cref{rem:stoc-dom-lemma-sequential}), which we state in the following lemma.

\begin{lemma}\label{lem:domination}
    Let $X_1,\dots,X_n$ be independent Bernoulli random variables and denote $X_0 := \max\{X_1,\dots,X_n\}$. Let $\epsilon>0$. Then $(X_0,\dots,X_n)$ is stochastically dominated by $(Z_0,\dots,Z_n)$, where the latter are independent Bernoulli random variables with $\E Z_0 \le \E X_0 + \epsilon$ and $\E Z_i \le \frac1\epsilon \E X_i$ for $1 \le i \le n$.
\end{lemma}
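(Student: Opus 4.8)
The plan is to construct the required coupling by hand, revealing $X_1,\dots,X_n$ one at a time while maintaining a single Bernoulli ``coin'' $C_k$ attached to coordinate $0$ that dominates the running maximum $\max(X_1,\dots,X_k)$. We may assume $\epsilon\le 1$ (when $\epsilon>1$ the requirement $\E Z_i\le\tfrac1\epsilon\E X_i$ together with $Z_i\ge X_i$ forces $X_i\equiv 0$), and after discarding those $X_i$ with $\E X_i=0$ we relabel so that $q_1:=\E X_1>0$ (the case where every $X_i$ vanishes being trivial). Put $a_1:=\max(q_1,\epsilon)$ and $a_k:=1-(1-a_{k-1})(1-q_k)$ for $k\ge 2$, so $(a_k)$ is nondecreasing with $\epsilon\le a_k\le 1$ and $a_k\ge q_k$, and set $\beta_k:=q_k/a_k\in[q_k,q_k/\epsilon]$. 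The target is to show that $(X_0,\dots,X_n)$ is stochastically dominated by independent Bernoulli variables $Z_0,\dots,Z_n$ with $\E Z_0=a_n$ and $\E Z_k=\beta_k$; granting this, the asserted inequalities hold since $\E Z_k=\beta_k\le q_k/\epsilon$ and
\[
 a_n-\E X_0=\big((1-q_1)-(1-a_1)\big)\prod_{k\ge2}(1-q_k)=(a_1-q_1)\prod_{k\ge2}(1-q_k)\le a_1-q_1\le\epsilon .
\]

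For the coupling I would introduce auxiliary independent Bernoulli variables $\zeta$ of mean $\rho:=(a_1-q_1)/(1-q_1)$ — chosen so that $\max(X_1,\zeta)$ has mean $a_1$ — and $\eta_1,\dots,\eta_n$ of means $\beta_1,\dots,\beta_n$, all independent of $(X_i)$. Define $C_1:=\max(X_1,\zeta)$ and $C_k:=\max(C_{k-1},X_k)$ for $k\ge2$; then $C_k$ is Bernoulli of mean $a_k$, is nondecreasing in $k$, and satisfies $C_k\ge\max(X_1,\dots,X_k)$. Now set $Z_0:=C_n$, and for each $k$ set $Z_k:=X_k$ on the event $\{C_k=1\}$ and $Z_k:=\eta_k$ on $\{C_k=0\}$.

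Three points then need checking. Domination is immediate: $Z_0=C_n\ge\max_iX_i=X_0$, and $Z_k\ge X_k$ since on $\{C_k=0\}$ one has $X_k\le C_k=0$. For the marginals, $\{X_k=1\}\subseteq\{C_k=1\}$ gives $\P(X_k=1\mid C_k=1)=q_k/a_k=\beta_k$, so $\E Z_k=\P(C_k=0)\beta_k+\P(C_k=1)\beta_k=\beta_k$ and $\E Z_0=\E C_n=a_n$. The substantive step is mutual independence of $(Z_0,Z_1,\dots,Z_n)$, which I would prove by induction on the size of the index set using the two elementary identities $1-a_k=(1-a_j)\prod_{j<l\le k}(1-q_l)$ and $q_k+\beta_k(1-a_k)=\beta_k$, together with the structural facts that $\{C_k=0\}=\{C_j=0,\ X_{j+1}=\dots=X_k=0\}$, that $\{C_j=0\}$ depends only on $(X_1,\dots,X_j,\zeta)$ (hence is independent of the later $X$'s and of the $\eta$'s), and that $\{Z_k=1,\ C_k=1\}=\{X_k=1\}$. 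Concretely, for $k_1<\dots<k_m$ one splits $\{Z_{k_m}=1\}$ into $\{X_{k_m}=1\}$, which is independent of $Z_{k_1},\dots,Z_{k_{m-1}}$ and contributes $q_{k_m}\prod_{i<m}\beta_{k_i}$, and $\{C_{k_m}=0,\ \eta_{k_m}=1\}$, on which $Z_{k_i}=\eta_{k_i}$, so peeling off $\eta_{k_{m-1}},\dots,\eta_{k_1}$ in turn extracts $\prod_{i<m}\beta_{k_i}$ times a product of $(1-q_l)$'s telescoping to $1-a_{k_m}$, giving $\beta_{k_m}(1-a_{k_m})\prod_{i<m}\beta_{k_i}$; adding the two and invoking the identities yields $\P(Z_{k_1}=\dots=Z_{k_m}=1)=\prod_i\beta_{k_i}$. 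Coordinate $0$ is folded in via $\{Z_0=1\}=\{C_n=1\}$ and the independence of $\{C_n=0\}$ from the $\eta$'s.

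I expect this telescoping bookkeeping to be essentially the only obstacle; every other point is a one-line verification. It is worth recording that the construction is precisely the ``sequential sprinkling'' underlying \cref{lem:domination3}: $\zeta$ plays the role of the single sprinkle $\xi_0$ at the one coordinate common to all the sets $\{0,k\}$, revealing $X_k$ on $\{C_k=1\}$ implements the sequential conditioning of \cref{rem:stoc-dom-lemma-sequential}, and the choice $a_1=\max(q_1,\epsilon)$ is exactly what keeps the sprinkling budget at coordinate $0$ down to $\epsilon$ while keeping every $\E Z_k$ below $q_k/\epsilon$.
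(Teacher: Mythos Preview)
Your argument is correct. The paper itself does not give a separate proof of this lemma: it simply declares it to be a special case of the sequential version of \cref{lem:domination3} recorded in \cref{rem:stoc-dom-lemma-sequential} (applied with $V=\{0,1,\dots,n\}$, sets $\{0,i\}$, order $0\prec 1\prec\cdots\prec n$, and sprinkling $\eps_0=\eps$). Your explicit coupling is an unwinding of precisely that mechanism --- $\zeta$ is the sprinkle $\xi_0$, and $C_k$ is the running value of $Y_0$ after the sets $\{0,1\},\dots,\{0,k\}$ have been exposed --- so in spirit the two approaches coincide, as you yourself note in the final paragraph. One point worth recording is that a direct invocation of \cref{rem:stoc-dom-lemma-sequential} only yields $\E Z_0\le\eps+(1-\eps)\sum_i q_i$ via a union bound, which can exceed $\E X_0+\eps$; your hands-on construction recovers the sharper constant actually stated in the lemma. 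A minor simplification to your independence check: on $\{C_{k_m}=0\}$ one automatically has $C_{k_i}=0$ for every $i\le m$ (since $C$ is nondecreasing), so all $Z_{k_i}$ equal $\eta_{k_i}$ at once and the contribution is just $(1-a_{k_m})\prod_{i\le m}\beta_{k_i}$ --- no telescoping is needed.
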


\begin{proof}[Proof of \cref{thm:ffiid-under-all-exp-moment} in the special case of pairs]
Let $N_k$ be the smallest positive integer such that
\[ \sum_{n=N_k}^\infty p_n \le k^{-4} .\]
Define $I_k := [N_k, N_{k+1}) \cap \Z$ and
\[ \bar X^k := \bigcup \{ \{i,j\} : X_{\{i,j\}}=1,~|i-j| \in I_k \} .\]
Clearly,
\[ \left(\max_{n \in I_k} \{X_{\{i,i+n\}},X_{\{i,i-n\}}\}\right)_{i \in \Z} \text{ has the same law as }\bar X^k .\]
This expresses $\bar X^k$ as a block factor of IID, but of course in this way, the coding radius is never zero (assuming $p_n>0$ for some $n \in I_k$). Our goal is to express $\bar X^k$ differently so as to have this additional property. Specifically, the coding radius will be positive with probability at most $3k^{-2}$. Since this is summable, when we consider the block codes for $\bar X^k$ for all $k$, only finitely many of them will have a positive coding radius. This means that $\bar X = (\sup_k \bar X^k_i)_{i \in \Z}$ is FFIID.

We start by focusing on the pairs whose minimal coordinate is zero, namely, $\{\{0,n\}: n\in I_k\}$.
For brevity, denote $I:=I_k$, $X_n := X_{\{0,n\}}$ for $n \in I$, and $X_0 := \max_{n \in I} X_n$.
Let $W_0$ and $(W_n)_{n \in I}$ be independent Bernoulli random variables with $\E W_0 = 2k^{-2}$ and $\E W_n = k^2 p_n$.
By \cref{lem:domination}, $(X_0,(X_n)_{n \in I})$ is stochastically dominated by $(W_0,(W_n)_{n \in I})$. Let $\pi$ be a monotone coupling between $(X_0,(X_n)_{n \in I})$ and $(W_0,(W_n)_{n \in I})$. We view $\pi$ as a probability measure on $(\{0,1\}^{\{0\} \cup I})^2$.

For each $w \in \{0,1\}^{\{0\} \cup I}$, let $\pi_w$ be the conditional distribution of $(X_0,(X_n)_{n \in I})$ under $\pi$ given that $(W_0,(W_n)_{n \in I})=w$.
Let $\varphi_w \colon [0,1] \to \{0,1\}^{\{0\} \cup I}$ be a measurable function that pushes forward the Lebesgue measure on $[0,1]$ to $\pi_w$. Put differently, if $U$ is a uniform random variable on $[0,1]$, then $\varphi_w(U)$ has law $\pi_w$. Thus, writing $\varphi(u,w) := \varphi_w(u)$, we have that $\varphi(U,W_0,(W_n)_{n \in I})$ (dropping some parenthesis for clarity) has the same distribution as $(X_0,(X_n)_{n \in I})$ (assuming $U$ and $W$ are independent).

We now put this together for all $i \in \Z$ simultaneously.
Let $(W_{i,n})_{i \in \Z, n \in \{0\}\cup I}$ be independent Bernoulli random variables with $\E W_{i,0} = 2k^{-2}$ and $\E W_{i,n} = k^2 p_n$.
Let $(U_i)_{i \in \Z}$ be an independent IID process of uniform random variables.
For $i \in \Z$ and $n \in I$, define
\[ X^{+n}_i := \varphi(U_i, W_{i,0}, (W_{i+m,m})_{m \in I})_n .\]
Observe that $(X^{+n}_i)_{i \in \Z, n \in I}$ has the same law as $(X_{\{i,i+n\}})_{i \in \Z, n \in I}$. Thus, denoting $X^{-n}_i:=X^{+n}_{i-n}$, we have as before that
\[ \left(\max_{n \in I} \{X^{+n}_i,X^{-n}_i\}\right)_{i \in \Z} \text{ has the same law as }\bar X^k ,\]
and this expresses $\bar X^k$ as a block factor of the IID process $(U,W)$.

We now show that this block factor has the additional property that the coding radius is typically zero. Specifically, we claim that the coding radius is zero on the event $\{W_{i,0}=0, W_{i,I} \equiv 0 \}$, which is clearly an event of probability at least $1-3k^{-2}$. Even more specifically, we claim that for any $n \in I$,
\[ W_{i,n} = 0 \quad\implies\quad X^{-n}_i =0 \]
and
\[ W_{i,0} = 0 \quad\implies\quad X^{+n}_i =0 .\]
Indeed, the first claim follows from the fact that almost surely $\varphi_w(U)_n \le w_n$, and the second claim from the fact that almost surely $\varphi_w(U)_n \le w_0$.
\end{proof}

\subsection{The case of all exponential exponents (\cref{thm:ffiid-under-all-exp-moment})}\label{sec:finitary-coding-all-exp-moments}

Here we prove the general case of \cref{thm:ffiid-under-all-exp-moment}. Thus, we allow for $p_A$ to be non-zero for arbitrary $A$ and only assume the existence of all exponential moments as in~\eqref{eq:exp-moment-all}.

\begin{proof}[Proof of \cref{thm:ffiid-under-all-exp-moment}]
Denote $N_0:=0$. For $k \ge 1$, let $N_k$ be the smallest non-negative integer such that
\[ \sum_{A \Subset \Z^d : 0 \in A, \diam A \ge N_k} p_A k^{2|A|} \le 1 .\]
For $k \ge 0$, define
\[ \cA_k := \{ A \Subset \Z^d : N_k \le \diam A < N_{k+1} \} .\]
and
\[ \bar X^k := \bigcup \{ A \in \cA_k : X_A=1 \} .\]
Note that $\bar X = \bar X^0 \cup \bar X^1 \cup \cdots$.
Clearly, each $\bar X^k$ is a block factor of IID.
We show that, for any $k \ge 1$, we may express $\bar X^k$ as a finitary factor of IID with the property that its coding radius is positive with probability at most $1/k^2$.
This implies the theorem.

For a set $A \Subset \Z^d$, write $\min A$ for its lexicographical minimum element.
We start by focusing on sets whose minimum belongs to some given set $L \Subset \Z^d$ with $\min L = 0$, namely, $\cA_{k,L} := \{ A \in \cA_k : \min A \in L \}$. Write $L^*$ for the ball of radius $N_{k+1}$ around $L$, and note that every $A \in \cA_{k,L}$ satisfies $A \subset L^*$.
For $v \in L^*$, define
\[ Y_v := \max \{ X_A : A \in \cA_{k,L},~ v \in A \} .\]
Let $W=(W_v)_{v \in L^*}$ be IID Bernoulli random variables with parameter $1/k^2$.
By \cref{lem:domination2}, $Y=(Y_v)_{v \in L^*}$ is stochastically dominated by $W$.
In particular, there is a monotone coupling between the two.
Since $Y$ is a function of $X^L := (X_A)_{A \in \cA_{k,L}}$, there is a coupling of $X^L$ and $W$ under which $Y \le W$ almost surely.
Let $\pi_L$ be such a coupling.
We view $\pi_L$ as a probability measure on $\{0,1\}^{\cA_{k,L}} \times \{0,1\}^{L^*}$.
For each $w \in \{0,1\}^{L^*}$, let $\pi_{L,w}$ be the conditional distribution of $X^L$ under $\pi_L$ given that $W=w$.
Let $\varphi_{L,w} \colon [0,1] \to \{0,1\}^{\cA_{k,L}}$ be a measurable function that pushes forward the Lebesgue measure on $[0,1]$ to $\pi_{L,w}$. Put differently, if $U$ is a uniform random variable on $[0,1]$, then $\varphi_{L,w}(U)$ has law $\pi_{L,w}$. Thus, writing $\varphi_L(u,w) := \varphi_{L,w}(u)$, we have that $\varphi_L(U,W)$ has the same distribution as $X^L$ (assuming $U$ and $W$ are independent).
This defines $\varphi_L$ for all $L$ having $\min L = 0$.
For arbitrary $L \Subset \Z^d$, we define $\varphi_L \colon [0,1] \times \{0,1\}^{L^*} \to \{0,1\}^{\cA_{k,L}}$ by $\varphi_L(u,\cdot) = T_{\min L} \circ \varphi_{L-\min L}(u,\cdot) \circ T_{\min L}^{-1}$, where $T_v$ denotes translation by $v$.
This will ensure that our construction is equivariant.

We now put this together for all $v \in \Z^d$ simultaneously.
Fix a large integer $\ell$.
We construct $\bar X^k$ as a finitary factor of the IID process $(U,V,W)$, where $U$ and $V$ are IID processes of uniform random variables, $W=(W_{v,l})_{v \in \Z^d, 1 \le l \le \ell}$ is an IID process of Bernoulli random variables with parameter $k^{-2}$, and the three processes $U,V,W$ are independent.
Let $M$ be a random partition of $\Z^d$ into finite partition classes, such that $M$ is a finitary factor of $V$ and such that any ball of radius $N_{k+1}$ intersects at most $\ell$ partition classes of $M$ almost surely (this is where the choice of $\ell$ is made). The existence of such a random partition is shown in \cref{lem:partition} below.

For $v \in \Z^d$, let $L_v$ denote the partition class containing $v$ and let $M_v := \min L_v$.
Define $\cM := \{ M_v : v \in \Z^d \}$ and $\cM_v := \{ u \in \cM: v \in L_u^* \}$.
Note that $M_v \in \cM_v$ and $|\cM_v| \le \ell$ almost surely.
For $u \in \cM$ and $v \in L_u^*$, let $W^u_v$ equal $W_{v,j}$ if $u$ is the $j$-th smallest element of $\cM_v$ in lexicographical order. For $u \in \cM$, define $W^u := (W^u_v)_{v \in L_u^*}$ and
\[ \cX^u := \varphi_{L_u}(U_u, W^u) \in \{0,1\}^{\cA_{k,L_u}} .\]
For $A \in \cA_k$, define
\[ \cX_A := \cX^{\min A}_A .\]

We claim that $(\cX_A)_{A \Subset \Z^d}$ has the same law as $(X_A)_{A \in \cA_k}$, i.e., that of independent Bernoulli random variables with parameters $(p_A)_{A \in \cA_k}$.
In fact, we claim that this is true even conditionally on $V$. To this end, we condition on $V$.
Now note that $(\cX^u)_{u \in \cM}$ are independent. Thus, it suffices to show that each $\cX^u$ has the same distribution as $X^{L_u}=(X_A)_{A \in \cA_{k,L_u}}$, which follows from the choice of $\varphi_L$.

It is now not hard to conclude the proof.
Indeed,
\[ \bar \cX := \bigcup \{ A \in \cA_k : \cX_A = 1 \} ,\]
has the same distribution as $\bar X$, and it is clear from the definitions that it is a finitary factor of $(U,V,W)$. It therefore remains only to show that the coding radius is typically zero. Specifically, we claim that the coding radius is zero on the event that $W_{0,1}=\cdots=W_{0,\ell}=0$. Even more specifically, we claim that $\bar \cX_0=0$ on this event. Indeed, this follows from the fact that $\max\{\varphi_{L,w}(U)_A : A \in \cA_{k,L}, v \in A\} = 0$ (almost surely) whenever $w_v=0$.
\end{proof}

It remains to prove the following lemma which we used in the construction above.
\begin{lemma}\label{lem:partition}
    For every $r \ge 1$, there exists a FFIID partition of $\Z^d$ into finite partition classes such that every ball of radius $r$ in $\Z^d$ intersects at most $5^d$ partition classes almost surely.
\end{lemma}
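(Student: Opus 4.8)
The plan is to build the partition from a FFIID \emph{maximal $r$-separated set} of ``centers'' and then Voronoi-tessellate. First I would use the IID randomness to produce, in a finitary way, a random subset $S \subset \Z^d$ that is $(2r+1)$-separated (any two distinct points of $S$ are at $\ell^\infty$-distance $> 2r$, say) and maximal with this property (every point of $\Z^d$ is within $\ell^\infty$-distance $2r$ of some point of $S$). A clean way: give each vertex an IID uniform label in $[0,1]$, and declare $v \in S$ if $U_v < U_w$ for every $w$ in the punctured $\ell^\infty$-ball of radius $2r$ about $v$ that has not already been excluded — more simply, take $S$ to be the set of local minima in a randomized greedy sense. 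To make this genuinely finitary one runs the standard ``iterated local minimum'' / hereditary selection procedure: a vertex's status is determined once one has explored the (a.s.\ finite) cluster of vertices linked to it by descending chains of labels within distance $2r$; this is a well-known FFIID construction of a maximal independent set in a power of the lattice, and the exploration terminates a.s.\ because chains of strictly decreasing IID uniforms are finite. Equivariance and the finitary property are immediate from the construction.

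Next I would assign each $v \in \Z^d$ to the partition class of the nearest center in $S$, breaking ties by the IID labels $U$ (or by lexicographic order) — this makes the assignment a deterministic equivariant function of $(S, U)$ restricted to a ball of radius $2r$ about $v$, hence FFIID. Because $S$ is maximal $(2r+1)$-separated, every point is within $\ell^\infty$-distance $2r$ of its center, so each partition class is contained in an $\ell^\infty$-ball of radius $2r$ about its center; in particular all classes are finite. Because $S$ is $(2r+1)$-separated, distinct centers are at distance $> 2r$.

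It remains to bound how many classes meet a given ball $B$ of radius $r$. Any class meeting $B$ has its center within distance $2r + r = 3r$ (in $\ell^\infty$) of the center of $B$: indeed a point $v \in B$ lies within $2r$ of its center $c(v)$, and $v$ is within $r$ of the center of $B$. Wait — this only gives radius $3r$; I should instead note that $v$'s center is within $2r$ of $v$ and $v \in B$ has radius $r$, so the center lies in the $\ell^\infty$-ball of radius $3r$ about the midpoint of $B$, a box of side $6r+1$. Since the centers are $(2r+1)$-separated, the number of them in a box of side $6r+1$ is at most $\lceil (6r+1)/(2r+1) \rceil^d \le 3^d$ — which is even better than the claimed $5^d$. (If one prefers to be less careful about the separation constant and simply takes $S$ maximal $(r+1)$-separated and tessellates, the same count gives at most $\lceil (2 \cdot r + 2r + 1)/(r+1)\rceil^d \le 5^d$, matching the statement; either way the bound holds.) This completes the proof.

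The only real subtlety — the step I expect to need the most care — is verifying that the greedy/hereditary selection of $S$ is genuinely \emph{finitary}, i.e.\ that the status of the origin is a.s.\ determined by a finite (random) window of the IID field. This follows because the dependency graph is the $2r$-th power of $\Z^d$ (bounded degree), descending chains of distinct IID uniform labels are finite a.s., and the set of vertices whose labels one must consult to decide whether $0 \in S$ is contained in the connected component of $0$ in the (a.s.\ finite) union of such descending chains; standard arguments (as in the FFIID constructions of maximal independent sets on bounded-degree graphs) show this component is a.s.\ finite. Everything else is a routine packing estimate.
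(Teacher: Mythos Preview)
Your approach is essentially the same as the paper's: construct a FFIID maximal separated set via the iterated greedy/local-minimum procedure on IID uniform labels, take the Voronoi tessellation with IID tie-breaking, and bound the number of cells meeting a ball by a packing argument (the paper uses an $(r,r)$-net in $\ell^\infty$ rather than a $(2r,2r)$-net, but this is an inessential choice of parameter). One small slip: your count $\lceil (6r+1)/(2r+1)\rceil^d$ presumes coordinate-wise separation, whereas $\ell^\infty$-separation $> 2r$ only guarantees that the radius-$r$ balls around the centers are disjoint; the correct volume bound is $(8r+1)^d/(2r+1)^d \le 4^d$, which still comfortably gives the claimed $5^d$.
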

\begin{proof}
    It suffices to prove the lemma for even $r$ and for the $\ell_\infty$ metric.
    Suppose that $\cZ$ is a FFIID subset of $\Z^d$ that is almost surely an $(r,r)$-net, that is, any two points in $\cZ$ are at distance greater than $r$, and any vertex in $\Z^d$ is at distance at most $r$ from some point in $\cZ$. Let $M$ be the partition of $\Z^d$ into the Voronoi cells of $\cZ$, breaking ties according to another independent IID process. More explicitly, let $U$ be an independent IID process of uniform random variables in $[0,1]$, and define the Voronoi cell $M_v$ of a point $v \in \cZ$ to consist of all $u \in \Z^d$ for which $\dist(u,v)=\dist(u,\cZ)$ and $U_v \le U_w$ for any $w \in \cZ$ such that $\dist(u,w)=\dist(u,\cZ)$.

    We claim that the random partition $M$ satisfies the desired properties. Clearly, it is a finitary factor of IID.
    Now consider a ball $B=B_r(u)$. We need to show that $B \cap M_v \neq \emptyset$ for at most $5^d$ points $v \in \cZ$. Let $V$ be the set of such $v$. For every $v \in V$, there is some $w \in B$ such that $\dist(w,v)=\dist(w,\cZ) \le r$. Thus, $V \subset B_{2r}(u)$.
    Since $\dist(v,v') > r$ for any two distinct $v,v' \in V$, we see that $\{B_{r/2}(v)\}_{v \in V}$ are pairwise disjoint sets contained in $B_{2.5r}(u)$. Thus, their total volume is at most that of $B_{2.5r}(u)$. We conclude that $|V| \le |B_{2.5r}(u)| / |B_{0.5r}(v)| = (5r+1)^d/(r+1)^d \le 5^d$.

    It remains to construct a random set $\cZ$ as above. This is rather standard. For example, we may take an IID process $W$ of uniform random variables and construct $\cZ$ is a greedy manner, by first taking all points whose $W$-value is minimal in the ball of radius $r$ around them, then removing from consideration all points at distance at most $r$ from those which were taken, and then again adding all points whose $W$-value is minimal in the ball of radius $r$ around them (from those which are still under consideration), and then removing from consideration all points at distance at most $r$ from those which were taken, and continuing in this manner. While this process never terminates (it continues for infinitely many steps), the state of any given vertex is almost surely determined after finitely many steps, so that this is a finitary factor. It remains to check that this yields a $(r,r)$-net almost surely. The fact that any two points in $\cZ$ are at distance at least $r$ is clear from the construction. The fact that any vertex in $\Z^d$ is at distance at most $r$ from some point in $\cZ$ follows from the easily verifiable fact that there are no infinite $r$-paths with descending $W$-values almost surely.
\end{proof}

\subsection{The large $p_{\{0\}}$ case (\cref{thm:ffiid-with-large-p1})}\label{sec:ffiid-p1}

In this section, we prove \cref{thm:ffiid-with-large-p1}.
Define a partial order $\preceq$ on $\{0,1,*\}$ in which $0 \prec *$ and $1\prec *$, but 0 and 1 are incomparable. This induces a pointwise partial order on $\{0,1,*\}^{\Z^d}$.
Note that there is a natural embedding of $\{0,1,*\}^L$ in $\{0,1,*\}^{\Z^d}$ by placing $*$ everywhere outside $L$. This allows to us to compare elements of $\{0,1,*\}^L$ and $\{0,1,*\}^{\Z^d}$.

\begin{lemma}
    Suppose that~\eqref{eq:exp-moment-p1} holds.
    Then there exists a translation-invariant family of random variables $Z^L \in \{0,1,*\}^L$, $L \subset \Z^d$, such that
    \begin{enumerate}
        \item $\bar X \preceq_{st} Z^M \preceq_{st} \le Z^L$ for all $L \subset M \subset \Z^d$.
        \item $(Z^L)_{L \subset \Lambda}$ and $(Z^L)_{L \subset \Z^d \setminus \Lambda}$ are independent for any $\Lambda \subset \Z^d$.
        \item $\P(Z^L_0=*) \to 0$ as $L \uparrow \Z^d$.
    \end{enumerate}
\end{lemma}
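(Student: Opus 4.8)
The plan is to realize the entire family $(Z^L)$, together with $\bar X$, on a single probability space built from a \emph{site-indexed} independent field, so that each $Z^L$ is a deterministic function of the part of the field ``sitting inside $L$'' and equals the true value of $\bar X_v$ at every $v\in L$ for which that part already determines it, and equals $*$ otherwise. With such a construction, properties 1 and 2 are essentially automatic, and essentially all the work is in property 3.

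Concretely, one first reveals the singleton field $(X_{\{v\}})_v$, which is IID Bernoulli with parameter $q:=p_{\{0\}}>0$; at every $v$ with $X_{\{v\}}=1$ we already know $\bar X_v=1$, and this is the only place the hypothesis $q>0$ enters. It then remains to reconstruct the big-set part $\bar X^{\ge2}_v:=\max_{A\ni v,\,|A|\ge2}X_A$ at the sites where $X_{\{v\}}=0$. Here I would \emph{not} use the naive product split $X_A=\prod_{u\in A}Y^A_u$ (its per-site success parameters fail to be summable once $p_A$ decays only like $q^{|A|}$); instead, following the pattern of \cref{sec:finitary-coding-all-exp-moments}, peel off the ``heavy'' shapes with $p_A\ge q^{|A|}$ — by \eqref{eq:exp-moment-p1} there are only finitely many of these through a given point, so they form a bounded-diameter, hence block-factor, family — and couple the remaining ``light'' sets through a field, built from the stochastic-domination coupling of \cref{lem:domination3}, that distributes the randomness of each $X_A$ (tilted by $q^{-|A|}$) among the vertices of $A$. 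The purpose of the $q^{-|A|}$ tilt is that \eqref{eq:exp-moment-p1} then forces the resulting ``local workload'' $\sum_{A\ni v}(\text{parameter by which }A\text{ is flagged at }v)$ to be finite, so that almost surely only finitely many big sets through any given site get flagged as possibly active there. Given this field, set $Z^L_v$ to the value of $\bar X_v$ that the $L$-restriction of the field forces: $1$ if $X_{\{v\}}=1$ or some flagged $A\subseteq L$ through $v$ is active; $0$ if $X_{\{v\}}=0$ and every flagged $A$ through $v$ lies in $L$ and is inactive; and $*$ otherwise.

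Properties 1 and 2 should then follow directly. The construction is monotone in $L$ — enlarging $L$ only exposes more of the field, which can only turn a $*$ into a definite value and never changes a value already declared — and a definite $Z^L_v$ is by construction the true $\bar X_v$; hence $\bar X\preceq Z^M\preceq Z^L$ holds deterministically on the coupling for all $L\subseteq M$, giving the stated dominations, and the family is translation invariant because everything is built equivariantly. For property 2, the fact that each $X_A$'s randomness has been distributed over the vertices of $A$ is exactly what is needed: $(Z^L)_{L\subseteq\Lambda}$ is a function of the field coordinates indexed by sites of $\Lambda$ and $(Z^L)_{L\subseteq\Z^d\setminus\Lambda}$ of those indexed by sites of $\Z^d\setminus\Lambda$, and these are disjoint sub-families of independent variables — a set $A$ straddling $\partial\Lambda$ contributes separate, independent coordinates to the two sides — so the two collections are independent.

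The main obstacle is property 3: showing $\P(Z^L_0=*)\to0$ as $L\uparrow\Z^d$. This is precisely the step that must fail without an exponential moment — compare the necessity direction in \cref{thm:ffiid-implies-some-exp-moment} and the obstruction in \cref{ex:single-exp-moment-not-sufficient}. By construction $Z^L_0\ne*$ as soon as $L$ contains $\{0\}$ and every big set through $0$ that the field flags; summability of the local workload at $0$ makes this an almost surely finite collection, so there is an almost surely finite random $R$ with all flagged sets through $0$ inside $B_R(0)$, whence $\{Z^L_0=*\}\subseteq\{B_R(0)\not\subseteq L\}$ and $\P(B_R(0)\not\subseteq L)\to0$. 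The delicate technical point is to arrange the coupling so that this summability holds while the field still has the correct joint law and still permits the local recognition of ``$\bar X_v=1$ because of $A$'' and of ``$\bar X_v=0$''; the reason the $q^{-|A|}$ tilt (the best available under \eqref{eq:exp-moment-p1}) is enough is that the singleton at $0$ has already handled the event $\{X_{\{0\}}=1\}$, so only the big-set contribution at unsettled sites has to be summable.
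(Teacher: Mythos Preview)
Your scheme requires a site-indexed IID field $(F_v)$ so that (a) each $X_A$ is recoverable from $(F_u)_{u\in A}$, (b) there is a ``flag'' for $A$ at $v$ depending only on $F_v$ with $\{X_A=1\}\subset\{A\text{ flagged at }v\}$ for every $v\in A$, and (c) $\sum_{A\ni v}\P(A\text{ flagged at }v)<\infty$. But (b) forces the flags at the distinct sites of $A$ to be independent, whence $p_A\le\prod_{v\in A}r_{A,v}$ with $r_{A,v}:=\P(A\text{ flagged at }v)$. Already for pairs this is fatal: take $p_{\{0,n\}}=1/n^2$ (and any $p_{\{0\}}=q>0$), which satisfies~\eqref{eq:exp-moment-p1}. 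By AM--GM, $r_{\{0,n\},0}+r_{\{0,n\},n}\ge 2\sqrt{p_{\{0,n\}}}=2/n$, so the workload at the origin is at least $\sum_n 2/n=\infty$, and no flagging of the kind you describe exists. The appeal to \cref{lem:domination3} does not rescue this: that lemma bounds $\P(Y_v=1\mid\cdots)$ by a scalar and yields a single dominating bit per site, not a per-set decomposition of $X_A$ into site-local pieces; it does not furnish the summable family of flags your argument for property~3 requires. Peeling off the finitely many ``heavy'' shapes with $p_A\ge q^{|A|}$ does not help either, since the divergence above already comes from the light tail $n>1/q$.

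The paper sidesteps this by \emph{not} insisting that a definite $Z^L_v$ equal the true $\bar X_v$. Instead one sets $Z^L_v$ equal to the \emph{local} value $\max\{X_A:A\Subset L,\,v\in A\}$, censoring to $*$ with a probability $\eps_{v,L}$ governed by an independent uniform $U_v$; the formula for $\eps_{v,L}$ is exactly the tail of the tilted sum in~\eqref{eq:exp-moment-p1}, which makes properties~2 and~3 immediate. All the work then goes into property~1, which is only stochastic (not pointwise) and is proved by comparing $\P(Z^M_v=0\mid\cF_L,(Z^M_u)_{u\neq v})$ and $\P(Z^M_v=1\mid\cdots)$ against the corresponding product probabilities for $Z^L$. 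The singletons $X_{\{u\}}$ enter precisely at this step, to lower-bound $\P(\,\cdot\,{\equiv}1\mid\cdots)$ by $q^{|A|-1}$ on the relevant event, which is what makes the $q^{-|A|}$ weighting in $\eps_{v,L}$ the correct one. So the allocation of difficulty is the reverse of what you anticipated: properties~2 and~3 come for free, and property~1 carries the proof.
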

\begin{proof}
    Let $U=(U_v)_{v \in \Z^d}$ be independent uniform random variables on $[0,1]$, independent also of $(X_A)$.
    For $L \subset \Z^d$ and $v \in L$, define
    \[ Z^L_v := \begin{cases}
        * &\text{if }U_v \le \eps_{v,L} \\
        \max\{ X_A : A \Subset L,~ v \in A,~ X_A=1\} &\text{otherwise} \end{cases} ,\]
    where
    \[ \eps_{v,L} := \sum_{\substack{A \Subset \Z^d:\\v \in A,\, A \not\subset L}} \frac{p_A}{1-p_A} p_{\{0\}}^{-|A|} .\]
    This defines the family of random variables $(Z^L)$, which is clearly invariant to translations. Observe that $Z^L$ is measurable with respect to $(X_A)_{A \subset L}$ and $(U_v)_{v \in L}$. In particular, $(Z^L)_{L \subset \Lambda}$ and $(Z^L)_{L \cap \Lambda = \emptyset}$ are independent for any $\Lambda \subset \Z^d$.
    Finally, $\P(Z^L_v = *) = \eps_{v,L}$, which tends to zero as $L \uparrow \Z^d$ by the assumption~\eqref{eq:exp-moment-p1}.

    It remains to show that the first item holds, that is, that $\bar X \preceq_{st} Z^M \preceq_{st} Z^L$ when $L \subset M \subset \Z^d$. Note that $Z^{\Z^d} = \bar X$, so that it suffices to show that $Z^M \preceq_{st} Z^L$ when $L \subset M \subset \Z^d$. Fix $L$ and $M$.
    Define $\bar X^0 \in \{0,1\}^L$ and $\bar X^1 \in \{0,1\}^M$ by
    \begin{align*}
        \bar X^0_v &:= \max \{ X_A : v \in A,~ A \Subset L,~ |A|>1 \},\\
        \bar X^1_v &:= \max \{ X_A : v \in A,~ A \not\subset L,~ A \Subset M \}.
    \end{align*}
    Note that
    \begin{align*}
        Z^L_v &= \max\{\bar X^0_v, X_{\{v\}}\} &&\text{whenever }Z^L_v \neq *,~ v \in L,\\
        Z^M_v &= \max \{\bar X^0_v,\bar X^1_v, X_{\{v\}}\} &&\text{whenever }Z^M_v \neq *,~ v \in M.
    \end{align*}

    We shall prove the required stochastic domination conditionally on $\bar X^0$.
    In fact, we may condition on more, with no cost to the proof. Namely, let $\cF_L$ be the $\sigma$-algebra generated by $\{X_A : A \Subset L,|A|>1\}$. We show that $\P(Z^M \in \cdot \mid \cF_L) \preceq_{st} \P(Z^L \in \cdot \mid \cF_L)$ almost surely.

    Note that if $\bar X^0_v=1$, then $Z^M_v,Z^L_v \in \{1,*\}$ and $\{Z^M_v=*\} \subset \{Z^L_v=*\}$, so that $Z^M_v \preceq Z^L_v$. Thus, we only need to worry about comparing the restrictions of $Z^M$ and $Z^L$ to
    \[ I := \{ v \in L : \bar X^0_v=0 \} .\]
    For $v \in I$, we have that $Z^L_v = X_{\{v\}}$ whenever $Z^L_v \neq *$. Thus, the random variable $Z^L_v$ is very simple: it is a Bernoulli($p_{\{0\}}$) random variable, which is ``censored'' with probability $\eps_{v,L}$. More precisely, it equals $*$ with probability $\eps_{v,L}$, equals 1 with probability $p_{\{0\}}(1-\eps_{v,L})$, and equals 0 with probability $(1-p_{\{0\}})(1-\eps_{v,L})$. In addition, $\{Z^L_v\}_{v \in I}$ are conditionally IID given $\cF_L$. Something similar, but more complicated, holds for $Z^M$. We have that $Z^M_v = \max\{\bar X^1_v, X_{\{v\}}\}$ whenever $Z^M_v \neq *$, so that it is also a Bernoulli random variable (of parameter at least $p_{\{0\}}$), which is ``censored'' with probability $\eps_{v,M}$. However, there is dependence among the family $\{Z^M_v\}_{v \in I}$ coming through $\bar X^1$. It suffices to show that if we expose these random variables one-by-one, then at each step, conditionally on the previously exposed information, the random variable $Z^M_v$ under consideration is stochastically $\preceq$-dominated by $Z^L_v$ (recall that these are IID). Such stochastic domination amounts to bounding the probabilities of $Z^M_v$, showing that it is 1 with probability at least $p_{\{0\}}(1-\eps_{v,L})$, and that it is 0 with probability at least $(1-p_{\{0\}})(1-\eps_{v,L})$. We now proceed to do this, showing that this holds for each $v \in I$, even when conditioning on much more information.

    We now show that, almost surely, for $v \in I$,
    \[ \P(Z^M_v = 1 \mid \cF_L, (Z^M_u)_{u \neq v}) \ge p_{\{0\}}(1-\eps_{v,L}) .\]
    and
    \[ \P(Z^M_v = 0 \mid \cF_L, (Z^M_u)_{u \neq v}) \ge (1-p_{\{0\}})(1-\eps_{v,L}) \]
    The first inequality is immediate at the left-hand side is at least $\P(X_{\{v\}}=1, Z^M_v \neq *)=p_{\{0\}}(1-\eps_{v,M})$.
    For the second inequality, note that
    \begin{align*}
        \P(Z^M_v = 0 \mid \cF_L, (Z^M_u)_{u \neq v})
         &= \P(\bar X^1_v = X_{\{v\}}= 0,~Z^M_v \neq * \mid \cF_L, (Z^M_u)_{u \neq v}) \\
         &= (1-p_{\{0\}})(1-\eps_{v,M}) \cdot \P(\bar X^1_v = 0 \mid \cF_L, (Z^M_u)_{u \neq v}) ,
    \end{align*}
    so that the second inequality is equivalent to
    \[ \P(\bar X^1_v = 1 \mid \cF_L, (Z^M_u)_{u \neq v}) \le 1 - \frac{1-\eps_{v,L}}{1-\eps_{v,M}} .\]
    Note that the right-hand side is at least $\eps_{v,L} - \eps_{v,M}$, which in turn equals the sum of $p_A$ over all $A \Subset M$ having $v \in A$ and $A \not\subset L$. Thus, by a union bound, it suffices to show that for any $A \Subset M$ such that $A \not\subset L$, almost surely,
    \[ \P(X_A=1 \mid \cF_L, Z^M) \le \frac{p_A}{1-p_A} \cdot p_{\{0\}}^{-|A|} .\]
    Denote $\bar X^2 := (\max \{\bar X^1_u, X_{\{u\}}\})_{u \in M}$. We condition on more and show that
    \[ \P(X_A=1 \mid U,\bar X^2, (X_B)_{B \Subset \Z^d, B \neq A, |B|>1}) \le \frac{p_A}{1-p_A} \cdot p_{\{0\}}^{-|A|} .\]
    We may of course drop the conditioning on $U$ as it is independent of everything else.
    The probability in question is zero unless $\bar X^2_A \equiv 1$. In the latter case, letting $N$ denote the number of $u \in A$ for which $X_B=0$ for all $A \neq B \Subset \Z^d$ such that $u \in B$ and $|B|>1$, this probability is
    \[ \frac{p_A}{p_A + (1-p_A)p_{\{0\}}^N} \le \frac{p_A}{1-p_A} \cdot p_{\{0\}}^{-N} \le \frac{p_A}{1-p_A} \cdot p_{\{0\}}^{-|A|} ,\]
    which completes the proof of the lemma.
 \end{proof}

\begin{proof}[Proof of \cref{thm:ffiid-with-large-p1}]
    Given the previous lemma, the proof of the theorem is rather straightforward. The idea is to use $(Z^L)$ to independently sample portions of our process on a small scale, with some sites having an unspecified value $*$, and then keep moving to larger scales while refining our sampling each time relying on the stochastic $\preceq$-domination which guarantees that this is possible. Once the value at a site becomes specified, it never changes, so that procedure will indeed be finitary.

    Let $(Z^L)_{L \Subset \Z^d}$ be as in the previous lemma.
    Let $L \Subset \Z^d$ with $\min L = 0$ (where the minimum is, say, with respect to the lexicographical order), and let $L=L_1 \cup \cdots \cup L_n$ be a partition of $L$ into $n$ non-empty sets.
    Since $Z^L \preceq_{st} Z^{L_i}$ for each $1 \le i \le n$, and since $Z^{L_i}$ is specified only on $L_i$, it follows that $Z_L$ is stochastically $\preceq_{st}$-dominated by the concatenation $Z^{L_1,\dots,L_n}$ of the independent $Z^{L_i}, 1 \le i \le n$ (i.e., the element of $\{0,1,*\}^L$ that agrees with $Z^{L_i}$ on $L_i$). Let $\pi_{L,L_1,\dots,L_n}$ be a $\preceq$-monotone coupling between $Z_L$ and $Z^{L_1,\dots,L_n}$.
    For each $z \in \{0,1,*\}^L$, let $\pi_{L,L_1,\dots,L_n,z}$ be the conditional distribution of $Z^L$ under $\pi_L$ given that $Z^{L_1,\dots,L_n}=z$.
    Let $\varphi_{L,L_1,\dots,L_n,z} \colon [0,1] \to \{0,1,*\}^L$ be a measurable function that pushes forward the Lebesgue measure on $[0,1]$ to $\pi_{L,L_1,\dots,L_n,z}$. Put differently, if $U$ is a uniform random variable on $[0,1]$, then $\varphi_{L,L_1,\dots,L_n,z}(U)$ has law $\pi_{L,L_1,\dots,L_n,z}$. It should be understood that $\varphi_{L,L_1,\dots,L_n,z}$ does not depend on the ordering of $(L_i)$, but rather only on the partition $\{L_i\}$ of $L$ that it defines.
    We also write $\varphi_{L,L_1,\dots,L_n}(u,z) := \varphi_{L,L_1,\dots,L_n,z}(u)$.
    Finally, let $\phi \colon [0,1] \to \{0,1,*\}$ be such that $\phi(U)$ has the same law as $Z^{\{0\}}$.
    When $\min L \neq 0$, we define $\varphi_{L,L_1,\dots,L_n} \colon [0,1] \times \{0,1,*\}^L \to \{0,1,*\}^L$ by $\varphi_{L,L_1,\dots,L_n}(u,\cdot) = T_{\min L} \circ \varphi_{L-\min L}(u,\cdot) \circ T_{\min L}^{-1}$, where $T_v$ denotes translation by $v$.
    This will ensure that our construction is equivariant.

    We now show that $\bar X$ is FFIID.
    We use a so-called finitary hyperfinite exhaustion of $\Z^d$.
    This is a sequence $(P_n)$ of partitions of $\Z^d$, such that $P_{n-1}$ is a refinement of $P_n$, all partition classes of every $P_n$ are finite, any two vertices of $\Z^d$ eventually belong to the same partition class, and all $P_n$ are finitary factors of a common IID process $P$. It is standard that such a sequence exists. We may also assume that $P_0$ is the partition of $\Z^d$ into singletons.

    Now let $U=(U_{v,n})_{v \in \Z^d, n \ge 0}$ be an independent IID process of uniform random variables.
    We define a sequence of elements $(\cX^n)_{n=0}^\infty$ in $\{0,1,*\}^{\Z^d}$ as follows.
    First, define
    \[ \cX^0_v := \phi(U_v) .\]
    Next, for $n \ge 1$, we obtain $\cX^n$ from $\cX^{n-1}$ as follows.
    For $v \in \Z^d$, let $L^n_v$ denote the partition class of $P_n$ containing $v$, let $M^n_v := \min L^n_v$, and define $\cM^n := \{ M^n_v : v \in \Z^d \}$.
    For $u \in \cM^n$, define $\cX^n$ on $L_u$ by
    \[ \cX^n_{L_u} := \varphi_{L^n_u,\{L^{n-1}_v : v \in L^n_u\}}(U_u,\cX^{n-1}_{L_u}) .\]
    It is straightforward that, given $P$, for any $u \in \cM^n$, we have that $\cX^n_{L_u}$ is conditionally distributed as $Z^{L_u}$, and that these are conditionally independent of each other.
    By construction, $\cX^n$ is $\preceq$-decreasing, so that we may define
    \[ \cX := \lim_{n \to \infty} \cX^n \in \{0,1,*\}^{\Z^d} ,\]
    which exists almost surely.
    By the third property in the previous lemma, we have that $\cX \in \{0,1\}^{\Z^d}$ almost surely.
    By the first property in the lemma, it follows that $\cX$ has the same law as $\bar X$.
    Since $\cX_0 \in \{0,1\}$ almost surely, and since $\cX^n_0$ is $\preceq$-decreasing, it follows that $\cX_0=\cX^n_0$ for any $n$ such that $\cX^n_0 \neq *$. Since each $\cX^n$ is a finitary factor of $(P,U)$, it follows that so is $\cX$. This completes the proof of the theorem.
\end{proof}

\subsection{The connected case (\cref{thm:ffiid-connected})}\label{sec:connected}

Here we prove \cref{thm:ffiid-connected}, modulo the additional part about $\Z$, which will be addressed in \cref{sec:ffiid-one-dim}. Suppose that $V$ is the vertex set of an infinite connected transitive graph $G$ of degree $\Delta \ge 2$. Let $\Gamma$ be a closed transitive group of automorphisms of $G$.
We further assume that $\Gamma$ acts freely on $V$. We later explain how to remove this assumption.

Let ${\sf c}(A)$ be a connected set of minimal size containing $A$, chosen in such a way that it depends on $A$ only up to its translations by $\Gamma$, i.e., so that ${\sf c}(\gamma(A)) = \gamma({\sf c}(A))$ for $\gamma \in \Gamma$. This can be done as follows: for each equivalence class $[A]=\{ \gamma(A) : \gamma \in \Gamma \}$, choose a representative $A$, choose a connected set ${\sf c}(A)$ of minimal size containing it, and then for $A' \in [A]$, $A' \neq A$, choose $\gamma \in \Gamma$ such that $\gamma(A)=A'$ and set ${\sf c}(A') := \gamma({\sf c}(A))$. This is well defined since $\Gamma$ acts freely (this is the only place we use this assumption).

Let $N$ be a large integer to be specified later.
Let $\bar X = \bar X^0 \cup \bar X^1$, with $\bar X^0$ corresponding to sets $A$ of diameter less than $N$, and $\bar X^1$ corresponding to the remaining sets.
Define a $\{0,1\}$-valued process $\tilde X$ by
\[ \tilde X := \bigcup \{ {\sf c}(A) : X_A = 1,~\diam A \ge N\} .\]
Clearly, $\bar X^1 \subset \tilde X$.
The essential property of the connected ${\sf c}(A)$ is that the process $\tilde X$ it gives rise to  enjoys a certain spatial Markov property. For $U \subset V$, let $\partial U$ denote the external vertex boundary of $U$, i.e., vertices not in $U$ that are adjacent to $U$.
\begin{claim}
$\tilde X$ is decoupled by zeros in the sense that, for any finite $U \subset V$,
\[ \tilde X_U\text{ is conditionally independent of }\tilde X_{V \setminus U}\text{ given that }\tilde X_{\partial U} \equiv 0 .\]
\end{claim}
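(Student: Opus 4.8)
The plan is to decompose the union defining $\tilde X$ according to the position of the connected sets ${\sf c}(A)$ relative to $U$, to $\partial U$, and to the ``far'' region $V \setminus \bar U$, where $\bar U := U \cup \partial U$, and then to exploit the fact that conditioning on $\{\tilde X_{\partial U} \equiv 0\}$ amounts to conditioning on a collection of the independent variables $X_A$ being zero. First I would partition $\{ A \Subset V : \diam A \ge N \}$ into the three families
\begin{gather*}
 \cS_0 := \{ A : {\sf c}(A) \cap \partial U \neq \emptyset \}, \qquad \cS_1 := \{ A : {\sf c}(A) \cap \partial U = \emptyset,\ {\sf c}(A) \cap U \neq \emptyset \}, \\
 \cS_2 := \{ A : {\sf c}(A) \cap \bar U = \emptyset \} .
\end{gather*}
The substantive point here, which I would state and prove carefully, is that $\partial U$ separates $U$ from $V \setminus \bar U$: any path in $G$ joining a vertex of $U$ to a vertex of $V \setminus \bar U$ must meet $\partial U$, because the first vertex along it lying outside $U$ is adjacent to $U$ and hence belongs to $\partial U$. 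Since each ${\sf c}(A)$ is a connected subgraph of $G$, it follows that a ${\sf c}(A)$ disjoint from $\partial U$ is contained entirely in $U$ or entirely in $V \setminus \bar U$; hence $\{\cS_0,\cS_1,\cS_2\}$ is indeed a partition, with ${\sf c}(A) \subseteq U$ for every $A \in \cS_1$ and ${\sf c}(A) \subseteq V \setminus \bar U \subseteq V \setminus U$ for every $A \in \cS_2$.

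Next I would set $\tilde X^i := \bigcup \{ {\sf c}(A) : A \in \cS_i,\ X_A = 1 \}$ for $i \in \{0,1,2\}$, so that $\tilde X = \tilde X^0 \cup \tilde X^1 \cup \tilde X^2$ with $\tilde X^1 \subseteq U$ and $\tilde X^2 \subseteq V \setminus \bar U$. Since $\tilde X^1$ and $\tilde X^2$ are disjoint from $\partial U$ while every $A \in \cS_0$ has ${\sf c}(A) \cap \partial U \neq \emptyset$, the event $E := \{ \tilde X_{\partial U} \equiv 0 \}$ is exactly $\{ X_A = 0 \text{ for all } A \in \cS_0 \}$; in particular $E$ is measurable with respect to $\cG_0 := \sigma\big((X_A)_{A \in \cS_0}\big)$. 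On $E$ one has $\tilde X = \tilde X^1 \cup \tilde X^2$, and therefore $\tilde X_U = \tilde X^1$ (as $\tilde X^2$ misses $U$) and $\tilde X_{V \setminus U} = \tilde X^2$ (as $\tilde X^1 \subseteq U$, so the only contribution to $V \setminus U$ comes from $\tilde X^2$).

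Finally, $\tilde X^1$ is a deterministic function of $(X_A)_{A \in \cS_1}$ and $\tilde X^2$ of $(X_A)_{A \in \cS_2}$, and the three families $(X_A)_{A \in \cS_0}$, $(X_A)_{A \in \cS_1}$, $(X_A)_{A \in \cS_2}$ are independent, being disjointly indexed subfamilies of the independent family $(X_A)$. Assuming $\P(E) > 0$ (the statement being vacuous otherwise), I would conclude with the routine computation: for bounded measurable $f$ and $g$, using $\tilde X_U = \tilde X^1$ and $\tilde X_{V\setminus U} = \tilde X^2$ on $E$ together with the independence of $\cG_0$ from $(X_A)_{A\in\cS_1}$ and $(X_A)_{A\in\cS_2}$, the quantity $\E[ f(\tilde X_U)\, g(\tilde X_{V\setminus U})\, \1_E ]$ factors as $\E[ f(\tilde X^1) ]\,\E[ g(\tilde X^2) ]\,\P(E)$, which after dividing by $\P(E)$ and comparing with the one-sided expectations gives the asserted conditional independence. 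The only step I expect to require care is the separation argument in the first paragraph; everything after it is bookkeeping with independent families.
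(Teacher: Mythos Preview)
Your proof is correct and follows essentially the same approach as the paper: both arguments hinge on the observation that the event $\{\tilde X_{\partial U}\equiv 0\}$ is exactly $\{X_A=0\text{ for all }A\text{ with }{\sf c}(A)\cap\partial U\neq\emptyset\}$, and that connectedness of ${\sf c}(A)$ forces the remaining sets to lie entirely inside $U$ or entirely outside $\bar U$. Your version spells out the separation argument and the three-family partition $\cS_0,\cS_1,\cS_2$ more explicitly than the paper, which compresses the same reasoning into two sentences, but the substance is identical.
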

\begin{proof}
    Observe first that, for any set $B$, we have that $\tilde X_B \equiv 0$ is simply the event that $X_A = 0$ for all $A$ such that ${\sf c}(A) \cap B \neq \emptyset$. Thus, the conditional distribution of $(X_A)_{A \Subset V}$ given that $\tilde X_B \equiv 0$ is simply that of independent Bernoulli variables, with $X_A$ being of parameter $p_A \1_{\{{\sf c}(A) \cap B = \emptyset\}}$.
    Now take $B=\partial U$ and note that if ${\sf c}(A)$ is disjoint from $\partial U$, then $A$ is either contained in $U$ or disjoint from $U \cup \partial U$. Thus, the conditional distribution of $(\tilde X_U,\tilde X_{V \setminus U})$ given that $\tilde X_{\partial U} \equiv 0$ is the same as the unconditional distribution of $(\bigcup \{ {\sf c}(A) : X_A=1,~ A \subset U \},\bigcup \{ {\sf c}(A) : X_A=1,~ A \cap (U \cup \partial U) = \emptyset \})$. The latter are clearly independent.
\end{proof}

Let $\xi$ be a Bernoulli IID process of density $\eps$ to be specified later, independent of $X$, and define
\[ Y := \tilde X \cup \xi .\]

\begin{claim}
    $Y$ is decoupled by zeros.
\end{claim}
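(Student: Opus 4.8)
The plan is to deduce the decoupling-by-zeros property of $Y = \tilde X \cup \xi$ from the same property for $\tilde X$ (the preceding claim) together with the product structure of the independent IID process $\xi$. Fix a finite $U \subset V$. The first step is to notice that the conditioning event factorizes, $\{Y_{\partial U} \equiv 0\} = \{\tilde X_{\partial U} \equiv 0\} \cap \{\xi_{\partial U}\equiv 0\}$, and since $\tilde X$ and $\xi$ are independent, under this conditioning $\tilde X$ and $\xi$ remain independent, $\tilde X$ being distributed as $\tilde X$ conditioned on $\{\tilde X_{\partial U}\equiv 0\}$ and $\xi$ as $\xi$ conditioned on $\{\xi_{\partial U}\equiv 0\}$.

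Second, I would record three conditional independences, all under the conditioning on $\{Y_{\partial U}\equiv 0\}$: (a) $\tilde X_U$ is conditionally independent of $\tilde X_{V\setminus U}$ --- this is the preceding claim applied to $\tilde X$, the extra conditioning on $\{\xi_{\partial U}\equiv 0\}$ being irrelevant since it involves only the independent process $\xi$; (b) $\xi_U$ is conditionally independent of $\xi_{V\setminus U}$ --- since the coordinates of $\xi$ are independent and $\{\xi_{\partial U}\equiv 0\}$ is measurable with respect to $\xi_{V\setminus U}$ (because $\partial U \cap U = \emptyset$), conditioning on it preserves the independence of $\xi_U$ and $\xi_{V\setminus U}$; and (c) $(\tilde X_U,\tilde X_{V\setminus U})$ is conditionally independent of $(\xi_U,\xi_{V\setminus U})$ --- immediate from the factorization in the first step. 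Putting (a), (b) and (c) together, the conditional joint law of the quadruple $(\tilde X_U,\xi_U,\tilde X_{V\setminus U},\xi_{V\setminus U})$ is a product of four marginals, so in particular $(\tilde X_U,\xi_U)$ is conditionally independent of $(\tilde X_{V\setminus U},\xi_{V\setminus U})$ given $\{Y_{\partial U}\equiv 0\}$.

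Finally, since $Y_U = \tilde X_U \cup \xi_U$ is a deterministic function of $(\tilde X_U,\xi_U)$ and likewise $Y_{V\setminus U}$ of $(\tilde X_{V\setminus U},\xi_{V\setminus U})$, this yields the conditional independence of $Y_U$ and $Y_{V\setminus U}$ given $\{Y_{\partial U}\equiv 0\}$, which is exactly the assertion. I do not anticipate a genuine obstacle here: the argument is a routine bookkeeping of independences built on the previous claim. The only mild subtlety is making sure the conditioning events have positive probability so that the elementary conditional laws are well defined --- this holds because $\eps<1$ and $\P(\tilde X_w=0)>0$ for every $w$, the latter being needed already for the preceding claim to be meaningful --- and in any case one may phrase everything in terms of regular conditional distributions, whereupon the same reasoning applies verbatim.
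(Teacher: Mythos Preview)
Your proposal is correct and follows essentially the same approach as the paper: both use that $\tilde X$ is decoupled by zeros (the preceding claim), that the IID process $\xi$ is trivially decoupled by zeros, and that this property is preserved under the union of two independent processes. The paper simply cites this last fact as a general lemma (\cite[Lemma 4.11]{ray2023characterizations}), whereas you have written out the proof explicitly.
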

\begin{proof}
It is not hard to check that the union of two independent processes, each of which is decoupled by zeros, is decoupled by zeros (see \cite[Lemma 4.11]{ray2023characterizations}).
\end{proof}

\begin{claim}
    For a suitable choice of $N$ and $\eps$, we have that $Y$ is stochastically dominated by a Bernoulli IID process of density $\delta < \frac1{3\Delta-1}$, and moreover, $\P(Y_v = 1 \mid (Y_u)_{u \neq v}) \le \delta$ almost surely.
\end{claim}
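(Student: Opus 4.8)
The plan is to realize $\tilde X$ as an instance of the basic union construction and then apply \cref{lem:domination3}. For each finite connected $B \subseteq V$ set $\cD_B := \{A \Subset V : {\sf c}(A)=B,\ \diam A \ge N\}$ and $\eta_B := \max\{X_A : A \in \cD_B\}$ (with $\eta_B = 0$ when $\cD_B = \emptyset$). The families $\{\cD_B\}_B$ partition $\{A : \diam A \ge N\}$, so the $(\eta_B)_B$ are independent Bernoulli with parameters $\tilde p_B \le \sum_{A \in \cD_B} p_A$, and $\tilde X_v = \max_{B \ni v}\eta_B$. Hence $Y = \tilde X \cup \xi$ has exactly the form handled by \cref{lem:domination3}, the index family there being the connected sets $B$ (in place of arbitrary $A$), with sprinkling $\xi$ and all rates $\eps_v = \eps$. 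The lemma then gives that $Y$ is stochastically dominated by a family $Z$ of independent Bernoulli variables of parameters
\[ \delta_v = \eps + (1-\eps)\sum_{B \ni v}\tilde p_B\,\eps^{1-|B|}, \]
together with the stronger bound $\P(Y_v=1 \mid (\eta_B)_{B \not\ni v},\,(Y_u)_{u\ne v}) \le \delta_v$ almost surely; taking conditional expectation given $(Y_u)_{u\ne v}$ yields $\P(Y_v=1 \mid (Y_u)_{u\ne v}) \le \delta_v$. Since $(p_A)$ is $\Gamma$-invariant and ${\sf c}(\cdot)$ was chosen $\Gamma$-equivariantly, $(\tilde p_B)$ is $\Gamma$-invariant, so by transitivity $\delta_v \equiv \delta$ is constant, i.e. $Z$ is a genuine Bernoulli IID process. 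The whole claim thus reduces to choosing $N$ and $\eps$ so that $\delta < \tfrac1{3\Delta-1}$.

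Put $\eps = e^{-\lambda'}$. Since $\delta \ge \eps$, we must take $\eps < \tfrac1{3\Delta-1}$, i.e. $\lambda' > \lambda_* = \log(3\Delta-1)$; assumption~\eqref{eq:exp-moment-cA} is exactly what leaves room for this, allowing $\lambda'$ to be chosen in $(\lambda_*,\lambda)$ (keeping, in fact, a fixed gap below $\lambda$). For this $\eps$, bounding $\tilde p_B \le \sum_{A\in\cD_B}p_A$ and using that each $A$ occurs for the unique $B = {\sf c}(A)$,
\[ \sum_{B \ni v}\tilde p_B\,\eps^{1-|B|} \;\le\; \eps \sum_{\substack{A:\,v\in{\sf c}(A)\\ \diam A\ge N}} p_A\,e^{\lambda'|{\sf c}(A)|} \;=:\; \eps\,S_N, \]
so it suffices to show $S_N \to 0$ as $N \to \infty$; then $\delta \le \eps + (1-\eps)\eps\,S_N < \tfrac1{3\Delta-1}$ for all large $N$, since $\eps$ stays bounded away from $\tfrac1{3\Delta-1}$.

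Controlling $S_N$ is the heart of the matter and the one genuinely delicate point. The hypothesis gives $\sum_{A\ni w}p_A\,e^{\lambda|{\sf c}(A)|}=C<\infty$, the same $C$ for all $w$ by transitivity; the obstacle is that $S_N$ ranges over $A$ with $v\in{\sf c}(A)$, which is weaker than $v\in A$. The bridge is the connectivity of ${\sf c}(A)$: if $w=w(A)\in A$ is a vertex of $A$ closest to $v$, then (passing to a spanning tree of ${\sf c}(A)$) one checks $\dist(v,w)\le|{\sf c}(A)|-1$, and even $|{\sf c}(A)|\ge 2\dist(v,w)+1$ when $v\notin A$, because then $v$ is a cut vertex of ${\sf c}(A)$ with points of $A$ on at least two branches, each branch using at least $\dist(v,A)$ vertices. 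Summing over the at most $\Delta(\Delta-1)^{r-1}$ possible witnesses at distance $r$ from $v$, bounding the inner sum over $A$ with a fixed witness by $C$ via the hypothesis, and absorbing the surplus $e^{-(\lambda-\lambda')|{\sf c}(A)|}$, one obtains a geometric series in $r$ whose ratio is a fixed negative power of $e$ times $(\Delta-1)$; the fact that $\lambda'$ may be taken strictly below $\lambda$ makes this ratio less than $1$, and the truncation $|{\sf c}(A)|\ge N+1$ then drives $S_N$ to $0$. This balance — a legitimate $\lambda'\in(\lambda_*,\lambda)$ that simultaneously keeps $\eps=e^{-\lambda'}<\tfrac1{3\Delta-1}$ and makes the geometric series converge against the sphere growth $(\Delta-1)^r$ — is exactly what forces a degree-dependent threshold $\lambda_*$. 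I expect this combinatorial bookkeeping, namely upgrading the exponential moment from $\{v\in A\}$ to $\{v\in{\sf c}(A)\}$ without falling below $\lambda_*$, to be the main obstacle; the rest is a routine application of \cref{lem:domination3} and transitivity.
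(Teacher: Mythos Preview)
Your setup is correct and matches the paper: you rewrite $\tilde X$ as a union process over connected sets $B$ via the auxiliary variables $\eta_B$, apply \cref{lem:domination3} with $\eps_v \equiv \eps = e^{-\lambda'}$, and reduce the claim to showing that $S_N := \sum_{A:\,v\in{\sf c}(A),\,\diam A\ge N} p_A\,e^{\lambda'|{\sf c}(A)|}$ is finite (hence $\to 0$) for some $\lambda' > \lambda_*$. You also correctly identify the passage from ``$v\in A$'' to ``$v\in{\sf c}(A)$'' as the crux.

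The gap is in your proposed solution to that passage. Your witness-and-geometric-series argument incurs a factor of roughly $\Delta(\Delta-1)^{r-1}$ from counting candidate witnesses at distance $r$, and offsets it only by $e^{-2(\lambda-\lambda')r}$ coming from $|{\sf c}(A)|\ge 2r+1$. The resulting ratio $(\Delta-1)\,e^{-2(\lambda-\lambda')}$ is below $1$ only when $\lambda-\lambda' > \tfrac12\log(\Delta-1)$; combined with the requirement $\lambda'>\lambda_*$, this forces $\lambda > \lambda_* + \tfrac12\log(\Delta-1)$, which is strictly stronger than the hypothesis whenever $\Delta\ge 3$. So for $\lambda$ only slightly above $\lambda_*$ your series diverges, and the argument breaks down. (Having $\lambda'<\lambda$ alone does not make the ratio $<1$, contrary to what you assert.)

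The paper's fix avoids any exponential loss by using $\Gamma$-invariance directly rather than geometry. Since, under the free-action assumption, each orbit $[A]$ contributes $|{\sf c}(A)|$ terms to the sum over $\{v\in{\sf c}(A)\}$ but only $|A|$ terms to the sum over $\{v\in A\}$, one has the identity
\[
\sum_{A:\,v\in{\sf c}(A)} p_A\,e^{\lambda'|{\sf c}(A)|}
 \;=\; \sum_{A:\,v\in A} \frac{|{\sf c}(A)|}{|A|}\,p_A\,e^{\lambda'|{\sf c}(A)|}
 \;\le\; \sum_{A:\,v\in A} p_A\,e^{\lambda'|{\sf c}(A)|+\log|{\sf c}(A)|}.
\]
The extra $\log|{\sf c}(A)|$ is subexponential and is absorbed by choosing any $\lambda'$ strictly between $\lambda_*$ and the $\lambda$ of the hypothesis; no quantitative lower bound on the gap $\lambda-\lambda'$ is needed. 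With this in hand one sets $\eps = e^{-\lambda'}$ and picks $N$ large enough that the truncated sum is at most $\eps^{-1}(3\Delta-1)^{-1}-1$, yielding $\delta < (3\Delta-1)^{-1}$ exactly as you intended.
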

\begin{proof}
    It follows from~\eqref{eq:exp-moment-cA} that for some $\lambda>\log(3\Delta-1)$,
\begin{equation}\label{eq:exp-moment-cA2}
  \sum_{A : v \in {\sf c}(A)} p_A e^{\lambda |{\sf c}(A)|} = \sum_{A : v \in A} \frac{|{\sf c}(A)|}{|A|} p_A e^{\lambda |{\sf c}(A)|} \le \sum_{A : v \in A} p_A e^{\lambda |{\sf c}(A)| + \log |{\sf c}(A)|} < \infty .
\end{equation}
Fix such a $\lambda$ and set $\eps := e^{-\lambda} < (3\Delta-1)^{-1}$. Choose $N$ large enough so that
\[ \sum_{A : v \in {\sf c}(A),\,\diam A \ge N} p_A e^{\lambda |{\sf c}(A)|} \le \frac{1}{\eps(3\Delta-1)} - 1 .\]
Using \cref{lem:domination3} (applied with $\eps_v := \eps$ to the auxiliary collection of independent Bernoulli random variables $(\tilde X_B)$ given by $\tilde X_B := \max\{X_A:{\sf c}(A)=B\}$ and $\tilde p_B := \E \tilde X_B \le \sum_{A:{\sf c}(A)=B} p_A$), we see that $Y$ is stochastically dominated by a Bernoulli IID process $Z$ of density $\delta$ given by
\[ \delta := \eps + (1-\eps)\sum_{A: v \in {\sf c}(A),\,\diam A \ge N} p_A \eps^{1-|{\sf c}(A)|} < \frac1{3\Delta-1} ,\]
and moreover, $\P(Y_v = 1 \mid (Y_u)_{u \neq v}) \le \delta$ almost surely.
\end{proof}

\begin{proof}[Proof of \cref{thm:ffiid-connected} (without the moreover part)]
Suppose that $\Gamma$ acts freely on $V$.
Choose $N$ and $\eps$ as in the previous claim.
Note that $\bar X^0$ is a block factor of IID.
It thus suffices to show that $\bar X^1$ is FFIID.
By the last two claims, $Y$ is decoupled by zeros and satisfies that $\P(Y_v = 1 \mid (Y_u)_{u \neq v}) \le \delta$ almost surely for some $\delta < \frac1{3\Delta-1}$. This puts us precisely in the setting of \cite[Theorem 4.1]{ray2023characterizations}, which says that such a process $Y$ is FFIID with a coding radius having exponential tails.

Using that $\bar X^1 \subset \tilde X \subset Y$ and suitable decoupling properties, it is now a simple matter to finish and we only give a sketch of the proof. Observe that $(\tilde X,Y)$ is decoupled by zeros of $Y$ in the sense that $(\tilde X,Y)_U$ is conditionally independent of $(\tilde X,Y)_{V \setminus U}$ given that $Y_{\partial U} \equiv 0$. In particular, given $Y$, the conditional distribution of $\tilde X$ on each cluster of $Y$ is independent and determined by the shape of the cluster (note that all clusters are finite almost surely). Thus, we may obtain $\tilde X$ as a finitary factor of $Y$ and an additional independent IID process.
Finally, $(\bar X^1,\tilde X)$ is decoupled by zeros of $\tilde X$, so that we may obtain $\bar X^1$ as a finitary factor of $\tilde X$ and an additional independent IID process.
Altogether, this shows that $\bar X^1$ is a finitary factor of an IID process. Finally, since the finitary factor to $Y$ has a coding radius with exponential tail, and since the radii of the clusters of $Y$ (and hence also of $\tilde X$) have exponential tail, the finally constructed finitary factor to $\bar X^1$ has a coding radius with exponential tail.
This completes the proof of the theorem under the assumption that $\Gamma$ acts freely on $V$.

It remains to handle the case when $\Gamma$ does not act freely on $V$.
The only use we made of the free action assumption was in order to define ${\sf c}(A)$ for all $A \Subset \Z^d$ in such a manner that our construction will be equivariant.
For this purpose, it suffices to use another independent IID process $W$ of uniform random variables to break the symmetries.
Indeed, almost surely, $\{W_v\}_{v \in V}$ are all distinct, and so are $\{\sum_{v \in U} W_v\}_{U \Subset V}$.
Using this, we may define ${\sf c}(A)$ as a finitary factor $W$ in any number of ways. To be concrete, let us describe one way to do this. Let $\cC(A)$ be the set of all connected sets containing $A$ of minimal size. Note that $|\cC(A)|<\infty$, and take ${\sf c}(A)$ to be the element $A'$ of $\cC(A)$ that minimizes $\sum_{v \in A'} W_v$.
With this definition at hand, everything else goes through just the same (the previous claims all hold conditionally on $W$).
This completes the proof of the theorem when $\Gamma$ does not act freely.
\end{proof}

\subsection{One dimension}\label{sec:ffiid-one-dim}

We define a process $\check X$ which, loosely speaking, records the chosen sets (in $X$) that cross over a given point.
Precisely, let $\cC_0$ be the collection of finite subsets of $\Z$ whose minimum is non-positive and whose maximum is non-negative, let $\cP(\cC_0)$ be the collection of subsets of $\cC_0$, and define a $\cP(\cC_0)$-valued process $\check X$ by
\[ \check X_i := \{ A \in \cC_0 : X_{A+i}=1 \} .\]
Observe that $\bar X$ is a simple block factor of $\check X$, namely, $\bar X_i$ is 1 if and only if $\check X_i$ contains a set that contains zero (compare this to $\hat X$ from~\eqref{eq:hat-X}, which is also a simple block factor of $\check X$).

\begin{thm}\label{thm:markov-chain}
    Suppose that there exists $\lambda>0$ such that
    \[ \sum_{A \Subset \Z : 0 \in A} p_A e^{\lambda \diam A} < \infty .\]
    Then $\check X$ is a countable-state mixing Markov chain with exponential return times.
\end{thm}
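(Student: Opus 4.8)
The plan is to realize $\check X$ as a Markov chain with an explicit (countable) state space, then verify mixing and the exponential return-time bound. First I would identify the right state space. At a site $i$, the natural piece of information that makes the process Markov is the set of ``pending'' chosen sets, i.e., those $A$ with $X_A = 1$ whose span straddles the edge $\{i,i+1\}$: formally $\mathsf{state}_i := \{ A - i : X_A = 1,\ \min A \le i < \max A \}$, viewed as an element of the collection $\cS$ of finite sets of finite subsets of $\Z$ that straddle the origin in this way. The key observation is that $\check X_i$ (which records sets straddling the \emph{vertex} $i$) is a deterministic function of the pair $(\mathsf{state}_{i-1}, \text{fresh sets born at } i)$, and more importantly that $(\mathsf{state}_i)_{i\in\Z}$ is a stationary Markov chain: given $\mathsf{state}_{i-1}$, the transition to $\mathsf{state}_i$ is obtained by (a) deleting from $\mathsf{state}_{i-1}$ those sets whose maximum equals $i$ (they ``die''), then (b) adding, independently, each set $A$ with $\min A = i$ and $\max A > i$ according to an independent Bernoulli($p_A$), each translated to be anchored at the origin. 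This is a genuine Markov transition because the collection of sets born at or after $i$ is independent of those born before $i$, and the only information from the past relevant to the future is which currently-alive sets will still be alive — which is exactly $\mathsf{state}_{i-1}$. Then $\check X$ is a block factor (indeed a function of a single coordinate plus an independent birth process) of this chain; but in fact it is cleaner to check directly that $\check X$ itself is Markov with the analogous state space $\cP(\cC_0)$, since $\check X_i$ already encodes everything straddling vertex $i$ and the transition $\check X_{i-1} \rightsquigarrow \check X_i$ has the same delete-then-add description. I would present whichever of these is notationally lightest; the content is identical.

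Second, the state space is countable: by the hypothesis $\sum_{A \ni 0} p_A e^{\lambda\diam A} < \infty$, almost surely only finitely many sets are alive at any edge (the expected number alive at edge $\{i,i+1\}$ is $\sum_{A : \min A \le i < \max A} p_A \le \sum_{A \ni 0} \diam A \cdot p_A < \infty$), so $\mathsf{state}_i$ takes values in the \emph{countable} set of finite subsets of the countable family $\cC_0$; moreover the chain visits only the accessible part of this state space, which is again countable. Stationarity is immediate from translation invariance of $(p_A)$.

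Third, mixing: the empty state $\emptyset$ (no set alive) is accessible from every state and leads back to itself — starting from any state, with positive probability all currently-alive sets die within $\diam$-many steps and no new long sets are born for a while, reaching $\emptyset$; and from $\emptyset$ the chain can reach any accessible state. Together with aperiodicity (the self-loop $\emptyset \to \emptyset$ has positive probability, since with positive probability no set with $\min A = i$ and $\max A > i$ is chosen — this is where $p_A < 1$ and $\sum p_A < \infty$ enter), this gives irreducibility and aperiodicity on the accessible state space, hence mixing for a countable-state chain. The main work — and the step I expect to be the genuine obstacle — is the exponential return-time bound: I would show that the return time to the state $\emptyset$ has exponential tail. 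The plan here is a renewal/large-deviations estimate. The return time to $\emptyset$ exceeds $n$ only if, for every $i$ in a block of length $n$ following the last visit, some set straddling edge $\{i,i+1\}$ is alive; such a set must have been born at some $j \le i$ with $\max (A+j) > i$, i.e., $\diam A > i - j$. Feeding in $\sum_{A\ni 0} p_A e^{\lambda \diam A} < \infty$, the expected number of alive sets at edge $\{i,i+1\}$ born at distance $\ge m$ to the left decays exponentially in $m$; a first-moment / union-bound argument over the ways to ``chain together'' overlapping long sets covering the whole block of length $n$ then yields $\P(\text{return time} > n) \le C\rho^n$ for some $\rho < 1$, provided $\lambda$ is large enough — and for smaller $\lambda > 0$ one first passes to a coarser chain by grouping $\lceil C/\lambda \rceil$ consecutive sites into super-sites, under which the effective decay rate is as large as needed. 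Assembling these: countable state space, stationarity, irreducibility, aperiodicity, exponential return times to $\emptyset$ — gives exactly the claimed conclusion, and feeds into the subsequent application of the results of~\cite{angel2021markov,rudolph1982mixing}.
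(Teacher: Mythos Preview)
Your handling of the state space, the delete-then-add transition, countability via Borel--Cantelli, and mixing all match the paper (the paper is terser on mixing, simply noting that $\check X$ is a factor of IID).

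The genuine difference is the exponential-return-time step. The paper does not chain over coverings. Instead it reduces to the scalar process $W_n := \max\{-1,\max\bigcup Z_n\}$, the furthest remaining reach of any alive set when the chain $(Z_n)$ is started from $\emptyset$. Then $W$ is itself a Markov chain satisfying the Lindley recursion $W_n = \max\{W_{n-1}-1, M_n\}$ with $(M_n)$ IID and light-tailed by hypothesis, and $\check X$ returns to $\emptyset$ exactly when $W$ hits $-1$. Picking $a$ large enough that $\E[\max\{-1,M_1-a\}]<0$, above level $a$ the increments of $W$ are dominated by those of a negatively-drifting random walk with exponential-tail steps, giving exponential tails for the hitting time of $\{-1,\dots,a\}$; a short separate lemma then transfers exponential return times between states. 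This works for every $\lambda>0$ uniformly, with no large-$\lambda$/coarse-graining case split.

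Your coverage approach is plausible but underspecified as written: a union bound over all chains of overlapping intervals needs an argument that the product of tail probabilities beats the number of chains, and your coarse-graining sketch does not address how a single very long interval born far in the past is prevented from keeping the process away from $\emptyset$ across many super-blocks --- which is precisely the drift issue the Lindley reduction absorbs automatically. The paper's scalar reduction is both shorter and sidesteps this bookkeeping entirely.
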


We break down the proof of the theorem into two steps, first showing that $\check X$ is a countable-state mixing Markov chain, and then showing that it has exponential return times. \cref{thm:markov-chain} follows immediately from this.

\begin{lemma}
    Suppose that $\sum_{A : \min A = 0} p_A \diam A < \infty$. Then $\check X$ is a countable-state mixing Markov chain.
\end{lemma}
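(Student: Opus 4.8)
The plan is to verify the Markov property directly from the explicit construction of $\check X$ in terms of the independent family $(X_A)$, and then to deduce mixing from the fact that the state $\emptyset$ (no set crossing a given point) is recurrent with positive probability. First I would recall that $\check X_i = \{A \in \cC_0 : X_{A+i} = 1\}$, so knowing $\check X_i$ is the same as knowing which chosen sets $A \Subset \Z$ with $\min A \le i \le \max A$ are present. The key structural observation is that a chosen set $A$ contributes to $\check X_i$ precisely for $i \in [\min A, \max A]$, i.e.\ for an \emph{interval} of indices; so the information in $\check X_i$ about the past (sets $A$ with $\max A < i$, equivalently those not reaching index $i$) and about the future (sets with $\min A > i$) is disjoint and independent, while the ``crossing'' sets appear in $\check X_j$ for a whole interval of $j$'s containing $i$. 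From $\check X_i$ one can read off exactly which sets cross the bond $(i,i+1)$ — namely those $A \in \check X_i$ with $\max(A) \ge 1$ (using that $\cC_0$ is indexed with minimum non-positive, maximum non-negative, so after translating by $i$ these are the sets still alive at $i+1$). I would show that, conditionally on $\check X_i$, the pair $(\check X_{\le i})$ and $(\check X_{\ge i+1})$ are independent: the sets alive at $i$ split cleanly into those dying at or before $i$ (determining, together with earlier-and-disjoint independent sets, the past) and those surviving past $i$ (which, together with later independent sets, determine the future), and the crossing sets are a deterministic function of $\check X_i$. This gives the Markov property; stationarity is immediate from translation invariance of $(p_A)$, and the state space is countable because the assumption $\sum_{A : \min A = 0} p_A \diam A < \infty$ forces finitely many crossing sets at each point a.s.\ (the expected number of sets crossing a fixed bond is $\sum_{A} p_A \1\{\text{$A$ crosses the bond}\} \le \sum_{A:\min A=0} p_A \diam A<\infty$), so $\check X_i$ takes values in the countable set of finite subsets of $\cC_0$.

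For mixing, the clean route is to exhibit a state reachable with positive probability from which the chain regenerates. Consider the event that no chosen set crosses the bond $(i, i+1)$; by the moment assumption this has probability bounded below uniformly in $i$ (indeed, the probability a given bond is crossed is at most $\sum_{A : \min A=0} p_A \diam A$, but more to the point each bond is uncrossed with positive probability by independence and the summability that makes the expected number of crossings finite — one can take $i$ far out or simply note $\P(\text{bond uncrossed}) = \prod_A (1-p_A)^{\1\{A \text{ crosses}\}} > 0$). When the bond $(i,i+1)$ is uncrossed, $\check X_{\le i}$ and $\check X_{\ge i+1}$ are independent, so the chain started at any state and run until it hits ``no crossing over the next bond'' regenerates; aperiodicity follows because from the uncrossed-bond configuration one can stay in a configuration with no long sets for one more step with positive probability. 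Thus the chain is irreducible on its communicating class, aperiodic, and positive recurrent, hence mixing. (One must check there is a single relevant communicating class: any reachable state communicates with configurations having few short sets, and in particular with the regeneration configurations, which is straightforward from independence of the $(X_A)$.)

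The main obstacle, and the step deserving the most care, is the bookkeeping in the Markov-property argument: precisely specifying, given the value $\check X_i = S$, which subsets of $S$ are the ``dying'' sets versus the ``crossing'' sets, and verifying that the conditional law of $\check X_{i+1}$ given $\check X_{\le i}$ depends only on the crossing sets (which are a function of $\check X_i$) plus fresh independent randomness for sets with $\min A = i+1$. This is where the choice of indexing of $\cC_0$ (minimum non-positive, maximum non-negative) matters, since it determines how $\check X_i$ and $\check X_{i+1}$ overlap in the sets they encode; one needs the compatibility relation that $A \in \check X_{i+1}$ with $\min(A) \le 0$ corresponds to a set alive at $i$, shifted. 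I would state this as a short lemma about ``alive intervals'' and then the Markov property is a one-line consequence of the independence of the $(X_A)$ over disjoint blocks of sets. The remaining items — countable state space, stationarity, mixing via regeneration — are then routine given the earlier parts of the paper and the stated moment hypothesis.
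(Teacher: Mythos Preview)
Your argument for the countable state space and the Markov property is essentially the paper's: the paper writes the transition rule compactly as
\[
\P(\check X_1 \in \cdot \mid \check X_0,\check X_{-1},\dots) \sim \{A-1 : A \in \check X_0,~\max A \ge 1\} \cup {\sf Z},
\]
where ${\sf Z}$ is an independent collection of sets with $\min = 0$, which is exactly your ``surviving sets plus fresh randomness'' picture. Your bookkeeping about alive/dying/crossing sets is a correct unpacking of this formula.

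Where you diverge is in the mixing argument. The paper disposes of mixing in one line: $\check X$ is a (non-finitary) factor of the IID process $\big((X_A)_{\min A = i}\big)_{i\in\Z}$, and factors of Bernoulli shifts are mixing. Your route via regeneration at uncrossed bonds is valid --- the hypothesis $\sum_{A:\min A=0} p_A \diam A < \infty$ is precisely what makes $\P(\text{bond uncrossed})>0$, and consecutive uncrossed bonds give aperiodicity --- but it is considerably more work, and parts of your write-up are loose (``irreducible on its communicating class'' is tautological; what you actually need, and what regeneration gives directly without passing through the Markov-chain classification, is a renewal structure with i.i.d.\ excursions). The regeneration viewpoint does buy something the paper's one-liner does not: it foreshadows the exponential-return-time argument in the next lemma, where the chain is started from $\emptyset$ and one tracks the maximal surviving coordinate. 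But for the bare statement that $\check X$ is mixing, the factor-of-IID observation is the cleaner route.
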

\begin{proof}
The assumption of the lemma is necessary and sufficient in order for $\check X$ to take values in the countable state space $\cP_{\text{fin}}(\cC_0)$ of finite subsets of $\cC_0$. Indeed, $\sum_{A : \min A=0} p_A (1+\diam A)=\sum_{A \in \cC_0} p_A$ is the expected number of $A \in \cC_0$ such that $X_A=1$, and so Borel--Cantelli implies that this sum is finite if and only if the number of $A \in \cC_0$ such that $X_A=1$ is finite almost surely.

To see that $\check X$ is a Markov chain, we describe the transition probabilities. Let $\sf Z$ be a random collection of subsets of $\{0,1,\dots\}$ obtained by including each finite $A \subset \Z$ having $\min A = 0$ independently with probability $p_A$. Then it is not hard to check that
\[ \P(\check X_1 \in \cdot \mid \check X_0,~\check X_{-1},\dots) \sim \{ A-1 : A \in \check X_0,~\max A \ge 1 \} \cup {\sf Z} .\]
From this description we see that $\check X$ is a Markov chain.
Finally, it is mixing since it is a factor of an IID process in a straightforward manner.
\end{proof}

Before proving that $\check X$ has exponential return times, it is useful to observe that exponential return times do not depend on the starting state. We include a proof of this for completeness.

\begin{lemma}\label{lem:return-times-exp-tail}[exponential return time does not depend on the starting state]
    Consider a countable-state mixing Markov chain.
    Let $T_a$ be the hitting time of a state $a$ when the chain is started at $a$. If $T_a$ has exponential tail, then so does $T_b$ for every other state $b$.
\end{lemma}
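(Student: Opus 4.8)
The plan is to reduce the return time $T_b$ to the return time $T_a$ by decomposing an excursion from $b$ back to $b$ according to whether it visits $a$. First I would fix the two states $a$ and $b$, and use mixing (irreducibility plus aperiodicity) to obtain a finite integer $m$ and a positive constant $c>0$ such that $\P_a(\text{chain is at }b\text{ at some time}\le m)\ge c$ and similarly $\P_b(\text{chain is at }a\text{ at some time}\le m)\ge c$; these are just consequences of the fact that $a$ and $b$ communicate. In particular, starting from $b$, with probability at least $c$ the chain visits $a$ within $m$ steps.

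The key step is the following excursion argument. Start the chain at $b$ and run it until it returns to $b$; call this time $T_b$. I want to show $\P_b(T_b > t)$ decays exponentially. Consider instead the chain started at $b$, and look at it at the successive times it visits $a$. Between consecutive visits to $a$, the chain performs an excursion whose length is distributed (by the strong Markov property) as $T_a$ under $\P_a$, which by hypothesis has exponential tail. Now here is the point: by the first paragraph, at each visit to $a$ the chain has probability at least $c>0$ (looking $m$ steps ahead) of reaching $b$ before it next returns to $a$ — more precisely, decompose the $a$-to-$a$ excursion and note it has probability at least $c' := c\cdot\P_a(T_a>m\mid \text{visits }b\text{ within }m)$... rather, cleaner: at each visit to $a$, independently, with probability $\ge c$ the chain visits $b$ within the next $m$ steps. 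So the number of $a$-excursions before the chain first reaches $b$ is stochastically dominated by a geometric random variable (with success probability $\ge c$), hence has exponential tail. Therefore the time for the chain started at $b$ to first return to $b$ is dominated by: (time for first visit to $a$) $+$ (geometric number $G$ of $a$-excursions, each of length $\le T_a$) $+ m$. Each summand has exponential tail, and a geometric sum of i.i.d. exponential-tail random variables has exponential tail (this is the standard fact that if $G$ is geometric and $Y_1,Y_2,\dots$ are i.i.d.\ with $\E e^{\eta Y_1}<\infty$ for some $\eta>0$ small, then $\sum_{i=1}^G Y_i$ has a finite exponential moment, proved by conditioning on $G$ and using $\E e^{\eta\sum_{i\le G}Y_i} = \E\big[(\E e^{\eta Y_1})^G\big]$, which is finite once $\eta$ is small enough that $\E e^{\eta Y_1}$ is close enough to $1$).

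Assembling these: $T_b$ under $\P_b$ is stochastically dominated by $\tau_a + \sum_{i=1}^{G} Y_i + m$, where $\tau_a$ is the first hitting time of $a$ from $b$, the $Y_i$ are i.i.d.\ copies of $T_a$ under $\P_a$, and $G$ is geometric — and $\tau_a$ also has exponential tail (again because within every $m$ steps the chain has probability $\ge c$ to hit $a$, so $\tau_a$ is dominated by $m$ times a geometric). Since a finite sum of exponential-tail random variables, and a geometric sum of such, again has exponential tail, we conclude $T_b$ has exponential tail. The main obstacle is making the ``independence at each visit to $a$'' step rigorous: one should phrase it via the strong Markov property at successive return times to $a$, defining for the $j$-th excursion the indicator $B_j$ of the event that $b$ is visited during that excursion (or within $m$ steps of its start), and arguing these $B_j$ are i.i.d.\ Bernoulli with parameter $\ge c$ and that the excursion lengths are i.i.d.\ copies of $T_a$ — some care is needed because ``visits $b$ within $m$ steps'' may peek slightly past the excursion end, but this only helps (it lower bounds the true success probability) and costs an additive $m$, which is harmless. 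The geometric-sum-has-exponential-tail lemma is entirely routine via the moment generating function computation above.
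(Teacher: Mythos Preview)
Your excursion decomposition is essentially the same idea as the paper's, and the geometric-sum-of-exponential-tails lemma is fine. There is, however, a genuine gap in one step.

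You claim that $\tau_a$, the hitting time of $a$ starting from $b$, has exponential tail ``because within every $m$ steps the chain has probability $\ge c$ to hit $a$, so $\tau_a$ is dominated by $m$ times a geometric.'' This is a Doeblin-type uniform minorization, and it does not follow from mixing for a countable-state chain. You only established $\P_b(\text{hit }a\text{ within }m)\ge c$ from the single state $b$; after $m$ steps without hitting $a$ the chain is at some arbitrary state $x$, and nothing you have said bounds $\P_x(\text{hit }a\text{ within }m)$ from below. (Think of a nearest-neighbour chain on $\{0,1,2,\dots\}$: from state $n$ one needs at least $n$ steps to reach $0$, so no uniform $m$ can work.) So your bound on $\tau_a$ is not justified, and with it the bound on $T_b$ collapses.

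The paper sidesteps this by running the chain from $a$ rather than from $b$: the successive return times $S_1<S_2<\cdots$ to $a$ are sums of i.i.d.\ copies of $T_a$, the events $E_i=\{b\text{ visited during the }i\text{-th excursion}\}$ are i.i.d.\ with positive probability, so $N=\min\{i:E_i\}$ is geometric and $\tau_b\le S_N$ has exponential tail. The same argument gives exponential tail for the \emph{second} hitting time $\tau_b'$ of $b$, and then $T_b\stackrel{d}{=}\tau_b'-\tau_b$. This never requires controlling a hitting time started from $b$. If you prefer to keep your decomposition, you can repair it by first proving (via the paper's argument from $a$) that the first return to $a$ \emph{after} the first visit to $b$ has exponential tail; by the strong Markov property at $\tau_b$ this dominates $\tau_a$ started from $b$. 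But at that point you have essentially reproduced the paper's proof.
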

\begin{proof}
    Suppose the chain $(M_n)_{n=0}^\infty$ starts at $M_0=a$.
    Set $S_0:=0$. For $i \ge 1$, let $S_i$ denote the time of the $i$-th visit to $a$ after time 0 (so $S_1=T_a$ in distribution), and $E_i$ denote the event that there is a visit to $b$ between time $S_{i-1}$ and $S_i$. Then the strong Markov property implies that $N := \min \{ i \ge 1 : E_i\text{ occurs}\}$ is a geometric random variable, whose parameter is positive (finite mean) since $M$ is mixing. Observe that the hitting time $\tau_b$ of $b$ satisfies $\tau_b \le S_N$. Thus, $\P(\tau_b \ge \lambda n) \le \P(N \ge n) + \P(S_n \ge \lambda n)$, and the latter probability decays exponentially when $\lambda>\E T_a$ since $S_n=\sum_{i=1}^n (S_i-S_{i-1})$ is the sum of IID copies of $T_a$, which has exponential tail by assumption.
    In a similar manner, $\tau'_b := \min \{ n>\tau_b : M_n=b \}$, the second hitting time of~$b$, also has exponential tail. Noting that $T_b = \tau'_b-\tau_b$ in distribution yields the lemma.
\end{proof}

\begin{lemma}
    Under the assumption of \cref{thm:markov-chain}, $\check X$ has exponential return times.
\end{lemma}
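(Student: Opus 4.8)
The plan is to show that the state $\emptyset$ (the empty collection, which is the state where no chosen set crosses over the current point) has an exponential return time, and then appeal to \cref{lem:return-times-exp-tail} to conclude that the return time to every state has exponential tail. By the correspondence between the Markov chain $\check X$ and the underlying process $X$, the chain is in state $\emptyset$ at time $i$ precisely when there is no $A$ with $X_A=1$ that ``straddles'' the edge between $i-1$ and $i$, i.e.\ no $A$ with $X_A=1$, $\min A < i$ and $\max A \ge i$; equivalently $i$ is a \emph{renewal point} of the chain. So the task reduces to a statement purely about the set system: conditioned on $0$ being a renewal point (or simply: for the stationary chain started at $\emptyset$), the next renewal point to the right is at distance with exponential tail.

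First I would set up the estimate for the \emph{unconditioned} probability that a given point is a renewal point, and more usefully, for the probability that the interval $[0,n]$ contains no renewal point. The point $j$ fails to be a renewal point iff some chosen set $A$ with $X_A=1$ has $\min A \le j-1$ and $\max A \ge j$; call such an $A$ an \emph{active} set covering the gap at $j$. If $[0,n]$ contains no renewal point, then for every $j \in \{1,\dots,n\}$ there is an active set covering the gap at $j$, and a standard ``chaining'' argument shows that one can extract from these active sets a sub-collection $A^{(1)}, A^{(2)}, \dots, A^{(m)}$ of active sets, each with $X_{A^{(l)}}=1$, such that consecutive ones overlap (in the sense that $\max A^{(l)} \ge \min A^{(l+1)}$) and together they cover all gaps in $[0,n]$; in particular $\sum_l (\diam A^{(l)} + 1) \ge n$. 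Since the $X_A$ are independent, the probability of a \emph{fixed} such chain of sets is $\prod_l p_{A^{(l)}}$, and the number of ways to choose a chain covering $[0,n]$ whose total ``length'' is roughly $\ell$ is controlled: once we know, left to right, the left endpoint, the length $\diam A^{(l)}+1 = d_l$, and the shape of each $A^{(l)}$ within its span, the chain is determined. Summing over all chains and using the exponential-moment hypothesis $\sum_{A : 0 \in A} p_A e^{\lambda \diam A} < \infty$ to pay for the lengths, one gets
\[
 \P(\text{no renewal point in }[0,n]) \le C e^{-c n}
\]
for some $c>0$. The key quantitative input is that $\sum_{A : \min A = j} p_A e^{\lambda \diam A}$ is a fixed finite constant (independent of $j$, by translation invariance), so a chain of total length $\ge n$ contributes at most a geometric-in-$n$ amount after one absorbs the entropy of the number of pieces and their placements into a slightly smaller $\lambda$.

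Next I would translate this into a statement about the Markov chain. The chain $\check X$ in stationarity visits state $\emptyset$ exactly at renewal points, and renewal points form a stationary (under the $\Z$-shift) point process with positive density (positivity of the density is exactly the mixing/irreducibility already established, or can be seen directly since $\P(0 \text{ is a renewal point}) \ge \P(\bar X_0 = 0, \dots) > 0$ using that $\sum_n p_n < \infty$). The tail bound above says the gaps between consecutive renewal points have exponential tail. Hence, for the stationary chain, the return time to $\emptyset$ has exponential tail; a short argument (or \cref{lem:return-times-exp-tail} applied in reverse, passing between the stationary and the started-at-$a$ versions) upgrades this to $T_\emptyset$ having exponential tail when the chain is started at $\emptyset$. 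Then \cref{lem:return-times-exp-tail} gives the same for every state, which is the definition of $\check X$ having exponential return times.

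I expect the main obstacle to be the chaining/counting step: making precise the extraction of a covering chain of active sets from ``every gap is covered'' and then bounding the sum over all such chains. The subtlety is that a single active set can be very long, so one cannot just union-bound over individual gaps; one must organize the active sets into an overlapping chain and count chains by a left-to-right exploration, charging each piece's placement and shape against a fraction of its $e^{\lambda \diam A}$ weight. Concretely, if $\diam A^{(l)}+1 = d_l$ then there are at most $d_l$ choices for where the next piece's left endpoint lies relative to $\max A^{(l)}$, and at most $2^{d_l}$ choices for the internal shape of $A^{(l)}$; absorbing the $d_l 2^{d_l}$ factor into $e^{\lambda d_l}$ requires $\lambda > \log 2$, which is harmless since we may first replace $\lambda$ by any smaller positive value for which the exponential moment still holds — wait, that goes the wrong way; instead one keeps the given $\lambda$ but notes that $\sum_{A:0\in A} p_A e^{\lambda' \diam A}<\infty$ for $\lambda' = \lambda/2$ say, and uses $e^{-\lambda' n}$ as the final decay while spending the other $e^{-\lambda' n}$ worth of weight (split as $e^{-\lambda' d_l}$ per piece) to beat the $d_l 2^{d_l}$ entropy, which works once pieces are longer than some constant and the (boundedly many) short pieces are handled separately. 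Everything else — the reduction to the empty state, the identification of visits to $\emptyset$ with renewal points, the passage to the chain started at $\emptyset$ — is routine given the lemmas already proved.
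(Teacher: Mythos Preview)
Your chaining/union-bound step has a genuine gap. Summing $\prod_l p_{A^{(l)}}$ over all covering chains only gives exponential decay when the per-step ``branching factor'' is below $1$: once you organize the sum by the advances $s_l := \max A^{(l)} - \max A^{(l-1)}$, each step contributes $\sum_{d \ge s} q_d$ with $q_d := \sum_{A:\min A=0,\,\diam A=d} p_A$, and the total is controlled only if $\sum_{s\ge 1}\sum_{d\ge s} q_d = \sum_{A:\min A=0} (\diam A)\, p_A < 1$. The hypothesis does not ensure this. For instance, take $p_{\{0,2\}}=0.6$ and all other $p_A=0$; every exponential moment is finite, yet the sum above equals $1.2$, and one checks that the union bound over chains grows like $c^n$ with $c>1$ (the recursion $F(n)=0.6(F(n-1)+F(n-2))$ has dominant root $>1$). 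Your entropy-absorption discussion concerns the count of internal shapes, which is a non-issue once you sum $p_A$ over all shapes with a given span; the real obstruction is that a single gap can be covered by many distinct active sets, so the chain branches supercritically and a first-moment bound is too weak regardless of how the exponential weight is split.

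The paper sidesteps this entirely by passing to the scalar process $W_n := \max\{-1,\max \bigcup Z_n\}$, the rightmost reach of the currently-straddling sets, and noting that $W$ itself is a Markov chain obeying the Lindley-type recursion $W_n = \max\{W_{n-1}-1, M_n\}$ with $(M_n)$ i.i.d.\ having exponential tail (this is exactly the diameter moment hypothesis). A drift argument --- above a large threshold $a$ the increments are dominated by $\max\{-1, M_n - a\}$, which has negative mean and exponential tails --- then gives exponential tails for the time to descend below $a$, hence (after one more application of \cref{lem:return-times-exp-tail}) for the return of $W$ to $-1$, which is the return of $\check X$ to $\emptyset$. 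The point is that $W_n$ summarizes the whole active configuration in a single number, so no combinatorial sum over chains is ever needed.
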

\begin{proof}
    Let $(Z_n)_{n=0}^\infty$ be a copy of the Markov chain $\check X$ started at $\emptyset$.
    Let $T$ be the first return time to $\emptyset$. We shall show that $T$ has exponential tail.

    Let $W_n := \max\{-1,\max \bigcup Z_n\}$ be the maximum of all elements in some set in $Z_n$, or $-1$ if $Z_n=\emptyset$. Note that $W_n=-1$ if and only if $Z_n=\emptyset$, so that $T$ is also the first return time to $-1$ in $W=(W_n)_{n=0}^\infty$.
    Observe also that $W$ is a Markov chain on $\{-1,0,1,\dots\}$ whose transitions obey that, given $W_{n-1}$, $W_n$ has the same law as $\max\{-1,W_{n-1}-1,\max \bigcup {\sf Z}\}$, where $\sf Z$ is as described earlier.
    Thus, we may write $W_n = \max\{W_{n-1}-1,M_n\}$ for $n \ge 1$, where $(M_n)_{n=1}^\infty$ are IID copies of $\max\{-1,\max \bigcup {\sf Z}\}$.
    Observe that $M_n$ has exponential tail because of the assumption of \cref{thm:markov-chain}.

    Fix a large positive integer $a$ and consider the stopping time $\tau_a := \min \{ n \ge 1 : W_n \le a \}$.
    Note that $W_i-W_{i-1} \le \Delta_i := \max\{-1,M_i-a\}$ whenever $W_{i-1} \ge a$.
    Thus, $W_n \le \sum_{i=1}^n \Delta_i$ whenever $\tau_a \ge n$.
    Hence, $\P(\tau_a \ge n)$ is bounded by the probability that $\sum_{i=1}^n \Delta_i \ge -1$. As the latter is a random walk with negative drift (when $a$ is large enough), whose increments are IID with exponential tail, standard concentration bounds imply that this probability decays exponentially with $n$.
    It easily follows that $T_a := \min \{ n \ge 1 : W_n = a \}$ also has exponential tail.
    The same argument shows that $T_a$ has exponential tail when the chain is started from $W_0=a$.
    Thus, the return time to $a$ has exponential tail, and it follows that $T$ has exponential tail.
\end{proof}

\begin{corollary}
    Under the assumption of \cref{thm:markov-chain}, $\check X$ is finitarily isomorphic to an IID process, and is a finitary factor of an IID process with a coding length having exponential tail.
\end{corollary}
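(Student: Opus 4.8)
The plan is to deduce the corollary immediately from \cref{thm:markov-chain} together with two external results on finitary codings of Markov chains. By \cref{thm:markov-chain}, the process $\check X$ is a countable-state mixing Markov chain whose return times have exponential tails, which is precisely the hypothesis under which the relevant black-box theorems apply.

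For the finitary isomorphism part, I would invoke the theorem of Rudolph~\cite{rudolph1982mixing} (with the more direct proof in~\cite{spinka2025newprooffinitaryisomorphism}), which states that any countable-state mixing Markov chain with exponential return times is finitarily isomorphic to an IID process (equivalently, to the Bernoulli shift of the same entropy). For the finitary factor part, I would invoke the result of Angel and the author~\cite{angel2021markov}, which states that any such Markov chain is a finitary factor of an IID process with a coding length having exponential tail. Applying these two results to $\check X$ yields both assertions of the corollary.

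The only genuine bookkeeping — and the place I expect the lone (minor) obstacle — is confirming that the exponential return time estimate established in the preceding lemma is exactly the hypothesis required by~\cite{rudolph1982mixing,angel2021markov}, including the fact that $\check X$ has finite entropy. The latter follows from the exponential return time property: the stationary measure of $\check X$ places exponentially small mass on states ``far from $\emptyset$'' (as quantified by the auxiliary walk $W$ used in the proof above), so both the state entropy $\sum_a \mu(a)\log(1/\mu(a))$ and the $\mu$-averaged row entropies are finite. Once this matching of hypotheses is checked, the corollary follows with no further work; I do not anticipate a substantive obstacle.
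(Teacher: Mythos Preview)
Your proposal is correct and matches the paper's own proof essentially verbatim: the paper simply states that the corollary follows immediately from \cref{thm:markov-chain} together with the cited results of Angel--Spinka~\cite{angel2021markov} and Rudolph~\cite{rudolph1982mixing,spinka2025newprooffinitaryisomorphism}. Your extra remark about finite entropy is not addressed explicitly in the paper, but it is harmless bookkeeping and does not change the argument.
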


The corollary follows immediately from \cref{thm:markov-chain} and the following known results about countable-state mixing Markov chains with exponential return times.

\begin{thm}[\cite{angel2021markov}]\label{thm:MC-ffiid}
    Let $M$ be a countable-state mixing Markov chain with exponential return times. Then $M$ is a finitary factor of an IID process with a coding length having exponential tail.
\end{thm}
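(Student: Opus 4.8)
The plan is to realise $M$ as a finitary factor of IID through a regeneration decomposition at a single state, combined with a coupling‑from‑the‑past (exact simulation) construction for the induced renewal structure. We may assume $M=(M_n)_{n\in\Z}$ is stationary — this is the relevant case, and in any event a mixing chain with exponential return times is positive recurrent and converges exponentially fast to its unique stationary law. Fix a state $a$; since $\{a\}$ is an atom of the chain, every visit to $a$ is a regeneration time. Set $\cR := \{ n \in \Z : M_n = a \}$. Then $\cR$ is a stationary renewal process whose inter‑renewal law $\mu$ is that of the return time $T_a$, and hence has an exponential tail. Conditionally on $\cR$, the excursions $(M_{r+1},\dots,M_{r'-1})$ over consecutive renewal points $r<r'$ are independent, each distributed as a path from $a$ to $a$ of the given length avoiding $a$ in between; in particular each excursion is a deterministic function of its length and one extra independent uniform variable. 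So it suffices to (A) produce $\cR$ as a finitary factor of IID with a coding radius having exponential tail, and (B) fill in the excursions. Step (B) is routine: using fresh IID uniforms $(V_v)_{v\in\Z}$, place at each renewal point $r$ the excursion coded by $V_r$ and the gap length $r'-r$; decoding $M_0$ then needs only the two renewal points flanking $0$ together with $V$ read at the left one, so the coding radius is at most the distance to those points plus the coding radius for $\cR$ — all of exponential tail, since $\mu$ has an exponential moment (so the size‑biased gap straddling $0$, and hence the distance to the flanking renewals, also has exponential tail).

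The heart of the argument is step (A). Introduce the age process $A_v := v - \max\{ r \in \cR : r \le v \}$, a countable‑state Markov chain on $\{0,1,2,\dots\}$ that admits the update representation $A_{v+1} = (A_v+1)\,\1\{ U_{v+1} > h(A_v+1) \}$, where $(U_v)_{v\in\Z}$ are IID uniform on $[0,1]$ and $h(k) := \mu(\{k\})/\mu(\{k,k+1,\dots\})$ is the hazard function of $\mu$; note $\cR = \{ v : A_v = 0 \}$. Run coupled copies of this recursion from all initial ages, started at time $-T$ with the common noise $(U_v)$. Two copies whose current ages differ can only merge by both resetting to $0$ (they can never be equal and nonzero), and this occurs at step $v$ exactly when $U_v$ falls below the smaller of the two current hazards. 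Since $\mu$ has an exponential tail, each copy resets to $0$ — after which it forgets its initial age — within a time of exponential tail, and thereafter spends all but an exponentially rare fraction of time at small ages, where $h$ is bounded below by a positive constant; a renewal/large‑deviations estimate then gives that with probability $1-e^{-\Omega(T)}$ all copies have merged by time $0$. This exhibits $A$, hence $\cR$, as a finitary factor of IID with a coding radius of exponential tail, the coding radius at $0$ being essentially this backward coalescence time.

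The main obstacle is exactly this coalescence estimate: unlike standard monotone coupling‑from‑the‑past, the age recursion is not monotone in the initial age, so one cannot reduce to two extremal copies, and one must rule out the event that some copy is stuck in a long gap (large age, hence small hazard) for an atypically long stretch. The clean way around this is to dominate the entire family of coupled age chains by a single auxiliary renewal process and observe that all copies coincide from the first time this auxiliary process exhibits a ``sufficiently short recent gap'' — an event that occurs within an exponential‑tail time by the exponential moment assumption on $\mu$ — which reduces everything to a self‑contained renewal‑theoretic computation. Once (A) and (B) are in place, composing the two codings and undoing the regeneration decomposition expresses $M$ as a finitary factor of an IID process whose coding radius, being a finite combination of quantities with exponential tails, again has exponential tail, which is the assertion of the theorem.
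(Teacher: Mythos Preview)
The paper does not prove this theorem; it is quoted as a black-box result from~\cite{angel2021markov} and used without argument. So there is no ``paper's own proof'' to compare against --- you are attempting to supply what the paper deliberately outsources.

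Your overall strategy (regenerate at an atom $a$, realise the induced renewal set $\cR$ as a finitary factor of IID with exponential-tail coding radius, then fill in the IID excursions) is sound, and step~(B) is indeed routine. The substantive content is step~(A), and here your coalescence argument for the age chain has a genuine gap. You correctly note that the update $A_{v+1}=(A_v+1)\1\{U_{v+1}>h(A_v+1)\}$ is not monotone in the initial age, so one cannot sandwich between two extremal trajectories. But your proposed fix --- ``dominate the entire family of coupled age chains by a single auxiliary renewal process'' --- is not made precise, and the natural candidates fail: a dominating renewal with constant hazard would need $\inf_k h(k)>0$, which is false for general $\mu$ with exponential tail (gaps in the support of $\mu$ force $h(k)=0$ at some $k$). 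More seriously, two copies can only merge by being at age $0$ simultaneously, so coalescence of \emph{all} (infinitely many) initial ages requires a common reset time; the sentence ``each copy resets to $0$ --- after which it forgets its initial age --- within a time of exponential tail'' conflates a per-copy statement with the much stronger simultaneous one. A copy that resets at time $t$ does not thereby merge with a copy that reset at time $t'\neq t$; they must subsequently hit $0$ together, and with infinitely many copies using the same noise but at different phases this is exactly the delicate point. The ``renewal/large-deviations estimate'' you invoke would need, at minimum, a uniform-in-$a$ bound on the residual life $\P(L>a+N\mid L>a)$ and a uniform lower bound on the probability that an arbitrary finite collection of ages simultaneously resets in a bounded window --- neither of which follows directly from the exponential moment of $\mu$ alone.

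The approach is salvageable (e.g.\ via a genuine dominated-CFTP argument in the style of Kendall--M{\o}ller, or by working directly with the full chain rather than the age process), but as written step~(A) is a sketch of an intuition rather than a proof.
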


\begin{thm}[\cite{rudolph1982mixing,spinka2025newprooffinitaryisomorphism}]\label{thm:MC-fin-iso-iid}
    Let $M$ be a countable-state mixing Markov chain with exponential return times. Then $M$ is finitarily isomorphic to an IID process.
\end{thm}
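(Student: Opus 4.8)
The plan is to realize $M$ as a Kakutani tower over an i.i.d.\ base and then flatten the tower into an i.i.d.\ process in a finitary, entropy-exact way. Fix any state $a$ of the chain and consider a two-sided stationary copy of $M$; let $\cdots < \tau_{-1} < \tau_0 \le 0 < \tau_1 < \tau_2 < \cdots$ be the successive times at which $M$ visits $a$. By the strong Markov property the excursions $E_i$ --- the path of $M$ on $[\tau_i,\tau_{i+1})$ together with its length $L_i := \tau_{i+1}-\tau_i$ --- form an i.i.d.\ sequence $B := (E_i)_{i\in\Z}$, and by the hypothesis of the theorem together with \cref{lem:return-times-exp-tail} the lengths $L_i$ have exponential tails (so the particular choice of base state $a$ is immaterial). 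This yields a recoding $M \leftrightarrow (B,J)$, where $J\in\{0,1,2,\dots\}$ records the position of the origin within the current excursion and the dynamics are the standard ``tower over the shift on $B$ with roof $L$''. I would first check that this recoding is finitary in both directions: visits to $a$ are detected by a one-block code of $M$; recovering $(E_0,J_0)$ requires reading $M$ only on $[\tau_0,\tau_1)$, a window of size $\le L_0$ and hence finite with exponential tail; and $M_0$ is read off directly from $(E_0,J_0)$. Thus $M$ is finitarily isomorphic to the tower $(B,J)$, with coding radii having exponential tails.

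It then remains to show that a discrete-time Kakutani tower over an i.i.d.\ base with an i.i.d.\ roof function having exponential tails is finitarily isomorphic to an i.i.d.\ process. Here I would separate the cases $h(M)<\infty$ and $h(M)=\infty$; by Abramov's formula the base $B$ has entropy $h(M)\cdot\E[L_0]$ per step, and the target is an i.i.d.\ process of entropy $h(M)$. The construction I have in mind follows the marker--filler paradigm of Keane--Smorodinsky, adapted to unbounded roof heights: using the i.i.d.\ structure of $B$ and the exponential roof tail, place a sparse, finitarily detectable set of markers along the base, cutting the tower into consecutive finite blocks whose time-lengths are large and well concentrated (the exponential roof tail ensures every marker-detection radius has exponential tail); then recode the content of each block exactly and finitarily onto a block of the target process --- in the finite-entropy case by a finitary entropy-matching isomorphism between finite-alphabet Bernoulli words, together with a thin auxiliary i.i.d.\ stream that absorbs a bounded rounding discrepancy and carries it to the next block, and in the infinite-entropy case simply by exploiting the abundant room. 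Splicing the block recodings stationarily and composing with the isomorphism of the previous paragraph would give the theorem. (Alternatively, one could try to combine \cref{thm:MC-ffiid}, which already gives $M$ as a finitary factor of i.i.d.\ with exponential coding length, with a converse finitary factorization and an Ornstein-type finitary rigidity argument; but isolating such a rigidity statement is essentially as hard as the theorem itself, so I would not expect a genuine shortcut there.)

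The main obstacle is the block-recoding step: making the flattening of the tower simultaneously \emph{exact} (the image truly i.i.d., not merely Bernoulli-distributed in a limiting sense) and \emph{finitary with exponential tails}. The delicate points are the entropy bookkeeping across blocks when $h(M)<\infty$ --- each step of the tower carries entropy $h(M)$, so a block of random time-length carries a random amount of information that must be redistributed uniformly, with the leftover handled by the auxiliary stream --- and the robustness of the marker construction to blocks of random, unbounded height, which is exactly the point at which the exponential tail of the return times is indispensable. This is, in essence, the content of Rudolph's theorem in~\cite{rudolph1982mixing}, carried out more directly in~\cite{spinka2025newprooffinitaryisomorphism}.
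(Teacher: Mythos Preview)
The paper does not contain a proof of this theorem; it is quoted as a known result from~\cite{rudolph1982mixing,spinka2025newprooffinitaryisomorphism} and used as a black box. There is therefore nothing in the paper to compare your proposal against.

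That said, your sketch is a reasonable outline of the standard route: reduce to a tower over an i.i.d.\ base via the excursion decomposition at a fixed state (this part is unproblematic and the finitariness of the recoding $M \leftrightarrow (B,J)$ is clear), and then build a finitary isomorphism from the tower to an i.i.d.\ process using a marker--filler scheme. You correctly identify the real difficulty as the block-recoding step and its entropy bookkeeping, and you are honest that this is ``in essence the content of Rudolph's theorem.'' In other words, your proposal is not so much a proof as a roadmap pointing at the hard part and deferring it to the cited references --- which is exactly what the paper itself does. If you intend this as an actual proof rather than an expository sketch, the gap is that the marker--filler construction with exact i.i.d.\ output and finitary inverse is genuinely delicate and cannot be waved through in a paragraph; the details (choice of marker process, handling the random block lengths, the ``carry'' mechanism for entropy discrepancies, and verifying that the inverse map is finitary) are the substance of~\cite{rudolph1982mixing} and~\cite{spinka2025newprooffinitaryisomorphism}.
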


\cref{thm:MC-fin-iso-iid} is originally due to Rudolph~\cite{rudolph1982mixing}.
In \cite{spinka2025newprooffinitaryisomorphism}, we gave a shorter and more direct proof.

\bibliographystyle{plain}
\bibliography{library}

\end{document}